    \numberwithin{equation}{section}
\newcommand{\e}{ \mathbb{E}}            
\newcommand{\p}{ \mathbb{P}}            
\newcommand{\R}{\mathbb{R}}            
\newcommand{\N}{\mathbb{N}}            
\newcommand{\F}{{\cal F}}         
\newcommand{\la}{\lambda}               
\newcommand{\Om}{\Omega}
\newcommand{\ud}{\mathrm{d}}
\DeclareMathOperator*{\grad}{grad}
\bmdefine\xib{\mathbf{\xi}}
\bmdefine\etab{\mathbf{\eta}}
\theoremstyle{plain}
\newtheorem{thm}{Theorem}[section]
\newtheorem{lem}[thm]{Lemma}
\newtheorem{prop}[thm]{Proposition}
\theoremstyle{definition}
\newtheorem{defn}{Definition}[section]
\theoremstyle{remark}
\newtheorem{rem}{Remark}[section]
\definecolor{jan}{rgb}{0.0,0.3,0.8}
\definecolor{mat}{rgb}{0.0,0.5,0.3}
\newcommand{\ip}[1]{\langle {#1}\rangle}
\newcommand{\bip}[1]{\big\langle {#1}\big\rangle}
\newcommand{\Bip}[1]{\Big\langle {#1}\Big\rangle}
\newcommand{\bH}{\mathbb{H}}
\newcommand{\bL}{\mathbb{L}}
\newcommand{\cK}{\mathcal{K}}
\newcommand{\cL}{\mathcal{L}}
\newcommand{\cM}{\mathcal{M}}
\newcommand{\cP}{\mathcal{P}}
\newcommand{\cS}{\mathcal{S}}
\newcommand{\cX}{\mathcal{X}}
\newcommand{\cZ}{\mathcal{Z}}
\title{   \textsc{Trajectorial Dissipation  and Gradient Flow for the  Relative Entropy   in  Markov Chains}
\thanks{\,We are    indebted to Ioannis  Kontoyiannis,   Peter Michor,       Abishek  Tilva  and Lane  Yeung   for sharing their expertise on the subject,   for bringing relevant literature to our attention, and for their many helpful comments. 
 I.K.\,acknowledges support from the U.S.\,National Science Foundation under Grant NSF-DMS-20-04997. 
 J.M.\,acknowledges support from the European Research Council (ERC) under the European Union's Horizon 2020 research and innovation programme (grant agreement No 716117) and from the Austrian Science Fund (FWF) through project F65.  W.S.\,acknowledges support from the Austrian Science Fund (FWF) under grant P28861 and by the Vienna Science and Technology Fund (WWTF) through projects MA14-008 and MA16-021.  }
}
\author{  
\textsc{Ioannis Karatzas}     \thanks{\, 
Department of Mathematics,  Columbia University, 2990 Broadway, New York, NY\,10027, USA ({\it ik1@columbia.edu}),    and       \textsc{Intech} Investment Management,  One Palmer Square, Suite 441, Princeton, NJ 08542, USA    ({\it ikaratzas@intechjanus.com}). 
} 
\and
\textsc{Jan Maas}                 \thanks{\, Institute of Science and Technology 
(IST) Austria, Am Campus 1, 3400 Klosterneuburg, Austria (email: {\it jan.maas@ist.ac.at}).
}
  \and
\textsc{Walter Schachermayer}          
\thanks{\, 
Faculty of Mathematics, University of Vienna, Oskar-Morgenstern-Platz 1, 1090 Vienna, Austria \newline $~~~~~~~~ $ (email: {\it walter.schachermayer@univie.ac.at}).
               }
                                      }
\begin{document}

\maketitle

\begin{abstract}

\noindent 
\small We study the temporal dissipation of    variance and     relative entropy    for   ergodic  
 Markov  Chains in continuous time, and compute  explicitly the corresponding  dissipation rates. These are   identified, as is well known,  in the case of the variance in terms of an appropriate Hilbertian norm; and in the case of the relative entropy,  in terms of a   Dirichlet form   which morphs into a version of   the familiar  Fisher  information     under conditions of  detailed  balance. Here we obtain trajectorial versions of these  results,     valid along almost every path of the random motion and  most transparent in the backwards direction of time. Martingale arguments and time reversal play crucial roles, as   in the recent work of Karatzas, Schachermayer and Tschiderer  for conservative diffusions. Extensions   are    developed to general ``convex divergences"  and   to countable state-spaces. The  steepest descent and gradient flow properties for the variance, the relative entropy, and appropriate generalizations, are studied   along with their respective geometries 
 under conditions of   detailed balance,  leading to a very direct proof for the HWI inequality of Otto and Villani in the present context.    
\end{abstract}

\noindent{\it Keywords and Phrases:}  Markov Chain;  Relative Entropy; Time Reversal; Steepest Descent; Gradient Flow.

\noindent{\it AMS 2020 Subject Classifications:}    60J27; 60H10; 60G44; 46C05.

 \input amssym.def
\input amssym

 \medskip



 \section{Introduction and Summary}
 \label{sec0}

We present a trajectorial approach to the temporal dissipation of   variance and   relative entropy,  in the context of    ergodic  \textsc{Markov} Chains in continuous time. We follow the methodology   of the recent work by \textsc{Karatzas, Schachermayer \& Tschiderer} (2020),  which is based on stochastic calculus and uses  time-reversal in a critical fashion. By  aggregating   the trajectorial results, i.e., by averaging them  with respect to the invariant measure, we obtain a very crisp, geometric picture      of  the steepest descent property  for the  curve of   time-marginals, relative to local perturbations.  This holds for an appropriate, locally flat  metric on configuration space,  defined   in terms of a suitable discrete \textsc{Sobolev} norm.

We adopt then  a more  global  approach,  and establish also the  gradient flow  property---to the effect that the temporal  evolution    for the curve of the Chain's time-marginals is prescribed by  an appropriate  Riemannian metric  on the manifold of probability measures on configuration space, and by the differential of the relative entropy functional along this curve; cf.\,\textsc{Maas} (2011), \textsc{Mielke} (2011), \textsc{Erbar \& Maas} (2012, 2014). Both  steepest descent and gradient flow are manifestations of the seminal \textsc{Jordan,   Kinderlehrer  \& Otto}\,(1998) results  and of their  outgrowth, the so-called ``\textsc{Otto}   Calculus" initiated in \textsc{Otto} (2001).

 \smallskip
\noindent
{\it Preview:} For a    finite state-space, we set up the probabilistic framework in Section \ref{sec1} and the functional-analytic one in Section \ref{sec3}. The appropriate stochastic-analytic machinery and results appear in Sections \ref{sec2} and \ref{sec4}. Temporal dissipation and steepest descent are   developed in increasing generality:  First in Section \ref{sec5} for the variance and its associated, globally determined and flat, metric; then in Section \ref{sec6} for the \textsc{Boltzmann-Gibbs-Shannon} relative entropy; and finally in Section \ref{sec7} for general entropies induced by convex functions. Gradient flows and their associated   geometries  are taken up in Section \ref{sec8}, culminating with a very direct proof of   a discrete version of   the celebrated HWI inequality of \textsc{Otto \& Villani} (2000). Some extensions to  state-spaces with a countable infinity of elements are developed in Section \ref{sec9}.

\section{The Setting}
\label{sec1}

On a   probability space $  ( \Om,   \F, \p) , $ we start with an irreducible, positive recurrent, discrete-time \textsc{Markov} Chain $\, {\cal Z}=    (Z_n  )_{n \in \N_0}\,$ with  state-space $\,{\cal S} ,$  transition probability matrix $ \,\Pi  =  ( \pi_{xy}  )_{(x,y) \in {\cal S}^2}\, $  with entries $  \pi_{xy} = \p (Z_{n+1} =y \,|\, Z_n =x)\, $ for  $\,n \in \N_0 ,$ and  initial distribution  $\,P(0) =  ( p (0, x)  )_{x \in {\cal S}}\,$ which is a column vector with components $\,p (0,x) : = \p (Z_0 =x) >0\,$ for all $\,x \in {\cal S}.$ 
 Throughout Sections \ref{sec1}--\ref{sec8}, the state-space $\,{\cal S}\,$ is     assumed to be   {\it finite;}  extensions to  countable state-spaces  are taken up in  Section \ref{sec9}. 

\smallskip
It is  straightforward to check that the  sequence of random variables $\,\big(M^f_n \big)_{n \in \N_0}\,$ with  $\, M^f_0 :=f(Z_0)\,,$  
\begin{equation} 
\label{A1}
M^f_n \,:=\, f(Z_n)   - \sum_{k=0}^{n-1} \big( \Pi f - f \big) (Z_k)\,, \qquad n \in \N\,,
\end{equation} 
is a martingale of the filtration generated by the \textsc{Markov} Chain $ {\cal Z}$,  for any given function $\, f : {\cal S} \to \R\,.$ \,Here and in what follows, we denote $\,   ( \Pi f   ) (z)  :=  \sum_{y \in {\cal S}} \, \pi_{zy} \, f(y) ,    ~ z \in {\cal S}$.

It is well known that such a Chain has a unique    invariant distribution:  that is, a  column    vector  $\, Q = \big( q (y) \big)_{y \in {\cal S}}\,$ of {\it positive} numbers adding up to 1  and satisfying  $\, \Pi^\prime  Q   = Q\,$ or, more explicitly, 
\begin{equation} 
\label{A18}
q (y) \,=\, \sum_{z \in {\cal S}} \, q(z) \, \pi_{zy} \,,    \qquad \forall ~~y \in {\cal S}\,.
\end{equation}
 Here and throughout this paper, prime $\,^\prime\,$ denotes transposition of a matrix or vector.  A major result of discrete-time \textsc{Markov} Chain theory states that, when $ {\cal Z}$ is also aperiodic,   the $k-$step transition probabilities 
\begin{equation} 
\label{A2a}
\pi^{(0)}_{xy} := \mathbf{ 1}_{x=y}\,, \qquad \pi^{(k)}_{xy} := \p \big( Z_k =y \, \big|\, Z_0 =x\big)\,, \quad k \in \N
\end{equation}
converge   as $k$ tends to infinity to $ \,q(y) $, for every pair of states $\, (x,y) \in {\cal S}^2$. We  refer to Chapter 1 in \textsc{Norris} (1997), in particular Theorems 1.7.7 and 1.8.3, for an excellent account of the relevant theory.

\subsection{From Discrete- to Continuous-Time  \textsc{Markov} Chains, via \textsc{Poisson}}

  Consider now on the same probability space   a \textsc{Poisson} process $\, {\cal N}=  \big(N(t)  \big)_{0 \le t < \infty}\,$ with parameter $\, \la =1\,$ and {\it independent} of the discrete-time \textsc{Markov}  Chain $\, {\cal Z}.$  We construct via time-change  the continuous-time  process
\begin{equation} 
\label{A3}
X(t) \,:=\, Z_{N(t)}\,, \qquad 0 \le t < \infty \,,
\end{equation}
as well as the filtration $\,\mathbb{F}^X = \big\{ {\cal F}^X (t) \big\}_{0 \le t < \infty}\,$  this process generates via $\, {\cal F}^X (t) := \sigma \big( X(s), \, 0 \le s \le t \big) .$ 
Straightforward computation shows that this new, continuous-time process $\, {\cal X}=  \big(X(t)  \big)_{0 \le t < \infty}\,$ has the \textsc{Markov} property, and time-homogeneous transition probabilities 
\begin{equation} 
\label{A2}
\varrho_h (x,y) \, :=\,    \p \big( X(t+h) =y \, \big|\, X(t )=x\big) = e^{-h} \sum_{k \in \N_0} \frac{\, h^k\,}{k!}\, \pi^{(k)}_{xy}\,, \qquad t \ge 0, ~ h > 0
\end{equation}
with the notation of (\ref{A2a}); we set  $\, \varrho_0 (x,y) := \mathbf{ 1}_{x=y}\,$. The   functions $\, h \mapsto \varrho_h (x,y) \,$ in (\ref{A2}) are uniformly continuous and continuously differentiable; cf. Theorems 2.13, 2.14 in \textsc{Liggett} (2010).

\smallskip
More generally, for arbitrary  $\, n \in \N$, $\, 0 < \theta_1 < \cdots < \theta_n = \theta < t < \infty\,$, $(x, y_1, \cdots, y_n, z) \in {\cal S}^{n+2}\,$ with $\, y = y_n\,,$   the finite-dimensional distributions of this process are 
\begin{equation} 
\label{A5}
\p \big( X(0 )=x , X(\theta_1) = y_1, \cdots , X(\theta_n)= y_n, X(t) =z   \big) \,=~~~~~~~~~~~~~~~~~~~~
\end{equation}
$$ 
~~~~~~~~~~~~~~~~~~~~~~~~\,= \,p (0,x)\, \varrho_{\theta_1} (x,y_1)\, \varrho_{\theta_2 - \theta_1} (y_1,y_2)\cdots \varrho_{\theta_n - \theta_{n-1}} (y_{n-1},y_n) \cdot \varrho_{t - \theta}  (y ,z)
$$
and we deduce  the time-homogeneous \textsc{Markov} property
\begin{equation} 
\label{A5a}
\p \big(  X(t) =z \, \big|\, {\cal F}^X (\theta )   \big)=\varrho_{t - \theta}  \big(X(\theta) ,z \big)=\p \big(  X(t) =z \, \big|\, X (\theta )   \big) .
\end{equation}

Finally, from the \textsc{Chapman-Kolmogorov} equations $\,\pi^{(m+n)}_{xy}=\sum_{z \in {\cal S}} \, \pi^{(m)}_{xz} \pi^{(n)}_{zy} \,$ for the $k-$step transition probabilities of $  \,{\cal Z} \, $ in (\ref{A2a}), we deduce these same equations   for the quantities in (\ref{A2}):  
\begin{equation} 
\label{A8}
\varrho_{t+\theta} (x,y) \,  =\,     \sum_{z \in {\cal S}} \,  \varrho_{ \theta} (x,z) \,  \varrho_{t } (z,y) \, , \qquad (\theta, t) \in [0, \infty)^2, ~~ (x,y) \in {\cal S}^2
\end{equation}
 Here we think of the temporal argument $\theta$ as the ``backward  variable", and of $t$ as the ``forward  variable".

\subsection{Infinitesimal  Generators  and Martingales}

We introduce now the matrix
\begin{equation} 
\label{A10}
{\cal K} := \,\Pi -  \mathrm{I}\, = \big\{ \kappa (x,y) \big\}_{(x,y) \in {\cal S}^2} \qquad \text{with elements} \qquad \kappa (x,y) := \pi_{xy}- \mathbf{ 1}_{x=y}\,:
\end{equation}
non-negative off the diagonal, adding up to zero across each row. From (\ref{A2}) and with the help of time-homogeneity, we obtain  for $\, t \ge 0\,,$ $\, h >0\,$   the infinitesimal ``transition rates" 
\begin{equation} 
\label{A12}
\p \big(  X(t+h) =y \, \big|\, X (t)  =x \big)\,=\, h \cdot \kappa (x,y) + o (h)\,, \qquad x \neq y\,,
\end{equation}
\begin{equation} 
\label{A12'}
\p \big(  X(t+h) =x \, \big|\, X (t)  =x \big)\,=\, 1+ h \cdot \kappa (x,x) + o (h) 
\end{equation}
 \noindent
with the standard  convention $\, \lim_{h \downarrow 0 } \big( o (h) / h \big) =0,$ valid uniformly over $t \in [0, \infty).$ In particular, (\ref{A12})  and (\ref{A12'})   give  the infinitesimals $\,  \varrho_{h} (x,y) - \varrho_{0} (x,y) = h \cdot \kappa (x,y) + o (h)\,$ for all $\,(x,y) \in {\cal S}^2\,$, and thus 
\begin{equation} 
\label{A9}
 \partial   \varrho_h (x,y) \, \big|_{h=0} \,=\, \kappa (x,y)\,.
\end{equation}
{\it Here and throughout the paper,   $\, \partial   g  \,$ denotes   partial differentiation  of a function $g$ 
with respect to its temporal argument.}

  A bit more generally, for any   $  f : {\cal S} \to \R $ we have from (\ref{A12}), (\ref{A12'})  the semigroup computation
\begin{equation} 
\label{A12''}
\big( T_h f \big) (x)  \,:=\,
\e \big[ f \big( X(t+h)) \, \big|\, X(t)=x \big]    \,=\, f(x)+ h \cdot \big( {\cal K} f \big) (x) + o (h) \,.
\end{equation}
We deploy, here and in what follows, the {\it infinitesimal generator} of the Chain, i.e., the linear operator
\begin{equation} 
\label{A26}
\big( {\cal K} f \big) (x)   := \big( \Pi f \big) (x)- f(x) =  \sum_{y \in {\cal S}} \, \kappa (x,y)\, f(y)= \sum_{y \in {\cal S}} \, \kappa (x,y)\, \big[ f(y) - f (x) \big]  \,, \quad x \in {\cal S}  \,.
\end{equation}
\newpage
\noindent
Using the computation \eqref{A12''}, it is shown fairly easily that the exact analogue of the random sequence    (\ref{A1}) in our present setting, namely, the  process 
\begin{equation} 
\label{A25}
f \big( X(t) \big) - \int_0^t \big( {\cal K} f \big) \big(  X (\theta) \big) \, \ud \theta \,, \qquad 0 \le t < \infty\,,
\end{equation}
 is an $\,\mathbb{F}^X-$martingale; cf.\,Theorem 3.32 in \textsc{Liggett}  (2010).    As a slight   generalization, we obtain   also the following result (Lemma IV.20.12 in \textsc{Rogers  \& Williams}  (1987)).  
 
 \begin{prop}
 \label{Liggett}
Given any function $\, g : [0, \infty) \times {\cal S} \to \R\,$ whose  temporal derivative  $ \,  t   \mapsto   \partial g (t, x)\, $  is continuous for every state $\, x \in {\cal S} ,$ the process below   is a local $\,\mathbb{F}^X-$martingale:
 \begin{equation} 
\label{A27}
M^g  (t)  \,:= \, g \big( t,X(t) \big) - \int_0^t \big( \partial g + {\cal K} g \big) \big(\theta, X (\theta) \big)\, \ud \theta \,, \qquad 0 \le t < \infty \,.
\end{equation}
 \end{prop}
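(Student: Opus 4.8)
The plan is to reduce the claim to the martingale property of the process in \eqref{A25}, already recorded above, exploiting the finiteness of $\mathcal{S}$. Because $\mathcal{S}$ is finite, any $g$ as in the statement decomposes as the finite sum $g(t,x)=\sum_{y\in\mathcal{S}}g(t,y)\,\mathbf{1}_{x=y}$, and the process $M^{g}$ in \eqref{A27} is linear in $g$; hence it suffices to treat a single product term $g(t,x)=\varphi(t)f(x)$, with $f:\mathcal{S}\to\mathbb{R}$ and $\varphi:[0,\infty)\to\mathbb{R}$ of class $C^{1}$. (The hypothesis that $t\mapsto\partial g(t,x)$ is continuous for each $x$ means precisely that each coordinate map $g(\cdot,x)$ is continuously differentiable, which is what makes the relevant $\varphi$'s admissible.) For such a $g$ one has $\partial g(t,x)=\varphi'(t)f(x)$ and $(\mathcal{K}g)(t,x)=\varphi(t)(\mathcal{K}f)(x)$.

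For such a product $g$, write $f(X(t))=M^{f}(t)+A^{f}(t)$, where $M^{f}$ is the $\mathbb{F}^{X}$-martingale of \eqref{A25} and $A^{f}(t):=\int_{0}^{t}(\mathcal{K}f)(X(\theta))\,\ud\theta$ is its continuous compensator of locally bounded variation. The integration-by-parts rule for the semimartingale product $\varphi(t)f(X(t))=\varphi(t)M^{f}(t)+\varphi(t)A^{f}(t)$ — with no covariation term, since $\varphi$ is continuous and of bounded variation — yields
$$\varphi(t)f(X(t))\;=\;\int_{0}^{t}\varphi(\theta)\,\ud M^{f}(\theta)\;+\;\int_{0}^{t}\big(M^{f}(\theta)+A^{f}(\theta)\big)\varphi'(\theta)\,\ud\theta\;+\;\int_{0}^{t}\varphi(\theta)\,(\mathcal{K}f)(X(\theta))\,\ud\theta.$$
Since $M^{f}(\theta)+A^{f}(\theta)=f(X(\theta))$, the two Lebesgue integrals combine into exactly $\int_{0}^{t}\big(\partial g+\mathcal{K}g\big)(\theta,X(\theta))\,\ud\theta$, while $\int_{0}^{\cdot}\varphi(\theta)\,\ud M^{f}(\theta)$ is a local martingale (a stochastic integral of a continuous, locally bounded integrand against a martingale). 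This identifies that local martingale with $M^{g}$ of \eqref{A27}; summing the resulting identities over $y\in\mathcal{S}$ proves the proposition.

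The only point needing a word of care is the legitimacy of the product rule, i.e.\ that $f(X(\cdot))$ really is a semimartingale with the displayed decomposition — which is precisely the content of \eqref{A25}. Since $\mathcal{S}$ is finite, on every bounded interval $[0,T]$ the process $f(X(\cdot))$ is bounded and $A^{f}$ has bounded variation, so $M^{f}$ is a true (square-integrable) martingale there and $\int_{0}^{\cdot}\varphi\,\ud M^{f}$ is a true martingale on $[0,T]$ as well; thus the ``local'' in the statement is only a harmless overcaution. A reader preferring to bypass stochastic calculus can instead fix $0\le s\le t$, reduce by the Markov property to verifying $\mathbb{E}_{x}\big[g(t,X(t))\big]-g(s,x)=\int_{s}^{t}\mathbb{E}_{x}\big[(\partial g+\mathcal{K}g)(\theta,X(\theta))\big]\,\ud\theta$, insert a partition $s=t_{0}<\cdots<t_{n}=t$, telescope $g(t,X(t))-g(s,X(s))$ into its successive increments, split each increment into a temporal part $\int_{t_{i}}^{t_{i+1}}\partial g(\theta,X(t_{i}))\,\ud\theta$ and a spatial part whose $\mathcal{F}^{X}(t_{i})$-conditional expectation is computed from \eqref{A25} applied to the frozen function $g(t_{i+1},\cdot)$, and finally let the mesh shrink to $0$, invoking continuity of $\theta\mapsto\partial g(\theta,x)$ and of $\theta\mapsto(\mathcal{K}g)(\theta,x)$ together with dominated convergence (all quantities being bounded on $[0,t]$).
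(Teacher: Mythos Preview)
The paper does not supply its own proof of this proposition; it simply records the result as a slight generalization of the martingale property \eqref{A25} and cites Lemma~IV.20.12 in \textsc{Rogers \& Williams}~(1987). So there is no in-paper argument to compare against.

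Your argument is correct in substance. The reduction to product functions $g(t,x)=\varphi(t)f(x)$ via the finite decomposition $g(t,x)=\sum_{y\in\mathcal{S}} g(t,y)\mathbf{1}_{x=y}$ is legitimate, and the integration-by-parts computation for $\varphi(t)f(X(t))$ is the right idea. One small bookkeeping slip: your displayed identity for $\varphi(t)f(X(t))$ is missing the initial value $\varphi(0)f(X(0))$ on the right-hand side. The correct identity reads
\[
\varphi(t)f(X(t))=\varphi(0)f(X(0))+\int_{0}^{t}\varphi(\theta)\,\ud M^{f}(\theta)+\int_{0}^{t}f(X(\theta))\,\varphi'(\theta)\,\ud\theta+\int_{0}^{t}\varphi(\theta)\,(\mathcal{K}f)(X(\theta))\,\ud\theta,
\]
so that $M^{g}(t)=\varphi(0)f(X(0))+\int_{0}^{t}\varphi(\theta)\,\ud M^{f}(\theta)$. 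Since $\varphi(0)f(X(0))$ is $\mathcal{F}^{X}(0)$-measurable, this does not affect the (local) martingale conclusion; and as you note, on a finite state space the integrand is bounded on compacts, so one actually gets a true martingale. Your alternative telescoping sketch at the end is also a valid route.
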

 
 \begin{rem}
 \label{1.1}
{\it The General Case:}  Instead of starting with    transition probabilities $\pi_{x y}$ and defining   $ \kappa (x,y)  = \pi_{xy}- \mathbf{ 1}_{x=y}\,$ as in (\ref{A10}), one can  work instead with {\it any  transition rates} $\kappa(x,y)$ satisfying: $(i)$ $\kappa(x,y) \geq 0\, $ for $ \,x \neq y\,$; and $(i)$ $\sum_{\,y \in {\cal S}} \kappa(x,y) = 0\,$ for every  $x \in {\cal S}$. In this   manner, arbitrary  irreducible  continuous-time \textsc{Markov} chains on finite state spaces can be constructed, and studied with little extra effort.   We have opted here for the  somewhat less general, but  very concrete and intuitive, approach of the present Section. 
\end{rem}

\section{Forward and Backward \textsc{Kolmogorov} Equations}
\label{sec2}

 Let us differentiate both sides of  the equations in  (\ref{A8}) with respect to the  backward  variable $\theta$, then set  $\theta =0$. We obtain on account of (\ref{A9}) the {\it Backward  \textsc{Kolmogorov} differential equations} 
\begin{equation} 
\label{A13}
 \partial  \varrho_t (x,y) \,=\, \sum_{z \in {\cal S}} \, \kappa (x, z) \, \varrho_t (z,y).
\end{equation}
 We can write this system of equations, for the matrix-valued function 
 $\, t \mapsto 
 {\cal P}_t = \big( \varrho_t (x,y) \big)_{(x,y) \in {\cal S}^2}\, 
 $  of the forward variable $\, t \in [0, \infty)\,,$   in the   form $\,
 \partial \, {\cal P}_t\,=\, {\cal K } \,{\cal P}_t\,, \,~{\cal P}_0 = \mathrm{I}\,.$

 In a similar manner, differentiating formally the equations (\ref{A8}) with respect to the  forward  variable $t$, then evaluating at $t=0$ and recalling the {\it transpose} 
 \begin{equation} 
\label{A23a}
 {\cal K}^\prime   \,:=\, \big(  \kappa^\prime (   y,z) \big)_{( y,z) \in {\cal S}^2}\,, \qquad  \kappa^\prime (   y,z) \,:=\,   \kappa (z,y) 
\end{equation} 
of the $\,{\cal K}-$matrix, we obtain the {\it Forward  \textsc{Kolmogorov} equations}
 \begin{equation} 
\label{A14}
 \partial \varrho_\theta (x,y) \,=\, \sum_{z \in {\cal S}} \, \varrho_\theta (x,z)\, \kappa (  z, y) \,=\, \sum_{z \in {\cal S}} \, \kappa^\prime (    y,z) \, \varrho_\theta (x,z)  \, ,\qquad  \text{or} \qquad   
 \partial \,{\cal P}_\theta\,=\, {\cal K }^\prime {\cal P}_\theta \,, \quad {\cal P}_0 = \mathrm{I} .
\end{equation}

\subsection{A Curve of Probability Vectors}
\label{sec2.1}

For every $\,t > 0$, let us consider the  column   vector $\,P(t) = \big( p (t, y) \big)_{y \in {\cal S}}\,$ of probabilities for the $\p-$distribution
 \begin{equation} 
\label{A19}
 p(t,y) := \p \big( X(t) = y\big) = e^{-t} \sum_{x \in {\cal S}} \,p(0,x) \sum_{k \in \N_0} \frac{\, t^k\,}{k!}\, \pi^{(k)}_{xy}\,>\,0 
\end{equation}
  of the random variable $X(t)$. The forward  \textsc{Kolmogorov} equations of (\ref{A14}), the law of total probability, and the \textsc{Markov} property, show that these   satisfy their own   forward  \textsc{Kolmogorov}  equations, namely 
\begin{equation} 
\label{A20}
 \partial  p (t,y) \,=\, \sum_{z \in {\cal S}}\, p (t,z) \, \kappa  (     z,y) \,=\, \sum_{z \in {\cal S}} \, \kappa^\prime (    y,z)\, p (t,z)   \,=: \, \big( {\cal K}^\prime p \big) (t,y)\,;
\end{equation}
or, more compactly and in matrix form, $\, \partial P(t)= {\cal K}^\prime P(t)  \,,~ ~0 \le t < \infty\, $ in the notation of (\ref{A23a}). {\it We shall think of  $   ( P (t)  )_{0 \le t < \infty}\,$ as a curve on the manifold $\, {\cal M  } = {\cal P}_+ ({\cal S})\,,$ of  vectors $\, P =  ( p (x)   )_{x \in {\cal S}}\,$ 
with strictly positive elements and total mass $\, \sum_{x \in {\cal S}} p (x)=1, $     viewed as  probability measures and governed by (\ref{A20}).}

\smallskip
Suppose that the initial distribution $P(0)$ of the discrete-time \textsc{Markov} Chain $\,{\cal Z}$ coincides with the column   vector  $  \,Q = \big( q (y) \big)_{y \in {\cal S}}\,$ of (\ref{A18})  satisfying  $  \,\Pi^\prime Q = Q\,,$   or equivalently $\,  {\cal K}^\prime Q=0 $ on account of (\ref{A10}). It follows  that $\, P(t) \equiv  Q\,,$ $\, \forall  \, t \in [0, \infty)\,$  provides  now the   solution of   (\ref{A20}): the distribution $Q$ is invariant also for the continuous-time \textsc{Markov} Chain $\,{\cal X}$ in (\ref{A3}). 

A bit more generally, $\,Q\,$ is the {\it equilibrium distribution} of $\,{\cal X}$, in the sense that for every    initial distribution $\,P(0)= \big( p (0, x) \big)_{x \in {\cal S}} \,  $ and  function $\, f : {\cal S} \to \R\,$  we have the limiting behavior 
\begin{equation} 
\label{A20a}
 \lim_{t \to \infty} p(t,y)\, = \,q(y)\,,\qquad \forall ~~ y \in {\cal S} ,
 \end{equation}
\begin{equation} 
\label{A20aa}
  \lim_{T \to \infty} \frac{1}{T} \int_0^T f \big( X(t) \big) \, \ud t \,=\, \sum_{y \in {\cal S}} \, q(y)\, f(y)\,, \quad \p-\text{a.e.;}
   \end{equation}
 see Sections 3.6\,--3.8 in \textsc{Norris} (1997) for an account of these results. In the present,  continuous-time context, aperiodicity plays no role.

\subsection{A Curve of Likelihood Ratios}
\label{sec2.2}

Let us compare now the components of the probability vector $P(t)$ in (\ref{A19}), with those of  the invariant probability vector $\,Q\,$ in (\ref{A18}). One way to do this, very fruitful in the present context, is  by considering the likelihood ratio column   vector 
 \begin{equation} 
\label{A21}
{\bm \ell }_t \equiv  {\bm \ell }  (t)=  \big( \ell (t, y) \big)_{y \in {\cal S}} \qquad \text{with components} \qquad  \ell(t,y)\,  := \frac{\,p(t,y)\,}{q(y)} \,.
\end{equation}
Substituting the product $\, p (t,y) = \ell (t,y) \,q (y)\,$ into the   forward  \textsc{Kolmogorov} equation (\ref{A20}), we obtain for the likelihood ratios of (\ref{A21})  the {\it Backward Equation}  
\begin{equation} 
\label{A22}
 \partial \ell (t,y) \,=\, \sum_{z \in {\cal S}}\, \widehat{\kappa} (   y,z) \, \ell (t,z)  \,=\, \sum_{z \in {\cal S}}\, \widehat{\kappa} (   y,z) \, \big[ \ell (t,z) - \ell (t,y) \big]   \,=: \, \big(  \widehat{{\cal K}} \, {\bm \ell }  \big) (t,y)\,,
\end{equation}
or equivalently $\,   \partial   {\bm \ell }  (t ) =   \widehat{{\cal K}} \, {\bm \ell } (t)\, $  in matrix form, with the new transition rates
\begin{equation} 
\label{A23}
  \widehat{{\cal K}}   \,:=\, \Big( \widehat{\kappa} (   y,z) \Big)_{( y,z) \in {\cal S}^2}\,, \qquad \widehat{\kappa} (   y,z) \,:=\, \frac{\, q(z)}{q(y)} \, \kappa (z,y)\,.
\end{equation}
The entries of this matrix $  \widehat{{\cal K}} $ are non-negative off the diagonal, and add up to zero  $\, \sum_{z \in {\cal S}} \widehat{\kappa} (   y,z)=0\,$ across every row $y \in {\cal S} ,$ on account of (\ref{A18}), (\ref{A10}).

{\it We shall think   of $\, ( {\bm \ell }  (t)  )_{0 \le t < \infty}\,$ as a curve, now  in the space ${\cal L = L_+ (S)}$ of    vectors $\, \Lambda =  ( \lambda (x)  )_{x \in {\cal S}}\,$ 
with strictly positive elements and   $\, \sum_{x \in {\cal S}} q (x)\,\lambda (x)=1.$}  These are  viewed as  likelihood ratios with respect to the invariant distribution and evolving in time via   (\ref{A22}).

 \smallskip
Presently, we shall identify $\,  \widehat{{\cal K}}\,$ of (\ref{A23}) with the infinitesimal generator of a suitable continuous-time \textsc{Markov} Chain, run {\it backwards} in time. A   special case, however, is worth mentioning already.

\begin{defn} {\bf Detailed Balance:} 
\label{DB}
The invariant distribution $\,Q\,$ in (\ref{A18}) is said to satisfy the {\it detailed-balance} conditions, if 
\begin{equation} 
\label{A24}
q(y) \, \kappa (y,z) \,=\, q(z) \, \kappa ( z,y) \, ,  \quad \forall ~ (y,z) \in {\cal S}^2.
\end{equation}
\end{defn}

 This   requirement turns out to be equivalent to the identity $\,q(y) \, \varrho_t (y,z)= q(z) \,  \varrho_t ( z,y)$   for all $ t \in (0, \infty), ~(y,z) \in {\cal S}^2 \,;$
 one leg of the equivalence is immediate, courtesy of  (\ref{A9}). When (\ref{A24}) prevails,   $\,   \widehat{{\cal K}} \equiv    {\cal K}\,$ holds in (\ref{A23}); and the backward equation (\ref{A22}) for the   likelihood ratios  $\,  ( \ell_t (x) )_{x \in {\cal S}}\,$  of (\ref{A21}), 
 is  then \\
  \newpage
 \noindent
    exactly the same as  the backward equation (\ref{A13})   for $\,  ( \varrho_t ( x, y) )_{x \in {\cal S}}\,$.    
 We stress that, whenever the   detailed-balance conditions  (\ref{A24}) are needed in the sequel,  they will be invoked explicitly.

\section{Discrete Gradient and Divergence; \textsc{Dirichlet} Form,   \textsc{Hilbert} Norms}
\label{sec3}

It is apt  at this point  to introduce some necessary  notation   and functional-analytic notions.     For a given function $\,f : {\cal S} \to \R\,$ we  consider the {\it discrete gradient} $\,\nabla f: {\cal S}^2 \to \R\,$ given  by  
 \begin{equation} 
\label{D1}
\nabla f (x,y): = f(y) - f(x)\,. 
\end{equation}
In a similar spirit, we consider   the {\it discrete divergence} 
 \begin{equation} 
\label{D2}
\big(\nabla \cdot F\big) (x ) \,: =\, \frac{\,1\,}{2} \sum_{y \in {\cal S}, \, y \neq x}   \kappa (x,y)\,\big[ F(x,y) - F(y,x)  \big]  
\end{equation}
of a function $\, F : {\cal S} \times {\cal S} \to \R\,,$ and note the   familiar {\it concatenation}   formula
  \begin{equation} 
\label{D3}
{\cal K} f = \nabla \cdot \big( \nabla f \big)  
\end{equation}
which allows us to think of the operator ${\cal K}$ in (\ref{A26}) also as a ``discrete Laplacian". We   introduce also the set 
$\, \cZ := \{(x,y) \in \cS \times \cS   \ : \   \kappa(x,y) > 0\}\,$ consisting of all edges in the incidence graph associated with the \textsc{Markov} chain, and the measure $\,C$ on $\,\cZ$ defined by the ``conductances"
 \begin{equation} 
\label{C_Meas}
	C\{(x,y)\} \equiv c(x,y) := \frac12 \,\kappa (x,y)\,q(x) \,, \qquad (x,y) \in \cZ.
\end{equation}
With these ingredients, we consider the bilinear forms  
\begin{equation} 
\label{D4}
  \bip{    f ,  g }_{\bL^2(\cS, Q)} \,:=\, \sum_{x \in {\cal S}} \, q(x)\,f(x) \,g(x)\,, 
  	\qquad  
	\bip{    F ,  G }_{\bL^2(\cZ, C)} \,:= \,
	\sum_{(x,y) \in {\cZ} } \, c(x,y) \,F(x,y)\, G(x,y) 
\end{equation} 
for real-valued functions defined on ${\cal S}$ (lowercase $f,\,g$) and on ${\cal S} \times {\cal S} $ (uppercase $F,\,G$), respectively. They induce the $\bL^2-$norms $ \big\| f \big\|_{\bL^2(\cS, Q)} $ (relative to the probability measure $Q$) and    $ \big \| F \big \|_{\bL^2(\cZ, C)} $ (relative to the unnormalized measure $C$ on $\cZ$ in (\ref{C_Meas})), given respectively via  
\begin{equation}
\begin{aligned}
\label{D4a}
	\big\| f \big\|_{\bL^2(\cS, Q)}^2\, 
	&:= \bip{ f ,  f }_{\bL^2(\cS, Q)} 
	 = \sum_{x \in {\cal S}} \, q(x)\,f^2(x)\,, \\    
	\big \| F \big \|_{\bL^2(\cZ, C)}^2\, 
	&:= \bip{   F ,  F }_{\bL^2(\cZ, C)} 
	 = \sum_{(x,y) \in {\cZ} } \, 
	 c(x,y) \,F^2(x,y)\,.
\end{aligned}
\end{equation}

 \begin{rem}
 \label{Rem_2.1}
  We note   from (\ref{A22})-(\ref{A23}) the adjoint relationship
\begin{equation} 
\label{AdjRel}
\bip{ f,  \widehat{{\cal K}}  g   }_{\bL^2(\cS, Q)} \,=\, \bip{    {\cal K}  f, g   }_{\bL^2(\cS, Q)}\,.
\end{equation}
 Thus (\ref{A24}) holds if, and only if, the operator ${\cal K}$ in (\ref{A26}) is self-adjoint on   $ \mathbb{L}^2 ({\cal S}, Q).$   
 \end{rem}

Finally,   we introduce   the bilinear {\it \textsc{Dirichlet}  form} associated with the \textsc{Markov} Chain: 
\begin{equation} 
\label{A26a}
{\cal E} (f,g) 
     \, := \, 
   - \bip{ f,  {\cal K}  g   }_{\bL^2(\cS, Q)}
    \,= \, 
   -  \sum_{y \in {\cal S}} \, q(y)\, f(y) \, \big( {\cal K} g \big) (y) 
   	\,=\, 
	-  \, \sum_{x \in {\cal S}} \sum_{y \in {\cal S}}  \,  \, q(y) \, \kappa (y,x) \, f (  y) \, g ( x). 
\end{equation}
This form is not symmetric, in general; but satisfies $\,{\cal E} (f,f)\ge 0\,,$ as follows from  Lemma \ref{DiscreteGradient} below.

\begin{lem}
\label{DiscreteGradient}
The \textsc{Dirichlet} form (\ref{A26a}) can be cast equivalently as  
\begin{equation} 
\label{E1}
{\cal E } (f,g) = \frac{1}{\,2\,}  \,  \sum_{x \in {\cal S}} \sum_{y \in {\cal S}}  \, \kappa (y,x) \, q(y)  \, \big( f ( y) -g ( x) \big)^2\,.
\end{equation}
 \end{lem}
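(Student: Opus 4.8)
The plan is to prove the identity \eqref{E1} by a one-line expansion of the square on its right-hand side, followed by two elementary cancellations, and to match what survives against the last expression in the definition \eqref{A26a} of the Dirichlet form. Concretely, I would write
\[
	\big(f(y)-g(x)\big)^2 \,=\, f^2(y) \,-\, 2\,f(y)\,g(x) \,+\, g^2(x)\,,
\]
so that the right-hand side of \eqref{E1} decomposes as a sum of three double sums over $(x,y)\in\cS\times\cS$, and then argue that the first and third of these vanish.

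For the $f^2$-term, $\tfrac12\sum_{x}\sum_{y}\kappa(y,x)\,q(y)\,f^2(y)$, I fix $y$ and sum over $x$ first, invoking the zero-row-sum property of the matrix $\cK$ recorded just after \eqref{A10}, namely $\sum_{x\in\cS}\kappa(y,x)=0$; this kills the term identically. For the $g^2$-term, $\tfrac12\sum_{x}\sum_{y}\kappa(y,x)\,q(y)\,g^2(x)$, I fix $x$ and sum over $y$ first, using $\sum_{y\in\cS}\kappa(y,x)\,q(y)=0$; in the notation of \eqref{A23a} this is exactly the invariance relation $\cK'Q=0$, i.e. equation \eqref{A18}. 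What then remains is precisely the cross term $-\sum_{x}\sum_{y}\kappa(y,x)\,q(y)\,f(y)\,g(x)$, which is the third (equivalently, by \eqref{A26}, the second) representation of $\cE(f,g)$ in \eqref{A26a}. This establishes \eqref{E1}.

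There is essentially no obstacle here; the only point requiring a moment's care is the bookkeeping of the diagonal terms $x=y$: the sum in \eqref{E1} runs over all of $\cS\times\cS$, so $(f(y)-g(x))^2$ is \emph{not} zero on the diagonal, but this is harmless because the two cancellations above (zero row sums of $\cK$, and $\cK'Q=0$) both already include the diagonal contributions. It is worth being explicit in the write-up that these are the two structural facts being used.

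Finally, I would note that \eqref{E1} immediately yields the nonnegativity $\cE(f,f)\ge 0$ asserted before the lemma: taking $g=f$, every off-diagonal summand $\kappa(y,x)\,q(y)\,(f(y)-f(x))^2$ is nonnegative since $\kappa(y,x)\ge 0$ for $x\neq y$ and $q(y)>0$, while the diagonal summands vanish; hence $\cE(f,f)=\tfrac12\sum_{x\neq y}\kappa(y,x)\,q(y)\,(f(y)-f(x))^2\ge 0$.
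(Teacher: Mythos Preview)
Your proof is correct and follows essentially the same approach as the paper: expand the square, kill the $f^2$-term via the zero-row-sum property of $\cK$, kill the $g^2$-term via invariance of $Q$, and identify the surviving cross term with the definition \eqref{A26a}. The only cosmetic difference is that the paper phrases the second cancellation through the adjoint rates $\widehat{\kappa}$ of \eqref{A23} (using $\sum_{y}\widehat{\kappa}(x,y)=0$), which is exactly your $\cK'Q=0$ divided by $q(x)$.
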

 \newpage
 \noindent
  {\it Proof:}  We have clearly 
$\,
\sum_{x \in {\cal S}} \sum_{y \in {\cal S}}  \, \kappa (y,x) \,  q(y)   f^2 ( y)=0
\,$
 on account of $\, \sum_{x \in {\cal S}} \, \kappa  (    y,x )=0\,$   for every $y \in {\cal S} \, $; as well as 
 $$
 \sum_{x \in {\cal S}} \sum_{y \in {\cal S}}  \, \kappa (y,x) \, q(y)  \,  g^2 ( x)\,=\, \sum_{x \in {\cal S}} \sum_{y \in {\cal S}}  \, \widehat{\kappa} ( x,y) \, q(x)  \,   g^2 ( x) \,=\,0\,,
 $$
from  the adjoint rates of (\ref{A23}) and   their  property $\, \sum_{y \in {\cal S}} \,\widehat{\kappa} ( x,  y )=0\,,$ $\, \forall~x \in {\cal S} .$ It follows from (\ref{A26a})  that 
$$
~~~~~~~~~~ ~~\sum_{x \in {\cal S}} \sum_{y \in {\cal S}}  \, \kappa (y,x) \, q(y)  \, \big( f ( y) -g ( x) \big)^2\,= \,   - 2\,\sum_{x \in {\cal S}} \sum_{y \in {\cal S}}  \, \kappa (y,x) \, q(y)  \,   f (  y)   \, g ( x)
\,= \,2\, {\cal E } (f,g) \,. \qquad \qed
$$

\subsection{Consequences of Detailed Balance }

  The detailed-balance  conditions (\ref{A24}) can be thought of as   positing  that ``the conductances of (\ref{C_Meas})  do   not depend on the  direction of the current's flow".   Under these conditions,     
 we have  for functions $\,f : {\cal S} \to \R\,$ and $\, F : {\cal S} \times {\cal S} \to \R\, $    the   {\it discrete integration-by-parts} formula
\begin{equation} 
\label{D6}
  \bip{ \, \nabla f ,  F }_{\bL^2(\cZ,C)}
  \,=\, 
  -   \,\bip{ f , \nabla  \cdot  F }_{\bL^2(\cS,Q)} ,
\end{equation}
  in addition to the concatenation property (\ref{D3}).   As a result, {\it the bilinear \textsc{Dirichlet}  form of (\ref{A26a}), (\ref{E1})   is now symmetric,} and induces the {\it \textsc{Hilbert}   $\, \mathbb{H}^{ 1}-$inner product and norm}
\begin{equation} 
\label{A26b}
	\bip{ f ,    g}_{\bH^1(\cS,Q)} \,:=\, {\cal E} (f,g) \,=\,   \Bip{\nabla f ,  \nabla g}_{\bL^2(\cZ,C)}\,, 
\end{equation}
\begin{equation} 
\label{D5}
 \big \|  f  \big \|^2_{\bH^1(\cS,Q)}
 	 \,:=  \,  
	  {\cal E} (f,f)
	   \,=\,
	  - \big \langle f,  {\cal K} f   \big \rangle_{\bL^2(\cS,Q)} 
	  \,= \, 
		   \sum_{(x,y) \in {\cal Z} }  \, c(x,y)
	   \, \big( f ( y) - f ( x) \big)^2
		  \,=\,    \big \| \nabla f  \big \|_{\bL^2(\cZ,C)}^2\,,
\end{equation}
respectively. We introduce also the dual of this norm, the {\it \textsc{Hilbert}   $\, \mathbb{H}^{-1}-$norm } 
\begin{equation} 
\label{D10}
 \big \|  f  \big \|_{\bH^{-1}(\cS,Q)} 
 	\,:=   \,
	    \big \| \nabla \big({\cal K}^{-1} f \big)  \big \|_{\bL^2(\cZ,C)}\,, ~~~~ \text{if} ~ f \in \text{Range} ({\cal K})\,; \qquad  \big \|  f  \big \|_{\bH^{-1}(\cS,Q)} \,:=   \, + \infty\,,~~ ~~ \text{otherwise;}
\end{equation}
and note the variational characterizations 
\begin{equation} 
\label{D11}
 \big \|  f  \big \|_{\bH^{-1}(\cS,Q)} 
 \,=\,  \sup_{g : {\cal S} \to \R} \frac{\,\big \langle f, g \big \rangle_{\bL^2(\cS,Q)}\,}{\, \big \|  g  \big \|_{\bH^1(\cS,Q)}\,} \,,
 \end{equation}
 \begin{equation} 
 \label{D12}
 \big \|  f  \big \|_{\bH^{-1}(\cS,Q)}
 	\,=\,
	 \inf_{F : {\cal Z} \to \R} \big\{ \big \| F \big \|_{\bL^2(\cZ,C)}: f = \nabla \cdot F \big\} \,=\, \inf_{g : {\cal S} \to \R} \big\{ \big \| \nabla g\big  \|_{\bL^2(\cZ,C)}: f = {\cal K} g \big\}\,.
 \end{equation}
Basic   \textsc{Hilbert} space theory shows that these two  infima are attained.

\begin{lem}
\label{DiscreteGradientToo}
Under  the     conditions of  (\ref{A24}), the  expression (\ref{E1}) for the \textsc{Dirichlet}  form becomes    
\begin{equation} 
\label{A50aaa}
{\cal E } (f,g)
   =  \frac{1}{\,2\,}   \sum_{x \in {\cal S}} \sum_{y \in {\cal S}}     \kappa (y,x)  q(y)   \big[ f ( y) -f ( x) \big]  \big[ g (  y) - g (  x) \big] =  
	\Bip{  \nabla f ,  \nabla g }_{\bL^2(\cZ,C)} 
   = \big \langle   f ,    g   \big \rangle_{\bH^1(\cS,Q)} \,. 
\end{equation}
\end{lem}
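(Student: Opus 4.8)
The plan is to start from the already-established general formula \eqref{E1} for the Dirichlet form, namely
\[
{\cal E}(f,g) = \frac{1}{2}\sum_{x\in{\cal S}}\sum_{y\in{\cal S}}\kappa(y,x)\,q(y)\,\big(f(y)-g(x)\big)^2,
\]
and expand the square as $f(y)^2 - 2f(y)g(x) + g(x)^2$. The first and third terms vanish by the row-sum identities (the argument is exactly the one used in the proof of Lemma~\ref{DiscreteGradient}: $\sum_x\kappa(y,x)=0$ kills the $f(y)^2$ term, and the adjoint rates $\widehat\kappa$ together with $\sum_y\widehat\kappa(x,y)=0$ kill the $g(x)^2$ term). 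So the only content is to rewrite the middle, bilinear term, and here is where detailed balance enters.

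Next I would run the same expansion on the symmetrized product $\big(f(y)-f(x)\big)\big(g(y)-g(x)\big)$ that appears on the right-hand side of \eqref{A50aaa}: it equals $f(y)g(y) - f(y)g(x) - f(x)g(y) + f(x)g(x)$. Summing against $\tfrac12\kappa(y,x)$ over all $(x,y)$, the terms $f(y)g(y)$ and $f(x)g(x)$ drop out by row-sum identities, and I am left with $-\tfrac12\sum_{x,y}\kappa(y,x)\big(f(y)g(x)+f(x)g(y)\big)$. Under detailed balance \eqref{A24}, $q(y)\kappa(y,x)=q(x)\kappa(x,y)$, so relabelling $x\leftrightarrow y$ in one of the two pieces shows $\sum_{x,y}q(y)\kappa(y,x)f(x)g(y) = \sum_{x,y}q(y)\kappa(y,x)f(y)g(x)$; i.e.\ the two cross terms coincide after weighting by $q$. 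Thus $\tfrac12\sum_{x,y}\kappa(y,x)q(y)\big[f(y)-f(x)\big]\big[g(y)-g(x)\big]$ collapses to $-\sum_{x,y}q(y)\kappa(y,x)f(y)g(x)$, which by \eqref{A26a} is precisely ${\cal E}(f,g)$. That establishes the first equality in \eqref{A50aaa}.

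For the remaining two equalities I would simply invoke the definitions already in place: the middle expression, rewritten as a sum over the edge set $\cZ$ using the conductances $c(y,x)=\tfrac12\kappa(y,x)q(y)$ from \eqref{C_Meas} and the discrete gradient $\nabla f(x,y)=f(y)-f(x)$ from \eqref{D1}, is by inspection $\bip{\nabla f,\nabla g}_{\bL^2(\cZ,C)}$ — being careful that the sum over ordered pairs $(x,y)$ with $\kappa(y,x)>0$ matches the convention under which $\cZ$ and $C$ were defined, and that the factor $\tfrac12$ is the same one appearing in \eqref{C_Meas}. The final equality $\bip{\nabla f,\nabla g}_{\bL^2(\cZ,C)} = \bip{f,g}_{\bH^1(\cS,Q)}$ is nothing but the definition \eqref{A26b} of the $\bH^1$ inner product, valid because detailed balance has just been assumed.

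I expect no genuine obstacle here; the one point that needs a little care is bookkeeping with the factor $\tfrac12$ and with whether edges are counted as ordered or unordered pairs, so that the passage from the double sum $\sum_{x,y}$ to the sum $\sum_{(x,y)\in\cZ}$ in \eqref{A50aaa} is consistent with \eqref{C_Meas} and \eqref{D4a}. Once that is pinned down, the proof is a two-line expansion plus an application of detailed balance to symmetrize the cross term, followed by unwinding definitions.
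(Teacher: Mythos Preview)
Your proposal is correct and follows essentially the same route as the paper: expand the product $[f(y)-f(x)][g(y)-g(x)]$, kill the $f(y)g(y)$ and $f(x)g(x)$ terms via the row-sum identities for $\kappa$ and $\widehat\kappa$, then use detailed balance \eqref{A24} to merge the two cross terms into $-2\sum_{x,y}q(y)\kappa(y,x)f(y)g(x)=2\,{\cal E}(f,g)$, after which the remaining equalities are unwound from the definitions \eqref{C_Meas}, \eqref{D4}, \eqref{A26b}. Your opening appeal to \eqref{E1} and the expansion of $(f(y)-g(x))^2$ is superfluous---you never actually use it, since the bilinear form you need is already \eqref{A26a}---and the paper skips it, going straight to the expansion of the symmetrized product.
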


\noindent
{\it Proof:}  Let us write the double summation in the above display as
$$
\sum_{x \in {\cal S}} \sum_{y \in {\cal S}}  \, \kappa (y,x)   q(y)  \, \Big[ \, f ( y) \, g(  y)- f ( y) \,  g (  x) - f( x)  \, g (  y)+ f ( x) \,  g (  x)\, \Big]=
$$
$$
=\,- \,\sum_{x \in {\cal S}} \sum_{y \in {\cal S}}  \, \kappa (y,x)   q(y)  \, \Big[ \,  f ( y) \,  g (  x) + f ( x) \,  g (  y)\, \Big]\,=\, - \,2\,\sum_{x \in {\cal S}} \sum_{y \in {\cal S}}  \, \kappa (y,x)   q(y)  \,   f ( y) \,  g (  x) \,=\, - \, 2\, {\cal E } (f,g)\,.
$$
  \newpage
 \noindent
Here, the first equality uses   (\ref{A23}), as well as the properties $\, \sum_{x \in {\cal S}} \, \kappa  (    y,x )=0\,$   for every $y \in {\cal S}    ,$ and $\, \sum_{y \in {\cal S}} \, \widehat{\kappa}  (     x ,y )=0\,$   for every $x \in {\cal S}  ; $  whereas,  the second equality uses the   conditions (\ref{A24}), and the third equality is just (\ref{A26a}).     

This proves the first equality in (\ref{A50aaa}).  The second and third are just restatements of (\ref{A26b}).    \qed

\begin{rem} 
\label{Rem3.1}
{\it   Additional Consequences:}  
    It follows   from       (\ref{D6})--(\ref{D5})  that, under the detailed-balance  conditions (\ref{A24}),  the mapping 
  $$\, \nabla \,: ~ 
{\bH^1 (\cS, Q)} \to  
{\bL^2(\cZ,C)}
$$   
is   an isometric embedding. Whereas,  the discrete divergence mapping $\, \nabla \cdot \,$ in (\ref{D2}) is, up to a minus sign,  the adjoint of the mapping   $\, \nabla \,: ~  
\bL^2(\cS,Q) \to \bL^2(\cZ,C)$.  
\end{rem}

\begin{rem} 
\label{Rem3.2}
{\it A Counterexample.}    In the absence of detailed balance, the \textsc{Dirichlet} form $\,{\cal E } (f,g) $ is not   an inner product; indeed, Remark \ref{Rem_2.1} shows that there exist functions $f :{\cal S} \to \R\,,$  $\,g: {\cal S} \to \R $ with $\, {\cal E}(f,g)=- \big \langle f,  {\cal K}  g   \big \rangle_{\bL^2(\cS,Q)} \neq - \big \langle g,  {\cal K}  f   \big \rangle_{\bL^2(\cS,Q)}= {\cal E}(g,f)$.  An explicit example of this situation is provided by the matrix  
$$
{\cal K}\,=\,
\begin{pmatrix}
      \,-1 & 1 & 0~   \\
      \,0 & -1 & 1~  \\  
      \,1 & 0 & -1    ~ 
\end{pmatrix},
 $$
whose invariant distribution $\,Q =(1/3, 1/3, 1/3)$\, is uniform on the state space $\,{\cal S} = \{ 1, 2, 3\} \, $ and for which detailed balance fails. Whereas, with $\,f = \mathfrak{e}_1 = (1, 0, 0)\,$ and $\,g = \mathfrak{e}_2 = (0, 1,   0)\,$  the first and second unit row vectors, respectively, and noting  $\, 3\, {\cal E} (\varphi, \gamma) =  \varphi \, {\cal K}' \gamma'\,$ from (\ref{A26a}), we observe
 $$
3\, {\cal E} (f, g) = \big(1, 0, 0\big) \begin{pmatrix}
      \,  1 ~   \\
      \,  -1  ~  \\  
      \,  0      ~ 
\end{pmatrix} = \, -1\,, \qquad 
3\, {\cal E} (  g, f) = \big(  0,1, 0\big) \begin{pmatrix}
      \,  -1 ~   \\
      \,  0  ~  \\  
      \,  1      ~ 
\end{pmatrix} = \, 0\,.
 $$
 
   Nevertheless,  $\,\big \|  f \big  \|_{\bH^1(\cS,Q)}  = \sqrt{{\cal E} (f,f)\,}\,$ is {\it always} a \textsc{Hilbert} norm,   with   associated inner product  given by the \textsc{Dirichlet} form $\, {\cal E}_{\text{sym}} (f,g)\,$ of the reversible \textsc{Markov} Chain,  with symmetrized rates $\, \kappa_{\text{sym}} (x,y) := ( \kappa (x,y) + \widehat{\kappa} (x,y) ) /2\,$ in the manner of (\ref{A26a}), (\ref{A23}); namely, $\, {\cal E}_{\text{sym}} (f,f) \equiv {\cal E}  (f,f)\, $ and 
 $$
  \big \langle f,g    \big \rangle  _{\bH^1(\cS,Q)}   =   -\frac{1}{2} \sum_{x \in {\cal S}} \sum_{y \in {\cal S}} \, \big[ q(y) \kappa (y,x) + q(x) \kappa (x,y) \big]  f(x) g (y) =\,-      \sum_{x \in {\cal S}} \sum_{y \in {\cal S}}  \,  \, q(y) \, \kappa_{\text{sym}} (y,x) \, f (  y) \, g ( x) .
 $$ 
  \end{rem}

\section{Time Reversal  and   Associated Martingales}
\label{sec4}

 It is well known that the \textsc{Markov} property is invariant under reversal of time (interchanging the roles of ``past" and ``future", keeping the ``present" as is). This means, in particular, that the time-reversed  process 
 \begin{equation} 
\label{A36}
 \widehat{X} (s) : = X (T-s)\,, ~~ ~~ 0 \le s \le T\,
 \end{equation} 
  is a \textsc{Markov} Chain, for any given $T \in (0, \infty)$. {\it But   how about the transition probabilities of this time-reversed  process?} These are fairly easy to compute, namely,   
\begin{equation} 
\label{A38}
\p \big(  \widehat{X}(s_2) =z \, \big|\, \widehat{{\cal G}}  (s_1 )   \big)=\,\p \big(  \widehat{X}(s_2) =z \, \big|\, \widehat{X}(s_1)   \big) = \,\rho^*  \big(s_1, \widehat{X}(s_1) ; s_2, z \big)  
 \end{equation}
 for $\, 0 \le s_1 \le s_2 \le T $, $\,  z \in {\cal S} $, where
\begin{equation} 
\label{A38'}
\rho^*  \big(s_1, y ; s_2, z \big) \,:=\, \frac{\, p (T-s_2, z)\,}{p (T-s_1, y)} \,\, \varrho_{s_2 - s_1} \big( z, y\big)\,;
 \end{equation}
but need not be time-homogeneous in general.

 \newpage
 \noindent

However: {\it Let us compute these same transition probabilities when   the Chain starts at its invariant distribution $Q$.}   We introduce at this point another probability measure $\,\mathbb{Q}\,$ on the underlying measurable space $\, ( \Omega, \F),$ under which the \textsc{Markov} Chain $\, {\cal X}\,$ has exactly the same dynamics as before, but its initial   distribution is   the  invariant probability   vector  $   Q = \big( q (y) \big)_{y \in {\cal S}}\,$ in (\ref{A18}).  Then, in lieu of (\ref{A5}), the finite-dimensional distributions of the Chain are 
 \[ 
\mathbb{Q} \big( X(0 )=x , X(\theta_1) = y_1, \cdots , X(\theta_n)= y_n, X(t) =z   ) \,=~~~~~~~~~~~~~~~~~~~~~~~~~~~~~~~~~~~~
\] 
$$ 
~~~~~~~~~~~~~~~~~~~~~~~~~~~~~~~~~~~~~~~~= \,q ( x)\, \varrho_{\theta_1} (x,y_1)\, \varrho_{\theta_2 - \theta_1} (y_1,y_2)\cdots \varrho_{\theta_n - \theta_{n-1}} (y_{n-1},y_n) \cdot \varrho_{t - \theta}  (y ,z)\,.
$$
On each $\sigma-$algebra $\,\F^X (t),$  $\, 0 \le t < \infty ,$ the two probability measures $\p$ and $\mathbb{Q}$ are equivalent; in fact, on the smaller $\sigma-$algebra $\, \sigma (X(t)) \,,$ we single out in the notation of (\ref{A21}) the so-called {\it likelihood   process}
\begin{equation} 
\label{A34}
L (t)\,:=\,\frac{\,\ud \p\,}{\ud \mathbb{Q}}\bigg|_{\sigma (X(t)) } \,=\,  \ell \big( t, X(t) \big) \,, \qquad 0 \le t < \infty\,.
\end{equation}

  Under this dispensation, the   transition probabilities are  
 \begin{equation} 
\label{A39}
\mathbb{Q} \big(  \widehat{X}(s_2) =z \, \big|\, \widehat{{\cal G}}  (s_1 )   \big)=\, \mathbb{Q}\big(  \widehat{X}(s_2) =z \, \big|\, \widehat{X}(s_1)   \big) = \, \widehat{\varrho}_{s_2-s_1}   \big(  \widehat{X}(s_1)  , z \big)  ,
 \end{equation}
 i.e., {\it  time-homogeneous,}  with 
 \begin{equation} 
\label{A39'}
\widehat{\varrho}_{h} (y,z)   \,:=\, \frac{\, q (  z)\,}{q (y)} \,\, \varrho_{h} \big( z, y\big)\,.
 \end{equation}
Invoking (\ref{A39'}) and (\ref{A9}), we see   that the $\, \mathbb{Q}-$infinitesimal-generator of this time-reversed \textsc{Markov} Chain $   \widehat{X} (s)   = X (T-s),  \,~ 0 \le s \le T$  \,in (\ref{A36}), is given {\it precisely by     $\, \widehat{{\cal K}}    =   ( \widehat{\kappa} (   y,z )  )_{(y,z) \in {\cal S}^2}\,$ as in (\ref{A23}).}  (We note parenthetically that, when the    detailed-balance  conditions (\ref{A24}) hold, the initial distributions and transition probabilities of the continuous-time \textsc{Markov} Chain $  X(t),\,  0 \le t \le T  ,$ and   of its time-reversal (\ref{A36}), are {\it exactly the same}   under  the probability measure 
$\mathbb{Q}$.) 

\begin{rem}
\label{rem5.1}
  The standing  assumption $P(0) \in {\cal M},$ i.e., that all entries of the initial distribution are strictly positive, is made     for economy of exposition.  For even when the probability vector  $P(0)$ belongs to the closure $  \overline{{\cal M}} $ of $   {\cal M}  $, i.e.,  some of its entries   are allowed to vanish,  there is at least one $x \in {\cal S} $ with $p(0,x)>0$;   then (\ref{A19}) and  irreducibility  imply $p(t,y) >0$ for all  $t>0,\, y \in {\cal S}.$ Thus, even if the curve $   ( P (t)  )_{0 \le t < \infty}\,$ starts out on the boundary $ \, \overline{{\cal M}} \setminus {\cal M},$   it enters $  {\cal M} $  immediately and stays there for all times $\,t \in (0, \infty).$  
\end{rem}

  By complete analogy with Proposition \ref{Liggett}, we formulate now the following result.   

\begin{prop}
\label{Liggett_Back}
 For any given   function $\, g : [0, T] \times {\cal S} \to \R\,$ whose  temporal derivative  $ \,  s   \mapsto   \partial g (s, x) \, $   is continuous for every state $\, x \in {\cal S} ,$ \,the process   below is a $\,\big( \widehat{\mathbb{G}},  \mathbb{Q}\big)-\,$local martingale:     
 \begin{equation} 
\label{A41}
\widehat{M}^{\,g} (s) \, :=  \, g \big( s,\widehat{X}(s) \big) - \int_0^s \big( \partial g + \widehat{{\cal K}} g \big) \big(u, \widehat{X} (u) \big)\, \ud u \,, \qquad 0 \le s  \le T\,. 
\end{equation}
 \end{prop}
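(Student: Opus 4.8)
\emph{Proof strategy.} The plan is to obtain this statement as a direct corollary of Proposition \ref{Liggett}, applied not to $\,{\cal X}\,$ under $\p$ but to the time-reversed chain $\,\widehat{X}(\cdot) = X(T-\cdot)\,$ of (\ref{A36}) under the measure $\mathbb{Q}$. The substantive groundwork has already been laid in the paragraphs preceding the Proposition: time reversal preserves the Markov property, so $\widehat{X}$ is a Markov chain under $\mathbb{Q}$; and by (\ref{A39})--(\ref{A39'}) its transition mechanism is \emph{time-homogeneous}, with semigroup $(\widehat{\varrho}_h)_{h\ge 0}$, precisely because under $\mathbb{Q}$ the chain issues from the invariant distribution $Q$ of (\ref{A18}). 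This passage from $\p$ to $\mathbb{Q}$ is the one genuinely delicate point: it is what collapses the time-inhomogeneous kernel (\ref{A38'}) to (\ref{A39'}). Differentiating (\ref{A39'}) at $h=0$ and invoking (\ref{A9}) then identifies the $\mathbb{Q}$-infinitesimal generator of $\widehat{X}$ as $\widehat{{\cal K}}$; note also that under $\mathbb{Q}$ one has $\widehat{X}(0) = X(T) \sim Q$, so the reversed curve again starts in the interior ${\cal M}$.

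First I would check that $\widehat{X}$ genuinely falls within the framework of Sections \ref{sec1}--\ref{sec2}. The rates $\widehat{\kappa}(y,z) = \big(q(z)/q(y)\big)\,\kappa(z,y)$ of (\ref{A23}) are non-negative off the diagonal and satisfy $\sum_{z}\widehat{\kappa}(y,z)=0$ for every $y$, as already recorded below (\ref{A23}); and since all entries of $Q$ are strictly positive, $\widehat{\kappa}(y,z)>0$ if and only if $\kappa(z,y)>0$, so the incidence graph of $\widehat{X}$ is that of $\,{\cal X}\,$ with every edge reversed, still strongly connected --- hence $\widehat{X}$ is an irreducible continuous-time chain, and by Remark \ref{1.1} this is all that is needed to place it in the setting at hand. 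The regularity of the transition semigroup needed to run the argument --- $h\mapsto\widehat{\varrho}_h(y,z)$ uniformly continuous and continuously differentiable --- is inherited from the same property of $h\mapsto\varrho_h$ through the algebraic relation (\ref{A39'}), and yields the semigroup expansion $\e_{\mathbb{Q}}\big[g(\widehat{X}(s+h))\,\big|\,\widehat{X}(s)=y\big] = g(y) + h\,\big(\widehat{{\cal K}}g\big)(y) + o(h)$, uniformly in $s$, i.e., the exact analogue of (\ref{A12''}).

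With these facts assembled, the conclusion is immediate: Proposition \ref{Liggett} --- equivalently, the same Lemma IV.20.12 of \textsc{Rogers \& Williams} (1987) cited there --- applies verbatim to $\widehat{X}$ under $\mathbb{Q}$, with the natural filtration $\widehat{\mathbb{G}} = \{\widehat{{\cal G}}(s)\}_{0\le s\le T}$, $\widehat{{\cal G}}(s) := \sigma\big(\widehat{X}(u),\,0\le u\le s\big)$, reading $\widehat{{\cal K}}$ for ${\cal K}$ and the bounded interval $[0,T]$ for $[0,\infty)$. The hypothesis there --- continuity of $s\mapsto\partial g(s,x)$ for every $x$ --- is precisely what we are assuming, and the time-dependence of $g$ (on the reversed clock $s$, not on the original time $T-s$) is permitted without any change. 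This gives that the process $\widehat{M}^{\,g}$ of (\ref{A41}) is a $(\widehat{\mathbb{G}},\mathbb{Q})$-local martingale, as claimed.

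I do not anticipate a genuine obstacle: the matters requiring care are structural rather than analytical --- carrying out the time reversal \emph{under} $\mathbb{Q}$ (so that homogeneity holds) and correctly identifying the reversed generator as $\widehat{{\cal K}}$, both already handled in the text. Should a self-contained argument be preferred, one could instead imitate the proof of Proposition \ref{Liggett} directly: using the finite-dimensional distributions of $\widehat{X}$ under $\mathbb{Q}$ together with the backward Kolmogorov equations $\partial \widehat{\varrho}_h = \widehat{{\cal K}}\,\widehat{\varrho}_h$, one verifies the defining conditional-expectation identity for $\widehat{M}^{\,g}$ across a deterministic partition of $[0,T]$ and passes to the limit, localizing along the jump times of $\widehat{X}$ to secure integrability. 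This is routine once homogeneity and the generator identification are in hand.
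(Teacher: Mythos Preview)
Your proposal is correct and follows exactly the approach the paper intends: the paper offers no separate proof of this Proposition, introducing it with ``By complete analogy with Proposition \ref{Liggett}'' after having identified, via (\ref{A39})--(\ref{A39'}) and (\ref{A9}), the $\mathbb{Q}$-generator of the time-reversed chain $\widehat{X}$ as $\widehat{\cal K}$. Your write-up simply fills in the structural checks (irreducibility of the reversed rates, inherited regularity of $h\mapsto\widehat{\varrho}_h$, time-homogeneity under $\mathbb{Q}$) that make the analogy rigorous.
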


 The    following important result is due to \textsc{Pavon} (1989), \textsc{Fontbona  \&   Jourdain} (2016) in the context of diffusions. Its proof (cf.\,Theorem 4.2 in \textsc{Karatzas, Schachermayer \& Tschiderer} (2019)) uses only the \textsc{Markov} property and the definition of conditional expectation, and carries over verbatim to our present context. An alternative argument, specific to the \textsc{Markov} Chain context, uses Proposition    \ref{Liggett_Back} and is given right  below.   

\begin{prop}
\label{FJ}
 {\bf Time-Reversed Likelihood Process as Martingale:} 
Fix $\,T \in (0, \infty)\,$ and consider the time-reversed Chain (\ref{A36}),  as well as the filtration $\, \widehat{\mathbb{G}} = \big\{ \widehat{{\cal G}} (s) \big\}_{0 \le s \le T}\,$ this process generates via $\, \widehat{{\cal G}} (s) := \sigma \big( \widehat{X} (u), ~ 0 \le u \le s \big).$ Then, the time-reversed likelihood   process 
\begin{equation} 
\label{A40}
L(T-s)\,=\,   \ell \big( T-s, \widehat{X} (s) \big) \,, \quad 0 \le s \le  T~~~~~~~\text{is a}~\, \big( \widehat{\mathbb{G}}, \mathbb{Q}\big)- \text{martingale.}
\end{equation}
 \end{prop}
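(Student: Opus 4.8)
The plan is to recognize the time-reversed likelihood process as an instance of the abstract martingale construction of Proposition \ref{Liggett_Back}, applied to a carefully chosen space-time function $g$. Specifically, the idea is to take $g(s,x) := \ell(T-s,x)$ on $[0,T]\times\cS$. This function is differentiable in its temporal argument since $t \mapsto \ell(t,x)$ satisfies the backward equation \eqref{A22}; indeed $\partial_s g(s,x) = -\,\partial\ell(T-s,x) = -\big(\widehat{\cK}\,\bm\ell\big)(T-s,x) = -\big(\widehat{\cK} g\big)(s,x)$, where in the last step we used that $\widehat{\cK}$ acts only on the spatial variable and $g(s,\cdot) = \ell(T-s,\cdot)$. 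Hence $\big(\partial g + \widehat{\cK} g\big)(s,x) \equiv 0$, and Proposition \ref{Liggett_Back} tells us that
\begin{equation}
\label{eq:FJ_mart}
\widehat{M}^{\,g}(s) \,=\, g\big(s,\widehat X(s)\big) - \int_0^s \big(\partial g + \widehat{\cK} g\big)\big(u,\widehat X(u)\big)\,\ud u \,=\, \ell\big(T-s,\widehat X(s)\big) \,=\, L(T-s)
\end{equation}
is a $\big(\widehat{\mathbb G},\mathbb Q\big)$-local martingale, the integral term vanishing identically.

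Next I would upgrade ``local martingale'' to ``martingale.'' Since the state space $\cS$ is finite and $t\mapsto \ell(t,x)$ is continuous on the compact interval $[0,T]$ for each $x$, the process $L(T-s) = \ell(T-s,\widehat X(s))$ is uniformly bounded on $[0,T]$, say by $\sup_{(t,x)\in[0,T]\times\cS}\ell(t,x) < \infty$. A bounded local martingale on a finite time horizon is a genuine (uniformly integrable) martingale, which gives the claim.

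The only genuinely substantive point — and the place where a reader might want more detail — is the identification in the first paragraph of $\widehat{\cK}$ as the $\mathbb Q$-generator of the time-reversed Chain $\widehat X$, so that Proposition \ref{Liggett_Back} legitimately applies. This has, however, already been established in the discussion surrounding \eqref{A39}--\eqref{A39'}: under $\mathbb Q$ the Chain starts from the invariant distribution $Q$, the reversed transition probabilities $\widehat\varrho_h(y,z) = \frac{q(z)}{q(y)}\varrho_h(z,y)$ are time-homogeneous, and differentiating at $h=0$ via \eqref{A9} yields precisely the rates $\widehat\kappa(y,z) = \frac{q(z)}{q(y)}\kappa(z,y)$ of \eqref{A23}. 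So no new work is needed there; the argument is essentially a bookkeeping assembly of \eqref{A22}, \eqref{A39'}, and Proposition \ref{Liggett_Back}. I expect the main (minor) obstacle to be stating cleanly why $\partial g + \widehat{\cK}g \equiv 0$ — i.e. being careful that the sign from the chain rule $\partial_s[\ell(T-s,x)] = -\partial\ell(T-s,x)$ exactly cancels the $\widehat{\cK}$ term supplied by the backward equation \eqref{A22} — and, if one wants the alternative self-contained proof promised in the text, verifying that this matches the Markov-property computation of \textsc{Fontbona \& Jourdain}.
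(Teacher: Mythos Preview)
Your proof is correct and follows essentially the same route as the paper: choose $g(s,x)=\ell(T-s,x)$, use the backward equation \eqref{A22} to see that $\partial g + \widehat{\cK} g \equiv 0$, and invoke Proposition~\ref{Liggett_Back} to obtain a $(\widehat{\mathbb G},\mathbb Q)$-local martingale. The only difference is in the upgrade step: you appeal to boundedness (finite $\cS$, continuous $\ell$ on $[0,T]$), whereas the paper argues that a positive local martingale is a supermartingale and then observes that its $\mathbb Q$-expectation is identically $1$, forcing it to be a true martingale; the paper's route has the advantage of carrying over verbatim to the countable-state-space setting of Section~\ref{sec9}, where boundedness is no longer available.
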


 \newpage
\noindent
 {\it Proof:} 
We consider in (\ref{A41}) the function 
$\, 
g(s, x) = \ell (T-s, x)\,, ~\, 0 \le s \le T, ~ x \in {\cal S}
\,$
 and note that $   \partial g(s, x) = - \partial \ell (T-s, x) = - \big( \widehat{{\cal K}} \, \ell \big) (T-s, x) \, $ holds on account of (\ref{A22}).

 It follows   from (\ref{A41}),  
 whose integrand now vanishes,  that the time-reversed   likelihood ratio process    $ \ell \big( T-s, \widehat{X} (s) \big)  \,, ~ 0 \le s \le  T \,$ is a $\, \mathbb{Q}-$local-martingale of the time-reversed   filtration $\,\widehat{\mathbb{G}}\,$.  But this process is positive, thus also a $  \mathbb{Q}-$supermartingale, and   its   expectation
$$
 \mathbb{E^Q} \big[ \ell \big(T-s, X(T-s) \big) \big]  \,=\, \sum_{y \in {\cal S}} \, q(y)\, \frac{\,p (T-s, y)\,}{q(y)}\,=\,1\,, \qquad 0 \le s \le T 
$$
  is constant. Therefore $  \,\ell \big( T-s, \widehat{X} (s) \big)  \,, ~ 0 \le s \le  T \,$ is a true $\, \mathbb{Q}-$martingale, exactly as stated in (\ref{A40}).  \qed

\section{The  Variance Process}
\label{sec5}
  
 For a probability vector $\, P =  ( p(y) )_{y \in {\cal S}} \in {\cal M}\,$ with   positive  entries,  we introduce its likelihood vector ${\bm \ell } =  ( \ell (y) )_{y \in {\cal S}} \in {\cal L}\,$ with $\,  \ell  (y) = p(y) / q(y)$ as in   (\ref{A21}), relative to   the invariant distribution $Q$ of the Chain. We define then in the manner of (\ref{D4a})  the {\it Variance} of $P$ relative to $Q,$   also known as {\it $\chi^2-$divergence,}  as 
\begin{equation} 
\label{A41b}
V \big(P\,|\,Q\big) \, \equiv  \,  \text{Var}^\mathbb{Q} \big( {\bm \ell} \big)\,  :=   \,   \sum_{y \in {\cal S}} \, q (y) \,\ell  ^2 ( y) -1 \,= \,\big| \big| {\bm \ell} \big| \big|_{\bL^2(\cS,Q)}^2 -1 \,.
\end{equation} 
Let us recall now from (\ref{A19})  the curve $  \big( P(t) \big)_{0 \le t < \infty} \subset {\cal M}\,$   of time-marginal distributions for our continuous-time \textsc{Markov} Chain, and the corresponding curve of likelihoods $\,{\bm \ell }_t =  ( \ell (t,y) )_{y \in {\cal S}}\,,~ 0 \le t < \infty\,$ in the space $ {\cal L}\,,$ with $\,  \ell  (t,y) = p(t,y) / q(y)$. We will show    in Proposition \ref{DE}   that the 
variance just defined in (\ref{A41b}) plays the role of   \textsc{Lyapunov} function for  the convergence to equilibrium along this curve.

\smallskip
 
To see this, we summon   the likelihood process $\, L(t) = \ell ( t, X(t)), ~ 0 \le t < \infty\,$ from (\ref{A34}) and consider its square  $\, L^2(t)  , ~ 0 \le t < \infty\,,$ the so-called  {\it  Variance Process,  under time-reversal.}  

\begin{prop}
\label{DE_Traj}
For any given $T\in (0, \infty),$ we have the \textsc{Doob-Meyer} decomposition 
 \begin{equation} 
\label{A41a}
   \ell^2 \big( T-s, \widehat{X} (s) \big) \,=\,\widehat{M}  (s) + \int_0^s \sum_{ y \neq x }  \bigg( \widehat{\kappa} (x,y) \,   \Big( \ell (t, y) - \ell (t, x ) \Big)^2  \bigg)
  \bigg|_{t = T-u \atop x = \widehat{X} (u)}  
   \, \ud u\,, \qquad 0 \le s \le T~~~~
\end{equation} 
of the   time-reversed  
variance process $\,  \ell^2 \big( T-s, \widehat{X} (s) \big) ,$ $\,0 \le s \le T\,,$ where $\widehat{M}$  is   a     $ \big( \widehat{\mathbb{G}}, \mathbb{Q}\big)-$martingale.
\end{prop}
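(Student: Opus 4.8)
The plan is to apply Proposition~\ref{Liggett_Back} to a well-chosen function $g$ and then identify the resulting integrand. Specifically, I would take
\begin{equation*}
g(s,x) := \ell^2(T-s,x)\,, \qquad 0 \le s \le T,\ x \in \cS\,,
\end{equation*}
so that $g(s,\widehat X(s)) = \ell^2(T-s,\widehat X(s))$ is the object on the left-hand side of \eqref{A41a}. Since $t \mapsto \ell(t,x)$ solves the backward equation \eqref{A22} and is in particular continuously differentiable in $t$ for each fixed $x$, the map $s \mapsto \partial g(s,x) = -2\,\ell(T-s,x)\,\partial\ell(T-s,x)$ is continuous, so Proposition~\ref{Liggett_Back} applies and tells us that
\begin{equation*}
g(s,\widehat X(s)) - \int_0^s \big(\partial g + \widehat\cK g\big)(u,\widehat X(u))\,\ud u
\end{equation*}
is a $(\widehat{\mathbb G},\mathbb Q)$-local martingale. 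The task then reduces to computing $\big(\partial g + \widehat\cK g\big)(u,x)$ and checking it equals the integrand displayed in \eqref{A41a}.

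The key computation is the ``carré du champ'' type identity. Write $t = T-u$ and $x = \widehat X(u)$. First, $\partial g(u,x) = -\partial_t\big(\ell^2(t,x)\big) = -2\ell(t,x)\,\partial_t\ell(t,x) = -2\ell(t,x)\,(\widehat\cK\ell)(t,x)$, using \eqref{A22}. Second, by the definition \eqref{A26} of the generator applied to the function $y \mapsto \ell^2(t,y)$ with rates $\widehat\kappa$,
\begin{equation*}
(\widehat\cK g)(u,x) = \sum_{y \neq x} \widehat\kappa(x,y)\big(\ell^2(t,y) - \ell^2(t,x)\big)\,.
\end{equation*}
Adding these and using $2\ell(t,x)(\widehat\cK\ell)(t,x) = 2\ell(t,x)\sum_{y\neq x}\widehat\kappa(x,y)\big(\ell(t,y)-\ell(t,x)\big)$, the combination $\partial g + \widehat\cK g$ becomes
\begin{equation*}
\sum_{y \neq x}\widehat\kappa(x,y)\Big[\ell^2(t,y) - \ell^2(t,x) - 2\ell(t,x)\big(\ell(t,y)-\ell(t,x)\big)\Big]
= \sum_{y\neq x}\widehat\kappa(x,y)\big(\ell(t,y) - \ell(t,x)\big)^2\,,
\end{equation*}
since the bracket is exactly $\big(\ell(t,y)-\ell(t,x)\big)^2$. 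Therefore $\ell^2(T-s,\widehat X(s)) - \int_0^s (\cdots)\,\ud u$ is a local martingale, which is precisely the claimed decomposition \eqref{A41a} after moving the integral to the right-hand side; note the integrand is nonnegative, consistent with the Doob--Meyer language.

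The final point is to upgrade the local martingale $\widehat M$ to a true martingale. Here I would argue as in the proof of Proposition~\ref{FJ}: the state space $\cS$ is finite, so $\ell(t,\cdot)$ is bounded on compact $t$-intervals, hence $\ell^2(T-s,\widehat X(s))$ is bounded on $[0,T]$; the integral term is likewise bounded (the sum is finite, $\widehat\kappa$ and $\ell$ are bounded on $[0,T]\times\cS$); consequently $\widehat M$ is a bounded process on $[0,T]$, and a bounded local martingale is a true martingale. I do not anticipate a serious obstacle here --- the only place demanding any care is the algebraic identity $\ell^2(t,y) - \ell^2(t,x) - 2\ell(t,x)(\ell(t,y)-\ell(t,x)) = (\ell(t,y)-\ell(t,x))^2$, which is elementary, and the bookkeeping of signs between the backward equation \eqref{A22} and the generator $\widehat\cK$ of the time-reversed chain. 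Everything else is a direct application of Proposition~\ref{Liggett_Back} together with the finiteness of $\cS$.
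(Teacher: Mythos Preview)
Your proposal is correct and follows essentially the same route as the paper: apply Proposition~\ref{Liggett_Back} with $g(s,x)=\ell^2(T-s,x)$, use the backward equation \eqref{A22} to compute $\partial g$, combine with $\widehat{\cK}g$ to obtain the carr\'e-du-champ identity $(\partial g+\widehat{\cK}g)(s,x)=\sum_{y}\widehat\kappa(x,y)(\ell(t,y)-\ell(t,x))^2$, and then upgrade the local martingale to a true one by boundedness coming from the finiteness of $\cS$. The only cosmetic difference is that the paper writes $\widehat{\cK}g$ as $\sum_y\widehat\kappa(x,y)[\ell^2(t,y)+\ell^2(t,x)]$ (using $\sum_y\widehat\kappa(x,y)=0$) before combining, whereas you keep the difference form; the algebra is identical.
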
  

\noindent
{\it Proof:} The first claim follows from  Proposition \ref{FJ} and the \textsc{Jensen} inequality. For the second claim we deploy Proposition \ref{Liggett_Back} with 
$\, 
g(s, x) := \ell^2 (T-s, x)\,, ~\, 0 \le s \le T, ~~ x \in {\cal S} 
\,,$
to conclude via the   calculation  
 \begin{equation} 
\label{A41b-claim}
\big( \partial g  + \widehat{{\cal K}} g  \big) (T-s, x) \,= \,\sum_{y \in {\cal S} } \, \,\widehat{\kappa} (x,y) \Big( \ell (T-s, y) - \ell (T-s, x) \Big)^2 
\end{equation} 
that  $\widehat{M}$   is  a  local  $\, \big( \widehat{\mathbb{G}}, \mathbb{Q}\big)-$martingale.   The uniform continuity of  $ [0,T] \ni t \mapsto p_t (x,y) \in [0,1] $  and the finiteness of the state-space  imply that this  process is actually bounded, thus a true $\, \mathbb{Q}-$martingale. 

Let us now justify the claim (\ref{A41b-claim}). From the Backwards  Equation (\ref{A22}), we have 
\begin{equation} 
\label{A41c-DM}
\partial g (T-s, x) \,= - 2 \ell (T-s, x)\, \partial  \ell (T-s, x) = - 2 \ell (T-s, x)  \sum_{y \in {\cal S}} \widehat{\kappa} (x,y) \ell (T-s, y),
\end{equation} 
$$
\big(  \widehat{{\cal K}} g  \big) (T-s, x) \,= \,\sum_{y \in {\cal S} } \, \widehat{\kappa} (x,y) \, \ell^2 (T-s, y)\,= \,\sum_{y \in {\cal S} } \, \widehat{\kappa} (x,y)  \, \Big[ \ell^2 (T-s, y)  +  \ell^2 (T-s, x) \Big]
$$
on account of the property $\, \sum_{y \in {\cal S}} \widehat{\kappa} (  x, y )=0\,$   for every $x \in {\cal S} ;$ now  (\ref{A41b-claim}) follows readily. 
\qed

 \smallskip
Proposition \ref{DE_Traj} deals with the trajectorial behavior of the  
variance process; and for this, it is crucial to let time run backwards. Now, we want to adopt also an ``aggregate" point of view, and take $\mathbb{Q}-$expectations in (\ref{A41a}). When doing this, it does not matter any more  whether time runs forwards or backwards, so we state the following result ``forwards in time".  Recalling (\ref{A23}), we obtain thus  the   dissipation of   the variance. 


\begin{prop}
\label{DE}
Along the curve $  \big( P(t) \big)_{0 \le t < \infty} $ of time-marginal  distributions in (\ref{A19}),  the   
variance   
\begin{equation} 
\label{Variance}  
V \big(P(t)\,|\,Q\big)= {\rm Var}^\mathbb{Q} \big( {\bm \ell }_t \big)= \sum_{y \in {\cal S}} \, q (y) \,\ell  ^2 (t, y) -1 \,= \, \big \| {\bm \ell}_t \big \|_{\bL^2(\cS,Q)}^2 -1\,, \quad   0 \le t < \infty\,\,
 \end{equation}
is   decreasing  with $\, \lim_{t \to \infty } \downarrow    
V \big(P(t)\,|\,Q\big)=0 ,$ and the rate of its decrease is given by 
\begin{equation} 
\label{Variance_Decay}
 \partial \, \big \|  {\bm \ell }_t  \big \|_{\bL^2(\cS,Q)}^2 \,=\,   \partial \, %
 V  \big(P(t)\,|\,Q\big) \,=\, -  2 \, {\cal E} \big( {\bm \ell }_t\,  ,  {\bm \ell }_t  \,\big) 
 \end{equation}
(thus by $\, - 2\, \big \|
  {\bm \ell }_t \big \|^2_{\bH^1(\cS,Q)}  \,$ under the detailed-balance conditions (\ref{A24})). More precisely, 
 \begin{equation} 
\label{A41d}
V \big(P(T)\,|\,Q\big) \,=   \,   
V \big(P(0)\,|\,Q\big) -    \int_0^T \sum_{(x,y) \in {\cal Z} }  \, q (y) \, \kappa (y,x)\,  \Big( \ell (t,y) - \ell (t,x) \Big)^2 \, \ud t 
\end{equation} 
 \begin{equation} 
\label{A41e}
 = \int_T^\infty \sum_{(x,y) \in {\cal Z} }  \, q (y) \, \kappa (y,x) \,  \Big( \ell (t,y) - \ell (t,x) \Big)^2 \, \ud t \,.
\end{equation}
\end{prop}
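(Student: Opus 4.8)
The plan is to derive Proposition \ref{DE} by taking $\mathbb{Q}$-expectations in the trajectorial Doob--Meyer decomposition of Proposition \ref{DE_Traj}, and then translating the result back into forward time. First I would observe that the time-reversed variance process $\ell^2(T-s,\widehat X(s))$ has, by Proposition \ref{DE_Traj}, the representation $\ell^2(T-s,\widehat X(s)) = \widehat M(s) + \int_0^s \big(\ldots\big)\,\ud u$ with $\widehat M$ a genuine $(\widehat{\mathbb{G}},\mathbb{Q})$-martingale. Taking expectations kills the martingale term. On the left, $\mathbb{E}^{\mathbb{Q}}[\ell^2(T-s,\widehat X(s))] = \sum_{y\in\cS} q(y)\,\ell^2(T-s,y) = \|{\bm\ell}_{T-s}\|_{\bL^2(\cS,Q)}^2 = V(P(T-s)\,|\,Q)+1$, using the definition (\ref{A41b}) and that $\widehat X(s)$ has law $Q$ under $\mathbb{Q}$ (since $Q$ is invariant). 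On the right, I use Fubini together with $\mathbb{E}^{\mathbb{Q}}[h(\widehat X(u))] = \sum_x q(x) h(x)$ to get
\begin{equation}
\label{plan1}
\mathbb{E}^{\mathbb{Q}}\!\left[\int_0^s \sum_{y\neq x}\widehat\kappa(x,y)\big(\ell(t,y)-\ell(t,x)\big)^2\Big|_{t=T-u,\,x=\widehat X(u)}\ud u\right]
= \int_0^s \sum_{x}\sum_{y\neq x} q(x)\,\widehat\kappa(x,y)\big(\ell(T-u,y)-\ell(T-u,x)\big)^2\,\ud u.
\end{equation}

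Next I would simplify the kernel: by the definition (\ref{A23}), $q(x)\,\widehat\kappa(x,y) = q(y)\,\kappa(y,x)$, so the double sum becomes $\sum_{(y,x):\,\kappa(y,x)>0} q(y)\,\kappa(y,x)\big(\ell(T-u,y)-\ell(T-u,x)\big)^2$, i.e.\ a sum over $\cZ$ after relabeling. Comparing with Lemma \ref{DiscreteGradient}, equation (\ref{E1}), this double sum is exactly $2\,{\cal E}({\bm\ell}_{T-u},{\bm\ell}_{T-u})$. Substituting $t=T-u$ in the integral and setting $s=T$ yields
\begin{equation}
\label{plan2}
V\big(P(0)\,|\,Q\big)+1 = V\big(P(T)\,|\,Q\big)+1 + \int_0^T 2\,{\cal E}\big({\bm\ell}_t,{\bm\ell}_t\big)\,\ud t,
\end{equation}
which rearranges to (\ref{A41d}) after expanding ${\cal E}$ via (\ref{E1}) once more. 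Differentiating in $T$ (the integrand is continuous in $t$ by the smoothness of $t\mapsto p(t,y)$ noted after (\ref{A2})) gives $\partial V(P(t)\,|\,Q) = -2\,{\cal E}({\bm\ell}_t,{\bm\ell}_t)$, which is (\ref{Variance_Decay}); the parenthetical remark about detailed balance follows from (\ref{D5}). Since ${\cal E}({\bm\ell}_t,{\bm\ell}_t)\ge 0$ (the ``$\ge 0$'' assertion after (\ref{A26a}), justified by Lemma \ref{DiscreteGradient}), $V(P(t)\,|\,Q)$ is nonincreasing.

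For the limit $\lim_{t\to\infty} V(P(t)\,|\,Q)=0$ and the representation (\ref{A41e}), I would argue as follows. From (\ref{A20a}), $p(t,y)\to q(y)$ for every $y$, hence $\ell(t,y)\to 1$, hence $V(P(t)\,|\,Q)=\sum_y q(y)\ell^2(t,y)-1\to 0$ by finiteness of $\cS$. Then letting $T\to\infty$ in the integrated identity (\ref{plan2}), i.e.\ in $V(P(0)\,|\,Q) = V(P(T)\,|\,Q) + \int_0^T 2{\cal E}({\bm\ell}_t,{\bm\ell}_t)\,\ud t$, shows the improper integral $\int_0^\infty 2{\cal E}({\bm\ell}_t,{\bm\ell}_t)\,\ud t$ converges to $V(P(0)\,|\,Q)$; subtracting the identity at level $T$ gives $V(P(T)\,|\,Q) = \int_T^\infty 2{\cal E}({\bm\ell}_t,{\bm\ell}_t)\,\ud t$, which is (\ref{A41e}) once ${\cal E}$ is expanded via (\ref{E1}).

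I do not expect a serious obstacle here: the entire argument is an expectation-and-bookkeeping exercise once Propositions \ref{DE_Traj} and \ref{FJ} are in hand. The one point requiring a little care is the interchange of expectation and time-integral in (\ref{plan1})--(\ref{plan2}) and the passage to $T\to\infty$; both are justified because the integrand $\sum_{y\neq x}\widehat\kappa(x,y)(\ell(t,y)-\ell(t,x))^2$ is bounded uniformly on $[0,\infty)$ (finite state-space, and $t\mapsto p(t,y)\in[0,1]$ with $q(y)$ bounded away from $0$), so Fubini and monotone/dominated convergence apply without fuss. The only genuinely substantive input is the identification of the Doob--Meyer compensator with $2\,{\cal E}$, and that has already been done in Proposition \ref{DE_Traj} together with Lemma \ref{DiscreteGradient}.
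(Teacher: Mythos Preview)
Your proposal is correct and follows essentially the approach the paper indicates: aggregate the trajectorial \textsc{Doob--Meyer} decomposition of Proposition~\ref{DE_Traj} by taking $\mathbb{Q}$-expectations, use $q(x)\,\widehat\kappa(x,y)=q(y)\,\kappa(y,x)$ from (\ref{A23}) and Lemma~\ref{DiscreteGradient} to identify the compensator as $2\,{\cal E}({\bm\ell}_t,{\bm\ell}_t)$, then invoke (\ref{A20a}) and the finiteness of $\cS$ for the limit. The paper also notes (in the proof of Proposition~\ref{Prel_Proj}) a shorter direct route for (\ref{Variance_Decay}) alone, namely $\partial\|{\bm\ell}_t\|^2_{\bL^2(\cS,Q)} = 2\langle{\bm\ell}_t,\widehat{\cK}{\bm\ell}_t\rangle_{\bL^2(\cS,Q)} = 2\langle{\cK}{\bm\ell}_t,{\bm\ell}_t\rangle_{\bL^2(\cS,Q)} = -2\,{\cal E}({\bm\ell}_t,{\bm\ell}_t)$ via the adjoint relation (\ref{AdjRel}); but your trajectorial-then-aggregate argument is the one the paper actually signals.
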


The decomposition (\ref{A41a})   is a trajectorial version of this   variance   dissipation, at the level of the individual particle    viewed under   the probability measure $\mathbb{Q}$ and   under time-reversal. 
  As a consequence of (\ref{A41a}) and of the \textsc{Bayes} rule, we deduce from (\ref{A41a})  the \textsc{Doob-Meyer} decomposition 
 \begin{equation} 
\label{A41c}
   \ell  \big( T-s, \widehat{X} (s) \big) \,=\,\widehat{N}  (s) + \int_0^s \sum_{ y \neq x } \bigg( \frac{\,  \widehat{\kappa} (x,y)\,}{\,  \ell  \big( t, x \big)\,}   \,  \Big( \ell (t, y) - \ell (t, x ) \Big)^2   \bigg)\bigg|_{t = T-u \atop x = \widehat{X} (u)}  \, \ud u\,, \qquad 0 \le s \le T~~~~
\end{equation}
of the time-reversed likelihood process, where $\widehat{N}$  is a     $\, \big( \widehat{\mathbb{G}}, \mathbb{P}\big)-$martingale.

\subsection{Steepest Descent of the Variance, under Detailed Balance }
\label{sec6.1}

 We state now and establish the following result,  Theorem \ref{Steep_Desc_Var}.  As   pointed out in \textsc{Jordan,   Kinderlehrer  \& Otto}\,(1998),  results of this type go  as far back as the paper by \textsc{Courant, Friedrichs \& Lewy} (1928)  in the   Brownian motion context. We deploy the notation of (\ref{A21}) for the   likelihood ratios   relative to the invariant distribution, as well as the following notion.

\begin{defn}
\label{SD}
We say that a   smooth  curve of probability measures $  (   P (t)  )_{t_0 \le t < \infty} \subset \cM = \cP_+(\cS)$ is   of {\it steepest descent} locally at $\,t = t_0\,,$ for a given   smooth  functional $F : \cM \to \R$ and relative to a given metric $\varrho$ on $\cM,$  if it minimizes, among all curves $ \,   ( \widetilde P (t)   )_{t_0 \le t < \infty} \subset \cM\,$ satisfying $\widetilde P (t_0) =  P (t_0),$  the infinitesimal rate of change of $F$ as measured on $\cM$ in terms of $\varrho\,,$  namely,
\begin{align*}
\lim_{h \downarrow 0} \,\frac{\, F \big(\widetilde P (t_0 +h) \big) - F  \big(P   (t_0) \big)}{\,\varrho  \big( \widetilde P (t_0+h), P (t_0) \big)\, } \,.
\end{align*}
\end{defn}

\begin{thm}
{\bf Steepest  Descent for the Variance:}
\label{Steep_Desc_Var}
Under  the conditions (\ref{A24}) of detailed balance,  {\it the curve $\,  ( P (t)  )_{0 \le t < \infty}\,$ of time-marginal distributions   in (\ref{A19}) has the property of}   steepest decent for the variance  of (\ref{Variance})   with respect to the metric distance bequeathed by the norm     of (\ref{D10}), i.e., 
\begin{equation} 
\label{Metric_Dist}
\varrho  \big( P_1, P_2 \big) 
	\,:= \, \big \| \, {\bm \ell}_1 - {\bm \ell}_2 \,\big 
		  \|_{\bH^{-1}(\cS,   Q)}  \qquad \text{ for  \, $P_1 = {\bm \ell}_1 Q$ \,  and $~P_2 = {\bm \ell}_2\, Q\,.$}
\end{equation}
\end{thm}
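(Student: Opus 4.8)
The plan is to reduce everything to a first-order calculation at the base point $t_0$, followed by the Cauchy--Schwarz duality between the norms $\|\cdot\|_{\bH^1(\cS,Q)}$ and $\|\cdot\|_{\bH^{-1}(\cS,Q)}$ recorded in (\ref{D11}). Fix $t_0\in[0,\infty)$ and write $P_0:=P(t_0)$, $\bm{\ell}_0:=\bm{\ell}_{t_0}$; we may assume $P_0\neq Q$, for otherwise $\bm{\ell}_0=\mathbf 1$, the curve is constant, and the variance already vanishes. Let $(\widetilde P(t))_{t_0\le t<\infty}\subset\cM$ be any smooth competing curve with $\widetilde P(t_0)=P_0$, let $\widetilde{\bm{\ell}}_t$ denote its likelihood ratios relative to $Q$ (a smooth curve in $\cL$ with $\widetilde{\bm{\ell}}_{t_0}=\bm{\ell}_0$), and set $\bm{v}:=\partial\widetilde{\bm{\ell}}_t\big|_{t=t_0}$. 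Since $\sum_{x\in\cS}q(x)\widetilde\ell(t,x)\equiv 1$, differentiation gives $\bip{\bm{v},\mathbf 1}_{\bL^2(\cS,Q)}=0$; under detailed balance $\cK$ is self-adjoint on $\bL^2(\cS,Q)$ (Remark \ref{Rem_2.1}) with $\ker\cK=\R\mathbf 1$ by irreducibility, so $\mathrm{Range}(\cK)=\{v:\bip{v,\mathbf 1}_{\bL^2(\cS,Q)}=0\}$ and hence $\bm{v}\in\mathrm{Range}(\cK)$, in particular $\|\bm{v}\|_{\bH^{-1}(\cS,Q)}<\infty$.

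Next I would record two first-order expansions as $h\downarrow 0$. Differentiating the squared $\bL^2(\cS,Q)$-norm in (\ref{Variance}) yields
\begin{equation*}
V\big(\widetilde P(t_0+h)\,|\,Q\big)-V\big(P_0\,|\,Q\big)=\big\|\widetilde{\bm{\ell}}_{t_0+h}\big\|_{\bL^2(\cS,Q)}^2-\big\|\bm{\ell}_0\big\|_{\bL^2(\cS,Q)}^2=2h\,\bip{\bm{\ell}_0,\bm{v}}_{\bL^2(\cS,Q)}+o(h),
\end{equation*}
while $\widetilde{\bm{\ell}}_{t_0+h}-\bm{\ell}_0=h\bm{v}+o(h)$ belongs to $\mathrm{Range}(\cK)$ for each $h$, so by homogeneity and continuity of the genuine norm $\|\cdot\|_{\bH^{-1}(\cS,Q)}$ on that subspace,
\begin{equation*}
\varrho\big(\widetilde P(t_0+h),P_0\big)=\big\|\widetilde{\bm{\ell}}_{t_0+h}-\bm{\ell}_0\big\|_{\bH^{-1}(\cS,Q)}=h\,\|\bm{v}\|_{\bH^{-1}(\cS,Q)}+o(h).
\end{equation*}
Hence the limit in Definition \ref{SD} exists whenever $\bm{v}\neq 0$ and equals $2\,\bip{\bm{\ell}_0,\bm{v}}_{\bL^2(\cS,Q)}\big/\|\bm{v}\|_{\bH^{-1}(\cS,Q)}$, a quantity depending on the competitor only through its velocity $\bm{v}$.

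Finally I would minimise this ratio over $\bm{v}\in\mathrm{Range}(\cK)\setminus\{0\}$. By (\ref{D11}) (applied with the roles of the two functions reversed, and to $-\bm{v}$), $\bip{\bm{\ell}_0,\bm{v}}_{\bL^2(\cS,Q)}\ge-\|\bm{\ell}_0\|_{\bH^1(\cS,Q)}\,\|\bm{v}\|_{\bH^{-1}(\cS,Q)}$, so the ratio is $\ge-2\,\|\bm{\ell}_0\|_{\bH^1(\cS,Q)}$ for every admissible competitor. On the other hand, the curve $(P(t))$ itself has velocity $\bm{v}=\widehat\cK\bm{\ell}_0=\cK\bm{\ell}_0$ by (\ref{A22}) and detailed balance; for this velocity $\bip{\bm{\ell}_0,\cK\bm{\ell}_0}_{\bL^2(\cS,Q)}=-\cE(\bm{\ell}_0,\bm{\ell}_0)=-\|\bm{\ell}_0\|_{\bH^1(\cS,Q)}^2$ by (\ref{A26a}) and (\ref{D5}), whereas $\|\cK\bm{\ell}_0\|_{\bH^{-1}(\cS,Q)}=\|\nabla\bm{\ell}_0\|_{\bL^2(\cZ,C)}=\|\bm{\ell}_0\|_{\bH^1(\cS,Q)}$ by (\ref{D10}) and (\ref{D5}), since $\nabla$ annihilates the additive constant by which $\cK^{-1}\cK\bm{\ell}_0$ and $\bm{\ell}_0$ may differ. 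Thus the actual curve attains the minimal value $-2\,\|\bm{\ell}_0\|_{\bH^1(\cS,Q)}$, which is the asserted steepest-descent property. The only delicate points are bookkeeping: verifying that the admissible velocities are exactly $\mathrm{Range}(\cK)$ — so that $\varrho$ stays finite along competitors and the bound drawn from (\ref{D11}) is not vacuous — and justifying the two $o(h)$ expansions; both are routine in this finite-dimensional setting, and uniqueness of the minimiser, up to a positive rescaling of speed, follows from the equality case in (\ref{D11}).
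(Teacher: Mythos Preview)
Your proof is correct and follows essentially the same route as the paper's. The only difference is cosmetic: the paper parametrizes competitors through a ``driver'' $\psi$ with $\partial\ell^\psi=\cK\psi$ and then applies Cauchy--Schwarz directly in $\bH^1(\cS,Q)$, whereas you work with the tangent vector $\bm v$ itself and invoke the $\bH^1$--$\bH^{-1}$ duality (\ref{D11}); under the substitution $\bm v=\cK\psi$ the two computations are line-by-line identical.
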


  \smallskip

The  proof of this result  needs    Proposition \ref{Proj} below. We pave the way towards it by formulating first a variational version  of Propositions \ref{DE_Traj},  \ref{DE}. For this purpose, we fix an arbitrary time-point $t_0 \in (0, \infty)$   and let $  \psi (\cdot)  = ( \psi (t) )_{t_0 \le t < t_0 + \varepsilon}$ be a continuous curve of real-valued functions on the state-space ${\cal S}$. 

 
With these ingredients, we define a new curve $\, \ell^\psi (\cdot)  = ( \ell^\psi (t) )_{t_0 \le t < t_0 + \varepsilon}$ of such functions, for a suitable $\varepsilon >0,$   by specifying  in the space $\, {\cal L = L_+ (S)}\,$ of subsection \ref{sec2.2}  the initial condition $\ell^\psi (t_0) = {\bm \ell} (t_0) \in {\cal L}$ and the dynamics $\, \partial \ell^\psi (t) = ( \widehat{{\cal K}} \psi ) (t)$ for $\, t \in [t_0, t_0 + \varepsilon);$    in the manner of (\ref{A22}) and a bit more explicitly,  
\begin{equation} 
\label{A22_Psi}
 \partial \ell^\psi (t,x) \,=\, \sum_{y \in {\cal S}}\, \widehat{\kappa} (   x,y) \, \psi (t,y)    \,,\qquad x \in {\cal S}.
\end{equation}
 The  curve $\, \ell^\psi (\cdot)  = ( \ell^\psi (t) )_{t_0 \le t < t_0 + \varepsilon}\,,$ the ``output" of the system (\ref{A22_Psi}) corresponding to the ``input" $  \psi (\cdot)$, is only defined on an interval $\,   [ t_0 , t_0 + \varepsilon)\,$ and lives in the space $\, {\cal L}   ,$ since
$$
\partial \, \sum_{x \in {\cal S}} \,q(x) \,\ell^\psi (t,x)\,=\, \sum_{x \in {\cal S}} \,q(x) \sum_{y \in {\cal S}} \, \widehat{\kappa} (x,y)\, \psi (t,y)\,=\, \sum_{y \in {\cal S}} \sum_{x \in {\cal S}}   \,q(y) \,  \kappa  ( y,x)\,   \psi  (t,y)\,=\,0 
$$
implies $ \sum_{x \in {\cal S}} \,q(x) \,\ell^\psi (t,x) =  \sum_{x \in {\cal S}} \,q(x) \, {\bm \ell} (t_0,x) =1$ for all $ t \in [t_0 + \varepsilon)$. Thus, the recipe 
\begin{equation} 
\label{A22_Prob}
p^\psi (t,x):= q(x) \,\ell^\psi (t,x), \qquad (t,x)  \in [t_0, t_0 + \varepsilon) \times {\cal S}
\end{equation}
procures a curve $\, ( P^\psi (t))_{0 \le t < t_0 + \varepsilon}\,, $ on the manifold $\,{\cal M = P_+ (S)}\,$ in subsection \ref{sec2.1}  consisting of vectors $\, P = \big( p (x) \big)_{x \in {\cal S}}\,$ 
with strictly positive elements and total mass $\, \sum_{x \in {\cal S}} p (x)=1\,.$ 

\smallskip
Conversely: By   irreducibility, the ``input curve" $\, \psi (\cdot)\,$ is   determined by the ``output curve" $\, \ell^\psi (\cdot)\,$ up to an additive time-dependent  constant. In particular, every smooth curve $\, \ell^* (\cdot)  = ( \ell^*  (t) )_{t_0 \le t < t_0 + \varepsilon}$  in $\, {\cal L}\,$ with  $\, \ell^* (t_0) = \ell (t_0)\,$ is representable as $\, \ell^\psi (\cdot)\,$ for a suitable continuous $\, \psi (\cdot)\,$ as above. For instance,     $ \, {\bm \ell} (\cdot) \in {\cal L}\,$ of (\ref{A21}) is the ``output" that corresponds in this manner to the ``input" $ \,   \psi  (\cdot) \equiv {\bm \ell} (\cdot)   $ in (\ref{A22_Psi}), via (\ref{A22}).

We have the following generalization of Proposition \ref{DE}, to which it reduces when $ \,   \psi  (\cdot) \equiv {\bm \ell} (\cdot)  .$

\begin{prop}
\label{Prel_Proj}
In the above context, we have for $\,t \in [t_0 + \varepsilon)\,$ the properties 
$$
\partial\,V \big(P^\psi (t)\,|\,Q\big)= \partial\, \mathbb{E^Q} \Big[ \big(  \ell^\psi \big)^2 \big(t,   X  (t) \big) \Big]= 2 \,\big \langle \psi_t, {\cal K} \ell^\psi_t \big \rangle_{\bL^2(\cS,Q)} = -2 \, {\cal E} \big( \psi_t, \ell^\psi_t \big)  .
$$
Whereas, under the detailed balance conditions (\ref{A24}), this expression becomes 
$$
\partial\,V \big(P^\psi(t)\,|\,Q\big)\,=\,
-2 \, {\cal E} \big( \ell^\psi_t , \psi_t \big)\,=\, -2\, \Bip{  \,  \nabla \ell^\psi_t ,  \nabla \psi_t }_{\bL^2(\cZ,C)}  \,=\, -2\, \Big \langle         \ell^\psi_t ,    \psi_t   \Big \rangle_{\bH^{1}(\cS, Q)}  \,.
$$
\end{prop}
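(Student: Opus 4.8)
The plan is to compute $\partial\, \mathbb{E^Q}\big[(\ell^\psi)^2(t, X(t))\big]$ by applying Proposition \ref{Liggett_Back} in the time-reversed setting, exactly as in the proof of Proposition \ref{DE_Traj}, but now with $g(s,x) := (\ell^\psi)^2(T-s, x)$ for a generic ``output curve'' $\ell^\psi$ governed by \eqref{A22_Psi} rather than by the Backward Equation \eqref{A22}. First I would observe that $\mathbb{E^Q}\big[(\ell^\psi)^2(t, X(t))\big] = \sum_{x \in \cS} q(x)\,(\ell^\psi)^2(t,x) = \big\| {\bm \ell}^\psi_t \big\|_{\bL^2(\cS,Q)}^2$, and since $P^\psi(t) = {\bm \ell}^\psi_t Q$ lies in $\cM$ with $\sum_x q(x)\ell^\psi(t,x) = 1$, this equals $V\big(P^\psi(t)\,|\,Q\big) + 1$ by the definition \eqref{A41b}. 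So the first equality in the statement is immediate, and it remains only to differentiate $\sum_x q(x)(\ell^\psi)^2(t,x)$ in $t$.

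The key computation is the chain rule combined with the dynamics \eqref{A22_Psi}:
\begin{equation*}
\partial \sum_{x \in \cS} q(x)\,(\ell^\psi)^2(t,x) = 2 \sum_{x \in \cS} q(x)\, \ell^\psi(t,x)\, \partial \ell^\psi(t,x) = 2 \sum_{x \in \cS} q(x)\, \ell^\psi(t,x)\, \big(\widehat{{\cal K}} \psi\big)(t,x).
\end{equation*}
By the definition of the $\bL^2(\cS,Q)$ inner product in \eqref{D4}, the right-hand side is $2 \big\langle \ell^\psi_t,\, \widehat{{\cal K}} \psi_t \big\rangle_{\bL^2(\cS,Q)}$, which by the adjoint relationship \eqref{AdjRel} equals $2 \big\langle {\cal K} \ell^\psi_t,\, \psi_t \big\rangle_{\bL^2(\cS,Q)} = 2 \big\langle \psi_t,\, {\cal K}\ell^\psi_t \big\rangle_{\bL^2(\cS,Q)}$; and this is $-2\,{\cal E}(\psi_t, \ell^\psi_t)$ by the definition \eqref{A26a} of the Dirichlet form. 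That establishes the first line. For the detailed-balance case, I would then invoke Lemma \ref{DiscreteGradientToo}: under \eqref{A24} the Dirichlet form is symmetric, so ${\cal E}(\psi_t,\ell^\psi_t) = {\cal E}(\ell^\psi_t,\psi_t) = \big\langle \nabla\ell^\psi_t,\, \nabla\psi_t\big\rangle_{\bL^2(\cZ,C)} = \big\langle \ell^\psi_t,\, \psi_t\big\rangle_{\bH^1(\cS,Q)}$, which is precisely the second displayed line.

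I do not expect a genuine obstacle here; the statement is a bookkeeping generalization of Proposition \ref{DE}, the only substantive inputs being the adjoint identity \eqref{AdjRel} and, for the second part, the symmetrization Lemma \ref{DiscreteGradientToo}. The one point requiring a word of care is the interchange of $\partial_t$ with the finite sum over $\cS$ and the legitimacy of differentiating under the expectation — but since $\cS$ is finite and each $t \mapsto \ell^\psi(t,x)$ is continuously differentiable on $[t_0, t_0+\varepsilon)$ by construction via \eqref{A22_Psi}, this is automatic. (One could alternatively route the argument through a Doob--Meyer decomposition as in Proposition \ref{DE_Traj}, taking $\mathbb{Q}$-expectations of \eqref{A41a} with $\ell$ replaced by $\ell^\psi$ and $\widehat{{\cal K}}\ell$ replaced by $\widehat{{\cal K}}\psi$; but the direct differentiation above is shorter and suffices.)
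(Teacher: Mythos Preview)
Your proposal is correct and matches the paper's own proof essentially line for line: the paper mentions the time-reversed Doob--Meyer route via Proposition~\ref{Liggett_Back} as a possibility, but then gives exactly your ``simpler argument'' of directly differentiating $\|\ell^\psi_t\|^2_{\bL^2(\cS,Q)}$, applying the adjoint relation \eqref{AdjRel}, and reading off the Dirichlet form from \eqref{A26a}. The detailed-balance part is likewise handled by the symmetry in Lemma~\ref{DiscreteGradientToo}, just as you propose.
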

\noindent
{\it Proof:}  A reasoning   similar to that   in Propositions \ref{DE_Traj} and \ref{DE},   and carried out once again in the backwards direction of time, can be deployed by applying  Proposition \ref{Liggett_Back} to  
$\, 
g(s, x) := \big( \ell^\psi \big)^2 (T-s, x)\,, ~~\, 0 \le s \le T,$ $  x \in {\cal S} 
\, $ for arbitrary but fixed $\, T \in (0, t_0 + \varepsilon).$ 
But here is a simpler argument: 
$$
\partial\,V \big(P^\psi (t)\,|\,Q\big)= \,\partial \big\| \ell^\psi_t \big\|^2_{\bL^2(\cS,Q)} = \, 2 \, \big \langle \ell^\psi_t , \widehat{{\cal K}} \psi_t  \big \rangle_{\bL^2(\cS,Q)} = \,2 \,\big \langle  \psi_t ,  {\cal K}  \ell^\psi_t  \big \rangle_{\bL^2(\cS,Q)} =  -2 \, {\cal E} \big( \psi_t, \ell^\psi_t \big),
$$
on account of (\ref{AdjRel}), (\ref{A26a}) and (\ref{A23}). This reasoning proves Proposition \ref{DE} as well. 
 \qed 

\medskip
 
  We compute now    the ``infinitesimal cost  of moving the curve" $\, \big( \ell^\psi (t) \big)_{t_0 \le t < t_0 + \varepsilon}\,. $

\begin{prop}
\label{Proj}
Under the  conditions (\ref{A24}) of detailed balance, we have 
 \begin{equation} 
\label{A49}
\lim_{h \downarrow 0}\, \frac{1}{h}\,   \big \| \,{\bm \ell }_{t+h} -   {\bm \ell }_{t} \,\big \|_{\bH^{-1}(\cS,Q)}
\, = \,\big \|   \,  {\cal K}  \,{\bm \ell }_{t} \, \big \|_{\bH^{-1}(\cS,Q)} \, = \,\big \|   \,  {\bm \ell }_{t} \, \big \|_{\bH^{1}(\cS, Q)} 
\end{equation} 
for every $t \in [t_0, t_0 + \varepsilon); $ and a bit more generally, in the notation just developed, 
 \begin{equation} 
 \label{A49a} 
\lim_{h \downarrow 0}\, \frac{1}{h}\,   \big \| \,  \ell_{t+h}^\psi -    \ell_{t}^{ \psi}  \,\big \|_{\bH^{-1}(\cS,Q)}
\, = \,\big \|   \,  {\cal K}  \,  \psi_{t} \, \big \|_{\bH^{-1}(\cS,Q)} \, = \,\big \|   \,    \psi_{t} \, \big \|_{\bH^{1}(\cS, Q)} \,.
\end{equation} 
\end{prop}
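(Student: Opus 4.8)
The plan is to establish \eqref{A49a} first, since \eqref{A49} is the special case $\psi_t \equiv {\bm \ell}_t$ (recall that ${\bm \ell}(\cdot)$ is the output corresponding to this input, via \eqref{A22}). The second equality in \eqref{A49a} is free: under detailed balance we have $\widehat{\cK} = \cK$, so $\cK\psi_t \in \mathrm{Range}(\cK)$ with $\cK^{-1}(\cK\psi_t) = \psi_t$ (modulo additive constants, which are annihilated by $\nabla$), whence by the definition \eqref{D10} of the $\bH^{-1}$-norm and the identity \eqref{D5}, $\|\cK\psi_t\|_{\bH^{-1}(\cS,Q)} = \|\nabla\psi_t\|_{\bL^2(\cZ,C)} = \|\psi_t\|_{\bH^1(\cS,Q)}$. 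So the real content is the first equality: the difference quotient of the $\bH^{-1}$-distance along the curve converges to $\|\cK\psi_t\|_{\bH^{-1}(\cS,Q)}$.

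For that, I would write $\ell^\psi_{t+h} - \ell^\psi_t = \int_t^{t+h} \partial\ell^\psi_u\,\mathrm{d}u = \int_t^{t+h} (\widehat{\cK}\psi_u)\,\mathrm{d}u$ by \eqref{A22_Psi}, so that $\frac1h(\ell^\psi_{t+h} - \ell^\psi_t) = \frac1h\int_t^{t+h}\cK\psi_u\,\mathrm{d}u$ under detailed balance. Since $u \mapsto \psi_u$ is continuous (a standing hypothesis on the input curve) and $\cK$ is a fixed linear operator on the finite-dimensional space of functions on $\cS$, the map $u \mapsto \cK\psi_u$ is continuous; moreover $\cK\psi_u \in \mathrm{Range}(\cK)$ for every $u$, so all these vectors have finite $\bH^{-1}$-norm and, because $\mathrm{Range}(\cK)$ is a fixed finite-dimensional subspace on which $\|\cdot\|_{\bH^{-1}(\cS,Q)}$ is an honest norm (hence continuous), we get $\frac1h\int_t^{t+h}\cK\psi_u\,\mathrm{d}u \to \cK\psi_t$ in that norm as $h\downarrow 0$. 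The triangle inequality for $\|\cdot\|_{\bH^{-1}(\cS,Q)}$ then yields $\big|\,\frac1h\|\ell^\psi_{t+h}-\ell^\psi_t\|_{\bH^{-1}(\cS,Q)} - \|\cK\psi_t\|_{\bH^{-1}(\cS,Q)}\,\big| \le \big\|\frac1h(\ell^\psi_{t+h}-\ell^\psi_t) - \cK\psi_t\big\|_{\bH^{-1}(\cS,Q)} \to 0$, which is exactly \eqref{A49a}. Specializing $\psi \equiv {\bm \ell}$ gives \eqref{A49}.

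The one point that needs a little care — and which I expect to be the main (minor) obstacle — is the claim that on the finite-dimensional subspace $\mathrm{Range}(\cK)$ the functional $\|\cdot\|_{\bH^{-1}(\cS,Q)}$ is genuinely finite-valued and continuous, so that the vector-valued Riemann integral commutes with taking the norm in the limit. This follows because $\cK^{-1}$ is well-defined and linear on $\mathrm{Range}(\cK)$ (its kernel, the constants, being killed by the subsequent $\nabla$), and $\nabla$ composed with it is a linear map into the finite-dimensional Hilbert space $\bL^2(\cZ,C)$; any linear map between finite-dimensional normed spaces is continuous. One should also note that $\ell^\psi_{t+h} - \ell^\psi_t$ indeed lies in $\mathrm{Range}(\cK)$ (it is an average of such vectors, and $\mathrm{Range}(\cK)$ is a subspace), so the left-hand side of \eqref{A49a} is finite for all small $h$ and the difference quotient makes sense. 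Everything else is a routine consequence of the fundamental theorem of calculus applied coordinatewise and the continuity of $\psi(\cdot)$.
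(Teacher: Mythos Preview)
Your proof is correct and follows essentially the same approach as the paper's: both argue that the difference quotient converges to $\cK\psi_t$ (the paper via pointwise differentiation of \eqref{A22_Psi}, you via the integral representation and continuity of $u\mapsto\cK\psi_u$), then identify $\|\cK\psi_t\|_{\bH^{-1}(\cS,Q)}$ with $\|\psi_t\|_{\bH^1(\cS,Q)}$ using the definition \eqref{D10} of the $\bH^{-1}$-norm (equivalently, the variational characterization \eqref{D12}). Your version is in fact more carefully written than the paper's, which simply declares the first equality in \eqref{A49a} ``evident'' after noting coordinatewise convergence; you spell out why coordinatewise convergence on the finite-dimensional subspace $\mathrm{Range}(\cK)$ transfers to convergence in the $\bH^{-1}$-norm.
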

\noindent
{\it Proof:} From (\ref{A22_Psi}), (\ref{A24})  it follows that  for every  $\, x \in {\cal S}\,  $ we have 
 $$
 \lim_{h \downarrow 0}\,\frac{1}{\,h\,} \Big[ \ell^{ \psi}_{t+h}  (x) - \ell^{ \psi}_{t }  ( x)  \Big] \,=\, \big(  {\cal K}  \,  \psi_t \big)  ( x)  ,  
 $$
 \noindent
so the first equality in (\ref{A49a}) is evident.   For the second equality in (\ref{A49a})  it suffices to recall (\ref{D10})--(\ref{D12}),   observe   that $\, \nabla \psi_t\,$ is the unique element $\, F \in \bL^2(\cZ,C)
\,$ with the property $\, \nabla \cdot F =  {\cal K}  \,\psi_t\,,$ and   note from Remark \ref{Rem3.1} the isometry 
$\,
\big \| F \big \|_{\bL^2(\cZ,C)} =  \big \| \psi_t \big \|_{\bH^1(\cS,Q)}
\,$
from the space 
$\bL^2(\cZ,C)$ to $\bH^1(\cS, Q)$.

Now, (\ref{A49}) is just a special case of (\ref{A49a}) with $ \,   \psi  (\cdot) \equiv {\bm \ell} (\cdot)   ,$  as discussed above. \qed

\subsection{The Proof of Theorem \ref{Steep_Desc_Var} }
\label{justitia}

We are ready  to tackle the proof of Theorem \ref{Steep_Desc_Var}. Along {\it any} smooth  curve of the form $\, ( P^\psi (t))_{t_0 \le t < t_0 + \varepsilon}\,$ created as in (\ref{A22_Psi}), (\ref{A22_Prob})   on the manifold of probability vectors $\,{\cal M = P_+(S)}  $ and with $\ell^\psi (t_0) = {\bm \ell} (t_0) \in {\cal L}$, we have from Propositions  \ref{Prel_Proj},  \ref{Proj}  the respective rates for the variance and the metric distance,  under detailed balance: 
$$
\lim_{h \downarrow 0} \frac{\, V \big(P^\psi (t_0 +h)\,|\,Q\big) - V \big(P (t_0)\,|\,Q\big)}{h} \,=\, -2\, \Big \langle    \,    {\bm \ell }_{t_0} ,    \psi_{t_0}   \Big \rangle_{\bH^{1}(\cS, Q)}  \,,
$$
$$
\lim_{h \downarrow 0} \, \frac{\,\varrho \big( P^\psi (t_0+h), P (t_0) \big)\,}{h} \,=\, \big \|       \psi_{t_0}   \big \|_{\bH^{1}(\cS, Q)} \,.
$$
Thus, the rate of change for the variance  along  the  {\it perturbed curve} $\,  ( P^\psi (t)  )_{t_0 \le t < t_0 + \varepsilon}\,,$ when measured on the manifold ${\cal M}$ by the metric distance in (\ref{Metric_Dist}), is 
$$
\lim_{h \downarrow 0} \frac{\, V \big(P^\psi (t_0 +h)\,|\,Q\big) - V \big(P   (t_0)\,|\,Q\big)}{\,\varrho \big( P^\psi (t_0+h), P (t_0) \big)\, } \,=\, -2\, \,\bigg \langle    \,    {\bm \ell }_{t_0} ,  \frac{  \psi_{t_0}  }{\, \big \|       \psi_{t_0}   \big \|_{\bH^{1}(\cS, Q)} \,}   \bigg \rangle_{\bH^{1}(\cS, Q)} \,.
$$

On the other hand, along the {\it original curve} $\,  ( P (t)  )_{0 \le t < \infty}\,$ of time-marginal distributions for the Chain   (that is, with $    \psi  (\cdot) \equiv {\bm \ell} (\cdot)   $ modulo an  affine  transformation, as noted above), the rate of variance dissipation  measured in terms of the metric distance traveled   on  the manifold ${\cal M}\,$ is 
$$
\lim_{h \downarrow 0} \frac{\, V \big(P  (t_0 +h)\,|\,Q\big) - V \big(P  (t_0)\,|\,Q\big)}{\,\varrho \big( P  (t_0+h), P  (t_0) \big)\, } \,=\, -2\, \,\big \|    \,   {\bm \ell }_{t_0}   \big \|_{\bH^{1}(\cS, Q)} \,<\,0\,.
$$
A simple comparison of the last two displays, via \textsc{Cauchy-Schwarz}, gives   the {\it steepest descent  property of the variance with respect  to the metric distance in   
(\ref{Metric_Dist}),} i.e., 
$$
\lim_{h \downarrow 0} \frac{\, V \big(P^\psi (t_0 +h)\,|\,Q\big) - V \big(P   (t_0)\,|\,Q\big)}{\,\varrho \big( P^\psi (t_0+h), P (t_0) \big)\, } \,- \,\lim_{h \downarrow 0} \frac{\, V \big(P  (t_0 +h)\,|\,Q\big) - V \big(P  (t_0)\,|\,Q\big)}{\,\varrho \big( P  (t_0+h), P  (t_0) \big)\, }  
$$
$$
= \, 2 \left( \,\big \|       {\bm \ell }_{t_0}   \big \|_{\bH^{1}(\cS, Q)} - \bigg \langle    \,    {\bm \ell }_{t_0} ,  \frac{  \psi_{t_0}  }{\, \big \|       \psi_{t_0}   \big \|_{\bH^{1}(\cS, Q)} \,}   \bigg \rangle_{\bH^{1}(\cS, Q)} \right) \,\ge \, 0\,,
$$
{\it along the original curve $\, ( P(t))_{0 \le t < \infty}\,$ of time-marginals for the \textsc{Markov} Chain.} Equality holds here  if, and only if, $\,c+\psi_{t_0}$ is a   positive   constant multiple of $\,{\bm \ell }_{t_0}$ for some    $\,c \in \R\,.$ \qed

\medskip
We  will revisit  this theme in Sections  \ref{sec7} and \ref{sec8}.

\section{The Relative Entropy Process}
\label{sec6}

For an arbitrary probability vector $\, P =  ( p (x)  )_{x \in {\cal S}}\,$ with strictly positive elements,  let us recall the definition of  its {\it relative entropy}, or {\sc Kullback--Leibler} \emph{divergence},  
\begin{equation} 
\label{H1}
 H (P\,|\,Q) \,:= \,\sum_{x \in {\cal S}}\, p(x) \log \Big( \frac{\,p (x) \,}{ q (x)} \Big) 
 \end{equation}
with respect to the invariant distribution $\, Q =  ( q (x)  )_{x \in {\cal S}}\,$ of (\ref{A18}). In terms of the likelihood function in (\ref{A21}),   the  relative entropy of the probability vector $P(t)$ in (\ref{A19}) with respect  to $\,Q,$    is 
\begin{equation} 
\label{A29}
H \big( P(t) \, \big| \, Q \big) \, =\,   \mathbb{E^P} \Big[  \log \ell \big( t, X(t) \big) \Big] , \qquad 0 \le t < \infty\,,
\end{equation}
the $\p-$expectation of the log-likelihood at time $t$. We shall see presently that this function
\begin{equation} 
\label{A30}
t \, \longmapsto \,H \big( P(t) \, \big| \, Q \big) ~~\text{is non-negative, and satisfies} ~\, \lim_{t \to \infty} \downarrow H \big( P(t) \, \big| \, Q \big)  =0\,.
\end{equation}
In other words, the relative entropy functional of (\ref{H1}) is   a \textsc{Lyapunov} function for the   curve $   ( P(t)  )_{0 \le t < \infty} $ of time-marginal distributions for our continuous-time \textsc{Markov} Chain. We shall compute in  subsection \ref{de Bruijn} the   rate of   temporal  decrease for the function in (\ref{A30}). Of course, all this    is in accordance with general thermodynamic principles  governing the approach to equilibrium in  physical systems (e.g., Chapter 2 in \textsc{Cover \& Thomas} (1991)  in the discrete-time \textsc{Markov} Chain context of our Section \ref{sec1}).

Let us note also, that  the relative entropy in (\ref{A29}) can be cast equivalently  as the $\mathbb{Q}-$expectation 
\begin{equation} 
\label{A35}
H \big( P(t) \, \big| \, Q \big) \,=\, \sum_{y \in {\cal S}} \, q ( y) \,\frac{\,p(t,y)\,}{q(y)}\, \log \left( \frac{\,p(t,y)\,}{q(y)}  \right)\,=\, \mathbb{E^Q} \Big[   \ell \big( t, X(t) \big)
   \log  \ell \big( t, X(t) \big)   \Big] 
\end{equation}
of the {\it relative entropy process}
$\,
\ell \big( t, X(t) \big) \cdot  \log  \ell \big( t, X(t) \big)\,, ~\, 0 \le t < \infty\,.
$
This allows us to justify the first claim in (\ref{A30}), regarding  non-negativity. Indeed, the   convexity of the function $\, (0, \infty) \ni \ell \mapsto \Phi (\ell) := \ell \, \log \ell\,\,$ gives, on the strength of the \textsc{Jensen} inequality, 
\begin{equation} 
\label{A35*}
H \big( P(t) \, \big| \, Q \big) \,=\, \mathbb{E^Q} \big[  \Phi \big( \ell \big( t, X(t) \big) \big)  \big]\, \ge \, \Phi \Big( \mathbb{E^Q} \big[ \ell \big( t, X(t)   \big)  \big] \Big)\,=\, f (1) \,=\, 0\,.
\end{equation}
 Alternatively, this follows from $\,H \big( P(t) \, \big| \, Q \big) = \mathbb{E^Q}  [  \Psi \big( \ell   ( t, X(t)  )  )   ] ,$ with $\Psi \ge 0$ as   in (\ref{A52d}) below.

 \begin{prop}
 \label{FJ_cor}
In the context of Proposition \ref{FJ}, the time-reversed relative entropy process 
\begin{equation} 
\label{A40a}
  \ell \big( T-s,  \widehat{X}  (s) \big) \cdot \log \ell \big( T-s,  \widehat{X}  (s) \big) , ~~~~0 \le s \le  T\,~~~~~~~\text{is a}~\, \big( \widehat{\mathbb{G}}, \mathbb{Q}\big)- \text{submartingale;}
\end{equation}
the properties in (\ref{A30}) hold; and     the time-reversed  log-likelihood   process
\begin{equation} 
\label{A40b}
  \log \ell \big( T-s,  \widehat{X}  (s) \big) , ~~~~0 \le s \le  T  ~~~~~~~\text{is a}~\, \big( \widehat{\mathbb{G}}, \mathbb{P}\big)- \text{submartingale.}
\end{equation}
 \end{prop}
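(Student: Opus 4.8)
The plan is to settle the three assertions in order, taking Proposition \ref{FJ} as the point of departure and exploiting the convexity of $\Phi(\ell) := \ell\log\ell$, together with a change-of-measure (Bayes) argument for the last assertion. First, for the submartingale property (\ref{A40a}): by Proposition \ref{FJ} the time-reversed likelihood process $L(T-s) = \ell(T-s,\widehat{X}(s))$, $0\le s\le T$, is a $\big(\widehat{\mathbb{G}},\mathbb{Q}\big)$-martingale; since $\cS$ is finite and $t\mapsto p(t,x)$ is continuous and strictly positive on the compact interval $[0,T]$, this process takes values in some $[c,C]$ with $0<c\le C<\infty$, so $\Phi(L(T-s))$ is bounded and hence $\mathbb{Q}$-integrable. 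Applying the conditional Jensen inequality to the convex $\Phi$ then yields at once that $\Phi\big(L(T-s)\big) = \ell(T-s,\widehat{X}(s))\log\ell(T-s,\widehat{X}(s))$ is a $\big(\widehat{\mathbb{G}},\mathbb{Q}\big)$-submartingale, which is precisely (\ref{A40a}).

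Next, for the Lyapunov properties (\ref{A30}): non-negativity of $H\big(P(t)\,|\,Q\big)$ has already been recorded in (\ref{A35*}). For the monotonicity I would take $\mathbb{Q}$-expectations in the submartingale just obtained. Since $\mathbb{E^Q}\big[\Phi(\ell(T-s,\widehat{X}(s)))\big] = \mathbb{E^Q}\big[\Phi(\ell(T-s,X(T-s)))\big] = H\big(P(T-s)\,|\,Q\big)$ by (\ref{A35}), the submartingale property forces $s\mapsto H\big(P(T-s)\,|\,Q\big)$ to be non-decreasing on $[0,T]$, i.e.\ $t\mapsto H\big(P(t)\,|\,Q\big)$ to be non-increasing; as $T\in(0,\infty)$ is arbitrary, this monotonicity holds throughout $[0,\infty)$. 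Being non-negative and non-increasing, $H\big(P(t)\,|\,Q\big)$ converges as $t\to\infty$; and since $\ell(t,y) = p(t,y)/q(y)\to 1$ for every $y\in\cS$ by (\ref{A20a}), the finiteness of $\cS$ gives $H\big(P(t)\,|\,Q\big)\to 0$, completing (\ref{A30}).

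Finally, for the submartingale property (\ref{A40b}), where the reference measure is now $\mathbb{P}$, I would invoke the Bayes rule exactly as in the derivation of (\ref{A41c}). The key observation is that $L(T-s) = \ell(T-s,\widehat{X}(s))$ is the Radon--Nikodym density of $\mathbb{P}$ with respect to $\mathbb{Q}$ not merely on $\sigma(X(T-s))$, as stated in (\ref{A34}), but on the whole reversed $\sigma$-algebra $\widehat{{\cal G}}(s) = \sigma\big(X(v):T-s\le v\le T\big)$: indeed $\mathbb{P}$ and $\mathbb{Q}$ differ only through the initial distribution, $Q$ is invariant, and the transition law from time $T-s$ onwards is common to both, whence $\frac{\ud\mathbb{P}}{\ud\mathbb{Q}}\big|_{\widehat{{\cal G}}(s)} = p(T-s,X(T-s))/q(X(T-s)) = L(T-s)$; equivalently $1/L(T-s)$ is the $\big(\widehat{\mathbb{G}},\mathbb{P}\big)$-martingale $\frac{\ud\mathbb{Q}}{\ud\mathbb{P}}\big|_{\widehat{{\cal G}}(s)}$. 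Since $L(T-s)$ is strictly positive and a genuine $\mathbb{Q}$-martingale, a process $Y(\cdot)$ is a $\big(\widehat{\mathbb{G}},\mathbb{P}\big)$-submartingale if and only if $L(T-\cdot)\,Y(\cdot)$ is a $\big(\widehat{\mathbb{G}},\mathbb{Q}\big)$-submartingale; taking $Y(s) = \log\ell(T-s,\widehat{X}(s))$ (again bounded, as $L(T-s)$ is bounded away from $0$), the product $L(T-s)Y(s)$ is exactly the relative-entropy process shown to be a $\mathbb{Q}$-submartingale in (\ref{A40a}), so $\log\ell(T-s,\widehat{X}(s))$ is a $\big(\widehat{\mathbb{G}},\mathbb{P}\big)$-submartingale. (Alternatively, apply conditional Jensen directly to the convex map $x\mapsto-\log x$ and the $\mathbb{P}$-martingale $1/L(T-s)$.)

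The computations are elementary; the one place demanding care is this last step, namely the identification of $L(T-s)$ as the density of $\mathbb{P}$ with respect to $\mathbb{Q}$ on the \emph{reversed} filtration $\widehat{\mathbb{G}}$ (rather than just on $\sigma(X(T-s))$) and the verification that it is a true, not merely local or super-, martingale, so that the Bayes rule transports the submartingale property in the intended direction. Everything else follows from Proposition \ref{FJ} and the convexity of $\ell\log\ell$.
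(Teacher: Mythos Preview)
Your proof is correct and follows essentially the same route as the paper: conditional \textsc{Jensen} applied to the convex $\Phi(\ell)=\ell\log\ell$ and the $(\widehat{\mathbb{G}},\mathbb{Q})$-martingale of Proposition~\ref{FJ} for (\ref{A40a}); taking $\mathbb{Q}$-expectations plus (\ref{A20a}) and finiteness of $\cS$ for (\ref{A30}); and the \textsc{Bayes} rule for (\ref{A40b}). You supply more detail than the paper does---in particular the identification of $L(T-s)$ as the density $\ud\mathbb{P}/\ud\mathbb{Q}$ on the full reversed $\sigma$-algebra $\widehat{\cG}(s)$, and the alternative argument via $-\log$ applied to the $\mathbb{P}$-martingale $1/L(T-s)$---but the underlying strategy is identical.
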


 \noindent
 {\it Proof:} 
The first claim  follows from (\ref{A40}) and the convexity of the function $\,  \Phi (\ell)  = \ell \, \log \ell\,\,$ appearing inside the expectation in (\ref{A35}), along with the \textsc{Jensen} inequality. The $\,\mathbb{Q}-$expectation 
\begin{equation} 
\label{A40d}
H \big( P (T-s) \,| \,Q)\,=\, \mathbb{E^Q} \big[ \Phi  \big( \ell \big( T-s, \widehat{X} (s) \big) \big) \big]\,,~~~~~~ ~ 0 \le s \le T 
\end{equation}
 of the process in (\ref{A40a})   is thus increasing. This is precisely the monotonicity  in (\ref{A30}); the remaining claim 
\begin{equation} 
\label{A30'}
\, \lim_{t \to \infty} \downarrow H \big( P(t) \, \big| \, Q \big)  =0\,
\end{equation}
 there,  follows now from (\ref{A20a}),  (\ref{A35}), and the finiteness of $\, {\cal S}.$ The   claim of (\ref{A40b})  is a consequence of  (\ref{A40a}), (\ref{A34}), and the familiar \textsc{Bayes} rule  (Lemma 3.5.3 in \textsc{Karatzas \& Shreve} (1988)).  \qed

\subsection{Trajectorial  Relative Entropy Dissipation}
\label{Traject}

We read now Proposition \ref{Liggett_Back} with $\,\Phi (\ell)  = \ell \, \log \ell\,$ and the function
\begin{equation} 
\label{A43}
h (s,x) \,=\, \Phi \big( \ell (T-s, x) \big)\,, \qquad 0 \le s \le T, ~ ~x \in {\cal S}\,.
\end{equation} 
\noindent
As   argued in   Propositions \ref{FJ}  and   \ref{FJ_cor},  the ``time-reversed  relative entropy" 
$ H \big( P (T-s) \,| \,Q) = \mathbb{E^Q} \big[ h \big( s, \widehat{X}(s) \big) \big] ,$ $ 0 \le s \le T \,$ is   increasing; and 
 \begin{equation} 
\label{A45}
\widehat{M}^{ h} (s) \, :=  \, h \big( s,\widehat{X}(s) \big) - \int_0^s \big( \partial h + \widehat{{\cal K}} h \big) \big(u, \widehat{X} (u) \big)\, \ud u \,, \qquad 0 \le s  \le T 
\end{equation}
 is a $\mathbb{Q}-$local-martingale of the time-reversed   filtration $\,\widehat{\mathbb{G}}\,.$ The integrand in (\ref{A45}) is straightforward to compute: from (\ref{A22}), (\ref{A23}), and with $t = T-s$ for notational convenience,  we get 
$$
 \partial h (s,x)  =   - \big( 1 + \log \ell (t, x) \big) \, \big( \widehat{{\cal K}} \, \ell \big) (t,x) = -\big( 1 + \log \ell (t, x) \big) \,
\sum_{y \in {\cal S}}  \, \widehat{\kappa} ( x, y)  \, \ell (t, y),\quad \text{thus}
$$ 
\begin{equation} 
\label{A47}
\big( \partial h + \widehat{{\cal K}} h \big) (s,x) \,=\,  \sum_{y \in {\cal S}}  \, \widehat{\kappa} ( x, y)\, \ell (t, y) \Big[ \log \frac{\, \ell (t, y)\,}{\ell (t, x)} -   1  \Big] = \,\ell (t,x) \sum_{y \in {\cal S} \atop y \neq x }  \,   \widehat{\kappa} ( x, y)   \,   \Psi \Big( \frac{\, \ell (t, y)\,}{\ell (t, x)} \Big) \ge 0\,. 
\end{equation}
Here the  function 
\begin{equation} 
\label{A52d}
  \Psi (r)\, :=\, r \log r -r +1\,,~~~~~~~r>0  
 \end{equation}
  \noindent
 is nonnegative, convex, and  attains its minimum $\Psi (1)=0$ at $r=1$. We have used in the last equality  of (\ref{A47}) the property $\, \sum_{y \in {\cal S}} \,\widehat{\kappa} ( x,  y )=0\,$   for every $x \in {\cal S} .$

 \begin{prop}
 \label{DM}
 The submartingales of (\ref{A40a}), (\ref{A40b}) admit   the respective \textsc{Doob-Meyer} decompositions 
 \begin{equation} 
\label{A50a}
 \ell \big( T-s,  \widehat{X}  ( s) \big) \log \big( \ell \big( T-s,  \widehat{X}  ( s) \big) \big)\,=\, \,\widehat{M}^{ \,h} (s) + \int_0^s \lambda^{\mathbb{Q}} (u) \, \ud u\,, \qquad 0 \le s \le T \,,
\end{equation}
\begin{equation} 
\label{A50b}
  \log \big( \ell \big( T-s,  \widehat{X}  ( s) \big) \big)\,=\, \,\widehat{N}^{\, h} (s) +   \int_0^s \lambda^{\mathbb{P}} (u) \, \ud u\,, \qquad 0 \le s \le T\,,
\end{equation}
   in the notation of (\ref{A47}), (\ref{A52d}), with  $\,\lambda^{\mathbb{Q}} (s) =\Lambda^{\mathbb{Q}} \big( T-s, \widehat{X}(s)\big) \,,$ $\,\lambda^{\mathbb{P}} (s) =\Lambda^{\mathbb{P}} \big( T-s, \widehat{X}(s)\big) $ and 
\begin{equation} 
\label{A52a}
\Lambda^{\mathbb{P}} (t,x) \,:=\, \sum_{y \in {\cal S}, \, y \neq x }   \, \widehat{\kappa} ( x, y)\, \Psi  \Big( \frac{\, \ell (t, y)\,}{\ell (t, x)} \Big) \ge 0
\,,  \qquad 
\Lambda^{\mathbb{Q}} (t,x) \,:=\,  \ell (t,x) \,\Lambda^{\mathbb{P}} (t,x) \ge 0
\,.
\end{equation}
Here  $\,\widehat{M}^{ \,h}   \,$  is the process of (\ref{A45}) and a  $\big( \widehat{\mathbb{G}}, \mathbb{Q}\big)-$martingale, whereas $\,\widehat{N}^{ \,h}   \,$ is a $\big( \widehat{\mathbb{G}}, \mathbb{P}\big)-$martingale. 
 \end{prop}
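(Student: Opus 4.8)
The statement has two halves. The decomposition \eqref{A50a} for the relative entropy process is essentially already in hand from the discussion preceding Proposition~\ref{DM}; the decomposition \eqref{A50b} for the log--likelihood process then follows from \eqref{A50a} by a change--of--measure (Bayes) argument, exactly as \eqref{A41c} was deduced from \eqref{A41a} in Section~\ref{sec5}.

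\textbf{The decomposition \eqref{A50a}.} The plan is to apply Proposition~\ref{Liggett_Back} to the function $h(s,x)=\Phi\big(\ell(T-s,x)\big)$ of \eqref{A43}. Its hypothesis holds because, by \eqref{A19}, each map $t\mapsto \ell(t,x)$ is strictly positive and continuously differentiable, so $s\mapsto \partial h(s,x)$ is continuous. The resulting $\big(\widehat{\mathbb{G}},\mathbb{Q}\big)$--local martingale is precisely $\widehat{M}^{\,h}$ of \eqref{A45}, and its compensator density $(\partial h+\widehat{{\cal K}}h)(s,x)$ was computed in \eqref{A47} to equal $\ell(t,x)\sum_{y\neq x}\widehat{\kappa}(x,y)\,\Psi\big(\ell(t,y)/\ell(t,x)\big)=\Lambda^{\mathbb{Q}}(t,x)$ with $t=T-s$, in the notation of \eqref{A52a}. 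To upgrade $\widehat{M}^{\,h}$ to a true $\mathbb{Q}$--martingale I would note that on the compact interval $[0,T]$ both $(t,x)\mapsto\ell(t,x)$ and $(t,x)\mapsto\Phi(\ell(t,x))$ are bounded (and $\ell$ bounded below by a positive constant), by uniform continuity of $t\mapsto p(t,x)$ and the finiteness of ${\cal S}$; hence $\widehat{M}^{\,h}$ is bounded on $[0,T]$, thus a genuine martingale. This gives \eqref{A50a} and re-proves the submartingale claim \eqref{A40a}.

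\textbf{The decomposition \eqref{A50b}.} The key observation is that $Z_s:=\ell\big(T-s,\widehat{X}(s)\big)=L(T-s)$, which by Proposition~\ref{FJ} is a strictly positive $\big(\widehat{\mathbb{G}},\mathbb{Q}\big)$--martingale, is the likelihood (density) process of $\mathbb{P}$ relative to $\mathbb{Q}$ along $\widehat{\mathbb{G}}$: indeed $\widehat{{\cal G}}(T)={\cal F}^X(T)$, on which $\mathrm{d}\mathbb{P}/\mathrm{d}\mathbb{Q}=\ell(0,X(0))=Z_T$ since $\mathbb{P}$ and $\mathbb{Q}$ differ only through the initial law, and $\mathbb{E}^{\mathbb{Q}}[Z_T\,|\,\widehat{{\cal G}}(s)]=Z_s$. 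By the Bayes rule (Lemma~3.5.3 in \textsc{Karatzas \& Shreve} (1988)), a process $Y$ is a $\big(\widehat{\mathbb{G}},\mathbb{P}\big)$--martingale iff $YZ$ is a $\big(\widehat{\mathbb{G}},\mathbb{Q}\big)$--martingale. I would apply this with $Y_s:=\widehat{N}^{\,h}(s)=\log Z_s-A_s$, where $A_s:=\int_0^s \Lambda^{\mathbb{P}}\big(T-u,\widehat{X}(u)\big)\,\mathrm{d}u$. Using \eqref{A50a} together with $\Lambda^{\mathbb{Q}}=\ell\cdot\Lambda^{\mathbb{P}}$ from \eqref{A52a}, one gets $Z_s\log Z_s=\widehat{M}^{\,h}(s)+\int_0^s Z_u\,\mathrm{d}A_u$; since $A$ is absolutely continuous (hence continuous of finite variation, with trivial covariation with $Z$), integration by parts yields $Z_sA_s=\int_0^s Z_u\,\mathrm{d}A_u+\int_0^s A_u\,\mathrm{d}Z_u$, and therefore $Y_sZ_s=\widehat{M}^{\,h}(s)-\int_0^s A_u\,\mathrm{d}Z_u$. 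Both terms on the right are $\big(\widehat{\mathbb{G}},\mathbb{Q}\big)$--local martingales, and — by boundedness of $A$ and of $Z$ (away from $0$ and from $\infty$) on $[0,T]$ — true martingales; so $YZ$ is a $\big(\widehat{\mathbb{G}},\mathbb{Q}\big)$--martingale, whence $\widehat{N}^{\,h}=Y$ is a $\big(\widehat{\mathbb{G}},\mathbb{P}\big)$--martingale. As $A$ is increasing, \eqref{A50b} is the asserted Doob--Meyer decomposition, which re-establishes \eqref{A40b}.

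\textbf{Main obstacle.} The delicate point is \eqref{A50b}: under $\mathbb{P}$ the reversed process $\widehat{X}$ is not time-homogeneous Markov (cf.\ \eqref{A38}--\eqref{A38'}), so Proposition~\ref{Liggett_Back} cannot be invoked directly under $\mathbb{P}$ to read off the $\mathbb{P}$--compensator of $\log Z_s$; it must be extracted indirectly from the $\mathbb{Q}$--decomposition \eqref{A50a} via the change of measure, and the exact cancellation in the integration-by-parts step hinges precisely on the proportionality $\Lambda^{\mathbb{Q}}(t,x)=\ell(t,x)\,\Lambda^{\mathbb{P}}(t,x)$ of \eqref{A52a}. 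Care is also needed with integrability in passing from local to true martingales, though on the finite horizon $[0,T]$ this reduces to the elementary boundedness estimates recorded above.
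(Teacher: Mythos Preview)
Your proposal is correct and follows essentially the same route as the paper: boundedness (from uniform continuity of $t\mapsto p(t,x)$ and finiteness of $\cS$) upgrades the local martingale $\widehat{M}^{\,h}$ of \eqref{A45} to a true $(\widehat{\mathbb{G}},\mathbb{Q})$--martingale, giving \eqref{A50a}; and \eqref{A50b} is then deduced from \eqref{A50a} via the \textsc{Bayes} rule. The paper's proof compresses the second step into a single sentence (``the remaining claims follow from the \textsc{Bayes} rule''), whereas you spell out explicitly the integration-by-parts computation showing that $\widehat{N}^{\,h}\cdot Z$ is a $(\widehat{\mathbb{G}},\mathbb{Q})$--martingale --- a welcome clarification, since the cancellation that makes this work relies precisely on the relation $\Lambda^{\mathbb{Q}}=\ell\cdot\Lambda^{\mathbb{P}}$ of \eqref{A52a}.
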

 \noindent
 {\it Proof:} Let us take   a look at the expressions of (\ref{A43})--(\ref{A47}). We have already noted that  each function $ [0,T] \ni t \mapsto p (t,x) \in (0,1)\,$ is uniformly continuous. This fact, along with the finiteness of the state space $  {\cal S} ,$ implies that the quantities in (\ref{A43}), (\ref{A47}) are actually uniformly bounded. This  implies a similar boundedness for the $\, \big( \widehat{\mathbb{G}}, \mathbb{Q}\big)-$local martingale in (\ref{A45}), which is thus seen to be a true $\, \big( \widehat{\mathbb{G}}, \mathbb{Q}\big)-$martingale. The remaining claims follow  from  the \textsc{Bayes} rule. \qed

\medskip
 The decomposition (\ref{A50a}) is a trajectorial version of   relative entropy dissipation. This manifests itself at the level of the {\it individual particles} that undergo  the \textsc{Markov} Chain motion  viewed under the lens of the probability measure $\mathbb{Q}$ and under time-reversal, rather than only at the level of their ensembles. 

 \smallskip
 We note   that the first quantity of  (\ref{A52a}) provides 
the  exact  rate of relative entropy dissipation, in the sense that for every $\,0 < t < T < \infty \,$ we have the following convergence, a.e.\,and  in $\, \mathbb{L}^1 (\mathbb{P})$:
\begin{equation} 
\label{A52c}
\lim_{s \uparrow T-t}  \frac{1}{\, T - t -s\,} \, \bigg( \mathbb{E^P}\Big[ \log \ell \big( t , X(t) \big) \, \Big| \,\widehat{\mathcal{G}} (s) \Big]  -  \log \big( \ell \big( T-s,  \widehat{X}  ( s) \big) \big) \bigg)  =\,\Lambda^{\mathbb{P}} \big( t,  X  ( t) \big) .
\end{equation}
  The decomposition (\ref{A50b})  and the trajectorial rate (\ref{A52c}) are exact analogues of those   in Theorem 4.1 and Proposition 4.5 of \textsc{Karatzas, Schachermayer \& Tschiderer} (2020). They constitute  trajectorial versions of   relative entropy dissipation,   viewed now under   the original  probability measure $\mathbb{P}$  --- and again  under time-reversal.

\subsection{\textsc{de Bruijn}-Type Identities}
\label{de Bruijn}

 \smallskip
 With this preparation,  we  are now in a position to recover   the precise rate of decay for the relative entropy function in (\ref{A29}); cf.\,\textsc{Diaconis \& Saloff-Coste} (1996), Lemma 2.5.  All this takes, is to ``aggregate"  in (\ref{A50a}) by taking $\mathbb{Q}-$expectations.  This leads to an analogue of equation (4.14) in \textsc{Karatzas, Schachermayer \& Tschiderer} (2020),    as we describe now.  \footnote{~ The seminal paper  \textsc{Stam}  (1959), from the early days of Information Theory, establishes the first    identity of this type, and  in a context where   the invariant measure $Q$ is standard Gaussian. \textsc{A.J.\,Stam}  gives   credit for   this result to his teacher, the analyst, number theorist, combinatorialist and logician Nicolaas   \textsc{de\,Bruijn}.}

\begin{thm}
\label{deBruijn}
{\bf  \textsc{de Bruijn}-type identity for the Dissipation of   Relative Entropy:} The relative entropy of (\ref{A29})   is a decreasing function of time, and satisfies 
\begin{equation} 
\label{A46}
   H \big( P (T ) \,| \,Q)= H \big( P (0) \,| \,Q) - \int_0^T I ( t) \, \ud t  = \int_T^\infty I ( t) \, \ud t\,, \qquad  I(t)  \,=\,  {\cal E} \big( {\bm \ell}_t, \log {\bm \ell}_t \big) \ge 0
\end{equation}
for all $ \, T \in [0, \infty)$, in the notation of (\ref{A26a}), (\ref{A21}).
\end{thm}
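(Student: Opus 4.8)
The plan is to aggregate the trajectorial statement of Proposition~\ref{DM}, i.e.\ to take $\mathbb{Q}$-expectations in the \textsc{Doob--Meyer} decomposition \eqref{A50a} and then identify the resulting integrand with $I(t)=\cE\big({\bm \ell}_t,\log{\bm \ell}_t\big)$. Two preliminary observations drive the computation. First, by Proposition~\ref{DM} the process $\widehat M^{\,h}$ of \eqref{A45} is a genuine (bounded) $\big(\widehat{\mathbb{G}},\mathbb{Q}\big)$-martingale, so $\mathbb{E}^{\mathbb{Q}}\big[\widehat M^{\,h}(s)\big]=\mathbb{E}^{\mathbb{Q}}\big[\widehat M^{\,h}(0)\big]=\mathbb{E}^{\mathbb{Q}}\big[\Phi\big(\ell(T,X(T))\big)\big]=H\big(P(T)\,|\,Q\big)$, the last step being \eqref{A40d} at $s=0$. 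Second, under $\mathbb{Q}$ the Chain is stationary with one-dimensional marginals equal to $Q$ (as recorded just before Remark~\ref{rem5.1}), whence the reversed process satisfies $\widehat X(u)\sim Q$ for every $u\in[0,T]$.

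Taking $\mathbb{Q}$-expectations in \eqref{A50a}, using \eqref{A40d} on the left-hand side, and interchanging expectation and integral (licit because all quantities are uniformly bounded, by Proposition~\ref{DM}), we obtain
\begin{equation*}
H\big(P(T-s)\,|\,Q\big)=H\big(P(T)\,|\,Q\big)+\int_0^s\sum_{x\in\cS}q(x)\,\Lambda^{\mathbb{Q}}(T-u,x)\,\ud u\,,\qquad 0\le s\le T\,,
\end{equation*}
with $\Lambda^{\mathbb{Q}}$ as in \eqref{A52a}; here the second observation has been used to pass from $\mathbb{E}^{\mathbb{Q}}\big[\Lambda^{\mathbb{Q}}(T-u,\widehat X(u))\big]$ to $\sum_{x}q(x)\Lambda^{\mathbb{Q}}(T-u,x)$.

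The heart of the proof is the algebraic identity $\sum_{x\in\cS}q(x)\,\Lambda^{\mathbb{Q}}(t,x)=\cE\big({\bm \ell}_t,\log{\bm \ell}_t\big)=I(t)$. Starting from $\Lambda^{\mathbb{Q}}(t,x)=\ell(t,x)\sum_{y\neq x}\widehat\kappa(x,y)\,\Psi\big(\ell(t,y)/\ell(t,x)\big)$ and expanding $\ell(t,x)\,\Psi\big(\ell(t,y)/\ell(t,x)\big)=\ell(t,y)\big[\log\ell(t,y)-\log\ell(t,x)\big]-\ell(t,y)+\ell(t,x)$, then substituting $q(x)\widehat\kappa(x,y)=q(y)\kappa(y,x)$ from \eqref{A23}, the double sum splits into three pieces. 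The piece carrying $-\ell(t,y)$ vanishes because $\sum_x\kappa(y,x)=0$; the piece carrying $+\ell(t,x)$ vanishes because $\sum_y q(y)\kappa(y,x)=0$, i.e.\ $\cK'Q=0$ (invariance of $Q$); and the piece carrying $\log\ell(t,y)-\log\ell(t,x)$ collapses — after dropping its own vanishing $\sum_x\kappa(y,x)$ contribution — to $-\sum_{x}\sum_{y}q(y)\kappa(y,x)\,\ell(t,y)\log\ell(t,x)=\cE\big({\bm \ell}_t,\log{\bm \ell}_t\big)$ by the third expression in \eqref{A26a}. Non-negativity $I(t)\ge 0$ is then immediate, since $\Lambda^{\mathbb{Q}}\ge 0$ (equivalently, every summand in the formula just derived is non-negative, as $\Psi\ge 0$).

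Inserting this identity and changing variables $t=T-u$ gives $H\big(P(T-s)\,|\,Q\big)=H\big(P(T)\,|\,Q\big)+\int_{T-s}^{T}I(t)\,\ud t$; relabelling the endpoints, $H\big(P(a)\,|\,Q\big)=H\big(P(b)\,|\,Q\big)+\int_a^b I(t)\,\ud t$ for all $0\le a\le b<\infty$, which yields the monotonicity of $t\mapsto H(P(t)\,|\,Q)$ and, with $a=0$, $b=T$, the first identity in \eqref{A46}. Letting $b\to\infty$ and invoking \eqref{A30'} produces the second identity $H\big(P(T)\,|\,Q\big)=\int_T^\infty I(t)\,\ud t$. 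The only genuinely delicate points are the justification of the expectation–integral interchange — supplied by the uniform boundedness in Proposition~\ref{DM} — and the careful bookkeeping in the three-term split above, where one must keep track of which partial sum over $\cK$ resp.\ $\widehat\cK$ is being annihilated; everything else is a direct consequence of the martingale machinery already in place.
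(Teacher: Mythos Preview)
Your proof is correct and follows essentially the same route as the paper's: take $\mathbb{Q}$-expectations in the \textsc{Doob--Meyer} decomposition \eqref{A50a}, identify the aggregated integrand with $\cE({\bm\ell}_t,\log{\bm\ell}_t)$ via the same algebra as in \eqref{A48}, and invoke \eqref{A30'} for the tail identity. One small bookkeeping point: your claims that the $-\ell(t,y)$ and $+\ell(t,x)$ pieces vanish \emph{separately} require the double sum to run over all pairs $(x,y)$, not just $y\neq x$; this is harmless because $\Psi(1)=0$ lets you reinstate the diagonal before splitting, but as literally written the off-diagonal sums each leave a residual $\pm\sum_x q(x)\,\kappa(x,x)\,\ell(t,x)$ that only cancels when the two pieces are combined.
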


  \noindent
  {\it Proof:} The first claim is simply a restatement of (\ref{A30}); and by taking $\mathbb{Q}-$expectations in (\ref{A50a}) we obtain in conjunction with (\ref{A43}) the first equality of  (\ref{A46}), with 
$$ 
I(t) \, := \, \mathbb{E^Q} \big[ \big( \partial h + \widehat{{\cal K}} h \big) \big(T-t,  X (t) \big) \big] .
 $$
From (\ref{A47}) and (\ref{A26a}),   this quantity coincides with   the last expression in (\ref{A46}): to wit, 
\begin{equation} 
\label{A48}
I(t) \, =\, \sum_{x \in {\cal S}}  \, q(x) \, \big( \partial h + \widehat{{\cal K}} h \big) (T-t,x)  \,=\, \sum_{(x,y) \in {\cal Z} }  \,q(x)\, \widehat{\kappa} ( x, y)\, \ell (t, y) \Big[ \log \frac{\, \ell (t, y)\,}{\ell (t, x)} -   1  \Big] 
\end{equation}
$$
 =\,\sum_{x \in {\cal S}}   \,  q(x)\,  \ell (t,x) \sum_{y \in {\cal S} \atop y \neq x }   \,\widehat{\kappa} ( x, y)    \, \Psi \Big( \frac{\, \ell (t, y)\,}{\ell (t, x)} \Big) =- \sum_{x \in {\cal S}} \sum_{y \in {\cal S}}  \, \kappa (y,x) \, q(y)  \, \ell (t, y) \, \log \ell (t,x)\,=\,  {\cal E} \big( {\bm \ell}_t, \log {\bm \ell}_t \big).
$$
It is non-negative on account of the non-negativity of the last expression in (\ref{A47}), and uniformly continuous as a function of time. In the display (\ref{A48}), the second equality follows from the first equality in (\ref{A47}); the third from the last equality in   (\ref{A47}); the fourth from (\ref{A23}) and the property $\, \sum_{y \in {\cal S}} \, \kappa  (    y,x )=0\,$   for every $y \in {\cal S} $; and the fifth    from the definition (\ref{A26a}). We deduce $\,H \big( P (0) \,| \,Q)  = \int_0^\infty I ( t) \, \ud t \,$  by letting $\, T \to \infty\,$ in (\ref{A46}) and recalling (\ref{A30'}); then the second identity in (\ref{A46}) follows.
  \qed

    \begin{rem}
 Whenever there exists a positive  real constant  $\, \alpha\,$ (respectively, $\, \beta\,$) such that the \textsc{Poincar\'e} (resp., the modified log-\textsc{Sobolev}) inequality 
\begin{equation} 
\label{A60*}
 \alpha \, \le \, \frac{\, 2\, {\cal E} (f,  f) \,}{\,\sum_{y \in {\cal S}} \, q(y) f^2 (y) -1\,} \qquad \bigg(\text{resp.,} \quad  
\beta\, \le \, \frac{\,  {\cal E} (f,  \log f)\,}{\,\sum_{y \in {\cal S}} \, q(y)   f  (y)   \log f(y)\,}\bigg)
 \end{equation}
 holds for every function   $  f : {\cal S} \to (0, \infty)   $   with $\,\sum_{y \in {\cal S}} \, q(y) f  (y) =1,$  it is clear from (\ref{Variance}), (\ref{Variance_Decay}) and (\ref{A35}), (\ref{A46}) that    the variance (resp., the relative entropy) decays exponentially:
\begin{equation} 
\label{A60**}
  \text{Var}^\mathbb{Q} \big( L (t) \big) \,\le \,    \text{Var}^\mathbb{Q} 
  \big( L (0) \big)    \,e^{\, - \alpha \, t}  \qquad  \Big(\text{resp.,} ~~~H \big( P (t) \,| \,Q) \,\le \,   H \big( P (0) \,| \,Q) \,e^{\, - \beta \, t} \Big)\,.
 \end{equation}
\end{rem}
 
\begin{rem}
Expressions for entropy dissipation in a \textsc{Markov} Chain context appear in, e.g., \textsc{Miclo} (1992),  Lemma 2.5 of \textsc{Diaconis \& Saloff-Coste} (1996), \textsc{Bobkov  \& Tetali}  (2006),      \textsc{Montenegro  \&   Tetali} (2006), \textsc{Caputo et al.}\,(2009)    and \textsc{Conforti}\,(2020).   These authors   use   slightly different arguments, based on semigroups. One advantage of the more probabilistic approach we follow here, is that it provides a very sharp picture for the dissipation of relative entropy {\it along trajectories,}    as exemplified in subsection \ref{Traject}.
\end{rem}

\subsection{\textsc{Fisher} Information Under Detailed Balance}

The following  is now a direct consequence of Lemma \ref{DiscreteGradientToo}.

\begin{prop}
 Under  the   detailed-balance condition  (\ref{A24}), the rate of relative entropy dissipation in  (\ref{A46}) can be cast as  
 $$
 I (t)  \,=\,  {\cal E} \big( {\bm \ell}_t, \log {\bm \ell}_t \big) \,=\, \frac{1}{\,2\,} \sum_{(x,y) \in {\cal Z} }   \, \Big(      \log \ell  \big( t,y \big) - \log \ell  \big( t,x \big) \Big)^2  \,\, \Theta \big( \ell (t,y),  \ell (t,x)\big)    
\, \kappa (y,x) \, q(y)  
 $$
\begin{equation} 
\label{A50}
~~~~~~~~~~~~~~~~~~~~~~~~\,=\, \frac{1}{\,2\,}  \sum_{(x,y) \in {\cal Z} }   \, \frac{\,\big(       \ell (t,y) - \ell (t,x) \big)^2 \,}{\Theta \big( \ell (t,y),  \ell (t,x)\big)} 
  \, \kappa (y,x) \, q(y)  ~~~~~~~~~~~~~~~~~~~~~~~~~~~~~~~
\end{equation}
in terms  of the   ``logarithmic mean" function 
\begin{equation} 
\label{A80}
\Theta (  q, p) \,:=\, \frac{ q-p}{\,  \log q- \log p \, } \,=\, \int_0^1 q^r \,p^{1-r}\,\ud r\,, \qquad ( q,p) \in (0, \infty)^2.
\end{equation}
\end{prop}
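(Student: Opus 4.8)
The plan is to read the dissipation rate off Theorem \ref{deBruijn}, rewrite the Dirichlet form in the symmetrized ``discrete gradient'' shape that detailed balance makes available (Lemma \ref{DiscreteGradientToo}), and then apply one elementary identity for the logarithmic mean $\Theta$. Concretely: Theorem \ref{deBruijn} gives, with no extra hypothesis, $I(t) = {\cal E}\big({\bm \ell}_t, \log {\bm \ell}_t\big)$; invoking now the detailed-balance conditions (\ref{A24}), Lemma \ref{DiscreteGradientToo} turns this into the symmetric double sum
$$I(t) \,=\, \frac{1}{\,2\,}\sum_{x \in {\cal S}}\sum_{y \in {\cal S}} \kappa(y,x)\, q(y)\, \big(\ell(t,y)-\ell(t,x)\big)\big(\log\ell(t,y)-\log\ell(t,x)\big).$$
The summand vanishes on the diagonal $x=y$, and under (\ref{A24}) one has $\kappa(y,x)>0 \iff \kappa(x,y)>0$, so the nonzero terms are exactly those indexed by $(x,y)\in{\cal Z}$; since the summand is symmetric in $x,y$, this equals $\tfrac12\sum_{(x,y)\in{\cal Z}}$ of the same terms. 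That is already the assertion, apart from how one chooses to display the factor $\big(\ell(t,y)-\ell(t,x)\big)\big(\log\ell(t,y)-\log\ell(t,x)\big)$.

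It remains to record the pointwise identity, valid for all $(a,b)\in(0,\infty)^2$,
$$(a-b)\big(\log a - \log b\big) \,=\, \big(\log a - \log b\big)^2\,\Theta(a,b) \,=\, \frac{\big(a-b\big)^{2}}{\Theta(a,b)}\,,$$
with $\Theta(a,b) := (a-b)/(\log a - \log b)$ for $a\neq b$ and $\Theta(a,a):=a$. For $a\neq b$ this is just the definition of $\Theta$; for $a=b$ all three members are $0$, the last one making sense because $\Theta(a,a)=a>0$. The integral representation in (\ref{A80}) follows from $\int_0^1 (q/p)^{r}\,\ud r = \big((q/p)-1\big)/\log(q/p)$ for $q\neq p$ (both sides being $p$ when $q=p$), and in particular shows $\Theta>0$ throughout $(0,\infty)^2$, so that division by it is always legitimate. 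Substituting $a=\ell(t,y)$, $b=\ell(t,x)$ into the two right-hand forms of this identity, inside the sum above, produces exactly the two claimed expressions for $I(t)$.

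Since Theorem \ref{deBruijn} and Lemma \ref{DiscreteGradientToo} are already in hand and the rest is a line of calculus, there is no genuine obstacle. The only points worth a moment's care are the passage from the sum over ${\cal S}\times{\cal S}$ to the sum over the edge set ${\cal Z}$ --- which rests precisely on the symmetry of ${\cal Z}$ that detailed balance forces --- and the continuity convention $\Theta(a,a)=a$, needed so that the ``Fisher information'' form with $\Theta$ in the denominator is well defined on the diagonal.
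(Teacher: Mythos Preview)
Your proof is correct and follows exactly the approach the paper intends: the paper simply states that the proposition ``is now a direct consequence of Lemma \ref{DiscreteGradientToo}'', and you have filled in precisely those details --- apply Theorem \ref{deBruijn} for $I(t)={\cal E}({\bm\ell}_t,\log{\bm\ell}_t)$, invoke Lemma \ref{DiscreteGradientToo} under detailed balance to obtain the symmetric double sum, and then insert the definition of $\Theta$ in its two equivalent forms. One small comment on exposition: your phrase ``since the summand is symmetric in $x,y$, this equals $\tfrac12\sum_{(x,y)\in{\cal Z}}$'' could be misread as introducing a new factor of $\tfrac12$, whereas the $\tfrac12$ is already present from Lemma \ref{DiscreteGradientToo} and you are merely restricting the double sum to its nonzero terms (those with $(x,y)\in{\cal Z}$, the edge set being symmetric under detailed balance).
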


   \smallskip

\begin{rem}
The  expression in (\ref{A50})  is reminiscent of the familiar   \textsc{Fisher}  Information  in Statistics and Information Theory; cf.\,\textsc{Bobkov  \& Tetali}  (2006). Always under  the   detailed-balance condition  (\ref{A24}), the  expression of (\ref{A50}) can be expressed in terms of a ``score function", the  discrete logarithmic-gradient of the likelihood ratio, as $\, \bip{ \nabla {\bm \ell}_t , \nabla \log {\bm \ell}_t }_{\bL^2(\cZ,C)}  \,$ in the notation of (\ref{D1})-(\ref{D4}).

As shown in \textsc{Bobkov  \& Tetali}  (2006), the  inequality $\, 2 (a-b)^2 \le (a^2 - b^2) \, \log (a / b)\,$ for $0 < a, b < \infty\,$ leads under      detailed-balance    (\ref{A24}) to the  \textsc{Diaconis and Saloff-Coste} (1996) estimate 
$$\,
{\cal E} \big( e^g, g \big)   \ge   \,4 \, \,{\cal E} \big( e^{g/2}, \,e^{g/2} \big)\,,
~~~~~~~~\text{and thus to}~~~~~~~~\, I(t) = {\cal E} \big( {\bm \ell}_t, \log {\bm \ell}_t \big) \ge \,4 \, {\cal E} \big( \sqrt{ {\bm \ell}_t\,} , \sqrt{{\bm \ell}_t\,} \,\big).
$$
\end{rem}

\section{The $\Phi-$Relative Entropy Process}
\label{Gen}
\label{sec7}

In order to reveal the common thread running through the examples of the last two  Sections, let us consider now a     continuously differentiable and convex function $\, \Phi : (0, \infty) \to \R $ with $\Phi (1)=0$   with continuous, strictly positive second derivative. We denote by $\, \varphi : (0, \infty) \to \R\, $ its derivative $\, \Phi' = \varphi\,.$ For each $\eta >0$, $\xi >0$ we define the \textsc{Bregman} $\, \Phi-${\it divergence} 
\begin{equation} 
\label{A51}
\text{div}^\Phi \big( \eta \, | \, \xi \big) \,:=\, \Phi (\eta) - \Phi (\xi )- (\eta - \xi) \, \varphi (\xi) \,,
\end{equation}
a quantity which is non-negative on account of the convexity of $\Phi$ (and has nothing to do with the ``discrete divergence" we introduced in (\ref{D2})). For instance, $\,\text{div}^\Phi \big( \eta \, | \, \xi \big)= (\xi - \eta)^2\,$ for $\, \Phi (\xi) = \xi^2-1\,;$ whereas, for $\, \Phi (\xi) = \xi \, \log \xi\,$ and  in the notation of (\ref{A52d}), we have 
\begin{equation} 
\label{A51a}
\,\text{div}^\Phi \big( \eta \, | \, \xi \big)\,=\,  \text{div}^\Psi \big( \eta \, | \, \xi \big)\,=\,  \xi \, \Psi (\eta / \xi)\,.
\end{equation}

Let us consider now, for a general convex $\Phi$ as above,  the so-called $\, \Phi-${\it relative entropy}   
\begin{equation} 
\label{A51b}
H^\Phi \big( P(t)    \big|  Q \big) \,:=\, \mathbb{E^Q} \big[ \Phi \big( \ell (t, X(t) \big) \big]\,=\, \sum_{y \in {\cal S}} \, q(y) \, \Phi \Big( \frac{p (t,y)}{q(y)} \Big)\,, \qquad 0 \le t < \infty\,;
\end{equation}
see \textsc{Chafa\"i} (2004) for a general study of such functions. The convexity of $\Phi$ and the \textsc{Jensen} inequality imply that this function is nonnegative, since $\Phi (1)=0$; and from Proposition \ref{FJ},   that the {\it time-reversed $\, \Phi-$relative entropy process} $$\, \Phi \big( \ell (T-s, \widehat{X} ( s) \big), ~~~~~~~\, 0 \le s \le T\,$$ is a $\, (\mathbb{\widehat{G} , Q})-$submartingale, for every fixed $T \in (0, \infty)$.  As a consequence  the function in (\ref{A51b}) decreases, in fact satisfies $\, \lim_{t \to \infty} \downarrow H^\Phi \big( P(t) \, \big| \, Q \big)  =0\,$   by virtue of (\ref{A20a}) and the finiteness of the state space.

\subsection{Trajectorial   Dissipation of the $\, \Phi-$Relative Entropy}
\label{Phi_Traject}

 The \textsc{Doob-Meyer} decomposition of this submartingale is obtained from Proposition \ref{Liggett_Back}  as follows: Consider  the function $g(s, x) = \Phi \big( \ell (T-s, x)\big)$ and compute, in the manner of (\ref{A47}), the quantities 
$$
  \partial g (s,x)  = - \varphi  \big(  \ell (t, x) \big)    
\sum_{y \in {\cal S}}  \, \widehat{\kappa} ( x, y)  \big[ \ell (t, y)- \ell (t,x) \big],~~~~~
\big( \widehat{{\cal K}} g \big) (s,x) =   
\sum_{y \in {\cal S}}  \, \widehat{\kappa} ( x, y)  \big[ \Phi \big( \ell (t, y) \big) - \Phi \big( \ell (t,x) \big) \big]
$$
with $t=T-s$, on account of (\ref{A22}). Putting these expressions together with (\ref{A51}), we deduce 
\begin{equation} 
\label{A52}
\big(  \partial g+ \widehat{{\cal K}} g \big) (s,x) \,=   
\sum_{y \in {\cal S}, \, y \neq x}  \, \widehat{\kappa} ( x, y) \, \,\text{div}^\Phi \big( \eta \, | \, \xi \big) \bigg|_{\eta = \ell (t, y) \atop \xi = \ell (t, x)} \, =: \, \Lambda^{\Phi,   \mathbb{Q} } (t,x) \, \ge \, 0\,.
\end{equation}
 
The following result is now a direct consequence of 
 Proposition \ref{Liggett_Back}  and the \textsc{Bayes} rule. Once again,   the finiteness of the state-space and  the continuity of the functions involved, turn  local into true martingales.

 \begin{prop}
 \label{Prop_8.1}
For any given $T \in (0, \infty)$, the process below is a $\, (\mathbb{\widehat{G} , Q})-$martingale:
 \begin{equation} 
\label{A52aa}
\Phi \big( \ell (T-s, \widehat{X}(s))\big)- \int_0^s \Lambda^{\Phi,  \mathbb{Q} } \big(  T-u, \widehat{X}(u) \big)\, \ud u\,, \qquad 0 \le s \le T\,.
\end{equation}
Whereas, with  $\,\Lambda^{\Phi,   \mathbb{P} } (t,x) := \Lambda^{\Phi,   \mathbb{Q} } (t,x)/ \ell (t,x),$ the process   below  is a $\, (\mathbb{\widehat{G} , P})-$ martingale:
\begin{equation} 
\label{A52bb}
\frac{\Phi \big( \ell (T-s, \widehat{X}(s))\big)}{\ell (T-s, \widehat{X}(s))} - \int_0^s \Lambda^{\Phi,  \mathbb{P} } \big(  T-u, \widehat{X}(u) \big)\, \ud u\,, \qquad 0 \le s \le T.
\end{equation}
 \end{prop}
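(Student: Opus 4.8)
The plan is to treat the two assertions in turn: the first, (\ref{A52aa}), by a direct application of Proposition~\ref{Liggett_Back}, and the second, (\ref{A52bb}), by the \textsc{Bayes} rule combined with a short integration by parts. For (\ref{A52aa}) I would invoke Proposition~\ref{Liggett_Back} with the function $g(s,x):=\Phi\big(\ell(T-s,x)\big)$ on $[0,T]\times\cS$. Its hypothesis is met: each $t\mapsto p(t,x)$ is continuously differentiable (cf.\ (\ref{A19}) and the discussion following (\ref{A2})) and, by the standing assumption $P(0)\in\cM$ together with irreducibility and Remark~\ref{rem5.1}, stays strictly positive on the compact interval $[0,T]$; hence $t\mapsto\ell(t,x)=p(t,x)/q(x)$ is $C^1$ with values in a compact subset of $(0,\infty)$, and since $\Phi\in C^1$ the map $s\mapsto\partial g(s,x)$ is continuous. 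Proposition~\ref{Liggett_Back} then makes $g(s,\widehat{X}(s))-\int_0^s(\partial g+\widehat{\cK}g)(u,\widehat{X}(u))\,\ud u$ a local $(\widehat{\mathbb G},\mathbb Q)$-martingale, and the identity (\ref{A52}) established just above shows $(\partial g+\widehat{\cK}g)(s,x)=\Lambda^{\Phi,\mathbb Q}(T-s,x)$, so this process is exactly the one in (\ref{A52aa}). To upgrade it to a true martingale I would note that on the compact set $[0,T]\times\cS$ the likelihood $\ell$ ranges over a fixed compact subset of $(0,\infty)$, on which the continuous functions $\Phi$ and $\varphi=\Phi'$ are bounded; since $\cS$ is finite this bounds both $g$ and $\Lambda^{\Phi,\mathbb Q}$, so the local martingale is bounded.

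For (\ref{A52bb}), set $Z(s):=\ell\big(T-s,\widehat{X}(s)\big)=L(T-s)$ and $\lambda(u):=\Lambda^{\Phi,\mathbb P}\big(T-u,\widehat{X}(u)\big)$, so that $\Lambda^{\Phi,\mathbb Q}(T-u,\widehat{X}(u))=Z(u)\,\lambda(u)$ by the definition $\Lambda^{\Phi,\mathbb P}=\Lambda^{\Phi,\mathbb Q}/\ell$. By Proposition~\ref{FJ}, $Z$ is a strictly positive $(\widehat{\mathbb G},\mathbb Q)$-martingale with $\mathbb{E}^{\mathbb Q}[Z(s)]=1$; moreover, reading off the finite-dimensional distributions of the time-reversed Chain as in (\ref{A39})--(\ref{A39'}), on each $\widehat{\cG}(s)$ it coincides with the density $\ud\mathbb P/\ud\mathbb Q$. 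Hence, by the \textsc{Bayes} rule, an appropriately integrable $\widehat{\mathbb G}$-adapted process $Y$ is a $(\widehat{\mathbb G},\mathbb P)$-martingale if and only if $Z\,Y$ is a $(\widehat{\mathbb G},\mathbb Q)$-martingale. Taking $Y$ to be the process in (\ref{A52bb}) and subtracting the $(\widehat{\mathbb G},\mathbb Q)$-martingale $A(s):=\Phi\big(\ell(T-s,\widehat{X}(s))\big)-\int_0^s\Lambda^{\Phi,\mathbb Q}\big(T-u,\widehat{X}(u)\big)\,\ud u$ of the first part, I would reduce the claim to showing that the process
\[
s\;\longmapsto\;Z(s)\,Y(s)-A(s)\;=\;\int_0^s Z(u)\,\lambda(u)\,\ud u-Z(s)\int_0^s\lambda(u)\,\ud u
\]
is a $(\widehat{\mathbb G},\mathbb Q)$-martingale. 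With $R(s):=\int_0^s\lambda(u)\,\ud u$ a continuous adapted process of finite variation, integration by parts gives $Z(s)R(s)=\int_0^sZ(u)\,\ud R(u)+\int_0^sR(u)\,\ud Z(u)$ (the covariation term vanishing since $R$ has finite variation), whence the right-hand side above equals $-\int_0^sR(u)\,\ud Z(u)$, a stochastic integral against the bounded $\mathbb Q$-martingale $Z$ and therefore a $\mathbb Q$-martingale. (Staying strictly within the paper's elementary framework, one reaches the same conclusion by verifying directly, via the tower property, \textsc{Fubini}, and the martingale property of $Z$, that $\mathbb{E}^{\mathbb Q}\big[Z(s)R(s)\mid\widehat{\cG}(r)\big]=Z(r)R(r)+\mathbb{E}^{\mathbb Q}\big[\int_r^sZ(u)\lambda(u)\,\ud u\mid\widehat{\cG}(r)\big]$ for $r<s$.) Boundedness on $[0,T]$ — exactly as in the first part, noting that $\Lambda^{\Phi,\mathbb P}=\Lambda^{\Phi,\mathbb Q}/\ell$ and $A$ are bounded there since $\ell$ is bounded away from $0$ and $\infty$ — removes the localization, so $Z\,Y$ is the sum of $A$ and a $(\widehat{\mathbb G},\mathbb Q)$-martingale, hence a $(\widehat{\mathbb G},\mathbb Q)$-martingale, and $Y$ is a $(\widehat{\mathbb G},\mathbb P)$-martingale.

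The one step that is not purely mechanical is this last reduction. Dividing the decomposition (\ref{A52aa}) by the density $Z(s)$ leaves the factor $1/Z(s)$ \emph{outside} the time-integral $\int_0^s\Lambda^{\Phi,\mathbb Q}\,\ud u$, whereas the asserted form (\ref{A52bb}) carries the density \emph{inside} (because $\Lambda^{\Phi,\mathbb P}=\Lambda^{\Phi,\mathbb Q}/\ell$). Commuting $1/Z(s)$ past the integral is not free: the discrepancy is precisely the extra $(\widehat{\mathbb G},\mathbb Q)$-martingale $\int_0^sR(u)\,\ud Z(u)$, and confirming that this term is a genuine — not merely local — martingale (equivalently, the conditional-expectation identity displayed above) is the technical heart of the argument.
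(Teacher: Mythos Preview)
Your proof is correct and follows the same route the paper sketches: Proposition~\ref{Liggett_Back} applied to $g(s,x)=\Phi(\ell(T-s,x))$ together with (\ref{A52}) gives the first claim, and the \textsc{Bayes} rule gives the second, with boundedness (finite $\cS$, continuous $\ell$ bounded away from $0$ and $\infty$ on $[0,T]$) upgrading local to true martingales. The paper's own proof is a one-line pointer to exactly these two ingredients; you have simply made the \textsc{Bayes}-rule step explicit by identifying the correction term $\int_0^s Z(u)\lambda(u)\,\ud u - Z(s)R(s) = -\int_0^s R(u)\,\ud Z(u)$ via integration by parts, which is the honest content behind the paper's ``the remaining claims follow from the \textsc{Bayes} rule.''
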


\subsection{Generalized \textsc{de Bruijn}  Identities}

In view of these considerations, it is  now straightforward to ``aggregate" (i.e., take $\mathbb{Q}-$expectations of)   the $  (\mathbb{\widehat{G} , Q})-$martingale of (\ref{A52aa}). We obtain in the manner of (\ref{A46}) the 
following result, stated again in the forward direction of time; cf.\,\textsc{Chafa\"i}  (2004), Proposition 1.1.

\begin{prop}
\label{Prop_Gen_de_Br}
{\bf Generalized \textsc{de Bruijn}-type identity:} The temporal dissipation of the $\, \Phi-$relative entropy of (\ref{A51b})   is given   for  $\,0 \le T < \infty\,$ as
\begin{equation} 
\label{A53}
   H^\Phi \big( P (T ) \,| \,Q)=H^\Phi \big( P (0 ) \,| \,Q)- \int_0^T I^\Phi ( t) \, \ud t=  \int^\infty_T I^\Phi ( t) \, \ud t \,, \qquad  I^\Phi(t)  \,:=\,  \mathbb{E^Q} \big[ \Lambda^{\Phi,  \mathbb{Q} }\big(  t, X(t) \big) \big] \ge 0 \,.
\end{equation}
 On the strength of (\ref{A52}), the dissipation rate in (\ref{A53}) is given by the $\, \Phi-$\textsc{Fisher} {\it Information}
$$
I^\Phi(t)  =  \sum_{(x,y) \in {\cal Z} }    
q(x)\, \widehat{\kappa} ( x, y) \, \,\mathrm{div}^\Phi \big( \ell (t, y)   \big|   \ell (t, x) \big) = \sum_{x \in {\cal S} }      
\sum_{y \in {\cal S} }  \,q(x)\, \widehat{\kappa} ( x, y) \,\mathrm{div}^\Phi \big( \ell (t, y)   \big|   \ell (t, x) \big)
$$
\begin{equation} 
\label{A54}
=  - \sum_{x \in {\cal S} }      
\sum_{y \in {\cal S}   
}    q(y)\,  \kappa  (   y,x) \,  \ell (t, y)\, \varphi \big( \ell (t,x) \big) = {\cal E} \big( {\bm \ell}_t, \varphi ({\bm \ell}_t)\big)  .
\end{equation}
\end{prop}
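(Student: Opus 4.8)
The plan is to follow the template of Theorem~\ref{deBruijn}: take $\mathbb{Q}$-expectations in the $(\widehat{\mathbb{G}},\mathbb{Q})$-martingale of \eqref{A52aa} and use that its expectation is constant in $s$. Since that process is a genuine (not merely local) martingale by Proposition~\ref{Prop_8.1} — the boundedness coming from the uniform continuity of $t\mapsto p(t,x)$ and the finiteness of $\cS$ is exactly what licenses interchanging $\mathbb{E^Q}$ with the time-integral — evaluating at $s=0$, where $\widehat{X}(0)=X(T)$, and using $\widehat{X}(s)=X(T-s)$ together with the invariance of $Q$ under $\mathbb{Q}$ (so $X(t)\sim Q$ for every $t$), we obtain
\begin{equation*}
H^\Phi\big(P(T-s)\,\big|\,Q\big)\;-\;\int_0^s I^\Phi(T-u)\,\ud u\;=\;H^\Phi\big(P(T)\,\big|\,Q\big),\qquad 0\le s\le T,
\end{equation*}
with $I^\Phi(t)=\mathbb{E^Q}\big[\Lambda^{\Phi,\mathbb{Q}}(t,X(t))\big]\ge 0$, the nonnegativity being that of $\Lambda^{\Phi,\mathbb{Q}}$ in \eqref{A52}. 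The substitution $t=T-u$ turns the integral into $\int_{T-s}^T I^\Phi(t)\,\ud t$, and taking $s=T$ gives the first identity in \eqref{A53} (and re-confirms monotonicity). Letting $T\to\infty$ in that first identity and invoking $\lim_{T\to\infty}H^\Phi(P(T)\,|\,Q)=0$ (already established from \eqref{A20a} and $|\cS|<\infty$) yields $H^\Phi(P(0)\,|\,Q)=\int_0^\infty I^\Phi(t)\,\ud t$; subtracting $\int_0^T I^\Phi(t)\,\ud t$ produces the second identity $H^\Phi(P(T)\,|\,Q)=\int_T^\infty I^\Phi(t)\,\ud t$.

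For the $\Phi$-Fisher information formula I would begin from $I^\Phi(t)=\sum_{x\in\cS}q(x)\,\Lambda^{\Phi,\mathbb{Q}}(t,x)$ and insert \eqref{A52}. Since $\mathrm{div}^\Phi(\xi\,|\,\xi)=0$, the diagonal terms vanish, so the constraint $y\ne x$ may be dropped; this is the double-sum expression in \eqref{A54}, which coincides with the $\cZ$-sum there because $\widehat{\kappa}(x,y)$ vanishes off the edge set. Next expand $\mathrm{div}^\Phi(\ell(t,y)\,|\,\ell(t,x))=\Phi(\ell(t,y))-\Phi(\ell(t,x))-(\ell(t,y)-\ell(t,x))\,\varphi(\ell(t,x))$. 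Summing against $\widehat{\kappa}(x,y)$ over $y$ annihilates the two pieces independent of $y$ — namely $-\Phi(\ell(t,x))$ and $+\ell(t,x)\varphi(\ell(t,x))$ — because $\sum_{y}\widehat{\kappa}(x,y)=0$. In the two surviving pieces apply the adjoint identity $q(x)\,\widehat{\kappa}(x,y)=q(y)\,\kappa(y,x)$ from \eqref{A23}: the term carrying $\Phi(\ell(t,y))$ then equals $\sum_{y}q(y)\Phi(\ell(t,y))\sum_{x}\kappa(y,x)=0$ since the rows of $\cK$ sum to zero, while the term carrying $\ell(t,y)\varphi(\ell(t,x))$ becomes $-\sum_{x}\sum_{y}q(y)\,\kappa(y,x)\,\ell(t,y)\,\varphi(\ell(t,x))$, which is precisely $\cE\big({\bm\ell}_t,\varphi({\bm\ell}_t)\big)$ by the definition \eqref{A26a} of the Dirichlet form.

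I do not expect a serious obstacle here: this is the $\Phi$-generalization of Theorem~\ref{deBruijn}, and the cancellations $\sum_y\widehat{\kappa}(x,y)=0$, $\sum_x\kappa(y,x)=0$, and $q(x)\widehat{\kappa}(x,y)=q(y)\kappa(y,x)$ are the same ones already exploited in Lemma~\ref{DiscreteGradient} and Theorem~\ref{deBruijn}. The only two points demanding a line of care are the forward/backward time bookkeeping in passing from the backward martingale \eqref{A52aa} to the forward-in-time statement \eqref{A53}, and the observation that \eqref{A52aa} is a \emph{true} martingale (Proposition~\ref{Prop_8.1}), so that taking $\mathbb{Q}$-expectations and interchanging expectation with integration are legitimate.
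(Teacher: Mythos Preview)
Your proposal is correct and follows essentially the same approach as the paper: aggregate the $(\widehat{\mathbb{G}},\mathbb{Q})$-martingale of \eqref{A52aa} to obtain the first identity in \eqref{A53}, let $T\to\infty$ for the second, and then reduce the $\Phi$-\textsc{Fisher} information to $\mathcal{E}\big({\bm\ell}_t,\varphi({\bm\ell}_t)\big)$ via the row-sum cancellations $\sum_y\widehat{\kappa}(x,y)=0$, $\sum_x\kappa(y,x)=0$ and the adjoint relation $q(x)\widehat{\kappa}(x,y)=q(y)\kappa(y,x)$. The paper's own proof is terser---it simply notes that only the ``mixed term'' $-\eta\,\varphi(\xi)$ in \eqref{A51} survives the aggregation---but the mechanism you spell out is exactly theirs.
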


 \noindent
 {\it Proof:}  
The third equality in (\ref{A54})   is a consequence of the properties $\, \sum_{y \in {\cal S}} \,\widehat{\kappa} ( x,  y )=0\,$   for every $x \in {\cal S} ,$ and $\, \sum_{x \in {\cal S}} \, \kappa  (   y,x )=0\,$   for every $y \in {\cal S} ,$ as well as of (\ref{A23}). It underscores the fact that, when passing from the trajectorial to the ``aggregate" point of view (that is, when taking $\mathbb{Q}-$expectations), the term $\, \xi \varphi (\xi) -\Phi (\xi)\,$ that depends only on the variable $\xi = \ell (t,x)$, as well as the term $\,\Phi (\eta)\,$ that depends only on the variable  $\eta = \ell (t,y)$, can be ignored in (\ref{A51}); only the ``mixed term" $\,  -\eta \, \varphi (\xi)\,$  remains relevant. We note   that similar reasoning was deployed in the proof of Lemma \ref{DiscreteGradient}.     \qed

\smallskip
\begin{rem} {\it   Some Special  Cases:}  \,{\it (i)}  \,
For the convex function $\, \Phi (\xi) = \xi   \log \xi\, , $\, and recalling (\ref{A51a}), (\ref{A52d}), the quantity $I^\Phi(t)$ of (\ref{A54}) is seen to   coincide  with $I(t)$ in (\ref{A48}), (\ref{A46}).

  \smallskip  
  \noindent
{\it (ii)} \,On the other hand, when $\, \Phi (\xi) = \xi^2-1\, $ we have $\,\text{div}^\Phi \big( \eta \, | \, \xi \big) = (\eta - \xi )^2\,$ in (\ref{A51}) and 
$$ 
H^\Phi \big( P(t)    \big|  Q \big) =\, \mathbb{E^Q}   \big( \ell^2 (t, X(t) \big)  - 1\,= \,\big| {\bm \ell}_t \big|_{\bL^2(\cS,Q)}^2 -1\,=\, \text{Var}^{\mathbb{Q}} (L (t))\,=\,V  \big(P(t)\,|\,Q\big)\,, \qquad 0 \le t < \infty 
$$ 
as in   (\ref{A41b}), and the rate of temporal dissipation for  this function is precisely the integrand in (\ref{A41e}): 
\begin{equation} 
\label{A54b}
I^\Phi(t)  \, = \,-2\, \sum_{x \in {\cal S} }      
\sum_{y \in {\cal S}   
}  \,  q(y)\,  \kappa  (   y,x) \,  \ell (t, x)\,   \ell (t,y)  \,=\, 2\, {\cal E} \big( {\bm \ell}_t, {\bm \ell}_t  \big)
\,.
\end{equation}
{\it (iii)} \,
A bit more generally, the choice of convex function $\, \Phi (\xi) = (\xi^m-1) / (m-1)\, $ with $\, m >1 ,$ leads to the so-called ``\textsc{R\'enyi} relative entropy" 
\begin{equation} 
\label{Ren_Rel_Ent}
 H^\Phi \big( P(t)    \big|  Q \big) \,= \,\frac{\,\mathbb{E^Q}   \big( \ell^m (t, X(t) \big)  - 1\,}{m-1}  \,, \qquad 0 \le t < \infty 
 \end{equation}
whose rate of temporal dissipation   is a generalized version of (\ref{A54b}): 
$$
I^\Phi(t)  \, = \,-\, \frac{m}{m-1}\, \sum_{x \in {\cal S} }      
\sum_{y \in {\cal S}   
}  \,  q(y)\,  \kappa  (   y,x)\, \ell (t,y) \,  \big(   \ell  (t, x) \big)^{m-1} \,=\, \frac{m}{m-1} \, \,{\cal E} \big( {\bm \ell}_t, {\bm \ell}_t^{\,m-1} \big)\,.
$$
  The variance $\text{Var}^{\mathbb{Q}} (L (t))\,$ is thus a special case of the  \textsc{R\'enyi} relative entropy, corresponding to $\,m=2\,$;   whereas the relative entropy in (\ref{A35})  
 corresponds to the limit of (\ref{Ren_Rel_Ent})   as $\, m \downarrow 1.$

{\it We stress  that nowhere in this subsection, or in the one preceding it, did we invoke the detailed-balance     conditions of (\ref{A24}).}
\end{rem}

\subsection{Locally Steepest Descent  for the $\, \Phi-$Relative Entropy Under Detailed Balance}
\label{subsec:LocSteep}

 We formulate now a variational version of Proposition \ref{Prop_Gen_de_Br} {\it under the       conditions   (\ref{A24}) of detailed balance. These will  be in force throughout the current subsection.} 
 
 \smallskip  
  
 \begin{rem}
First, let us   take a look at the expression of (\ref{A54}).    From the consequence $\,q(x)\, \widehat{\kappa} ( x, y)= q(y)\,  \kappa  (   y,x) = q(y)\, \widehat{\kappa} (   y,x)\,$ of the detailed balance conditions (\ref{A24}), as well as from the consequence 
$$
\text{div}^\Phi \big( \eta \, | \, \xi \big) + \text{div}^\Phi \big( \xi \, | \, \eta \big) \,=\, \big( \eta - \xi\big) \big( \varphi (\eta) -  \varphi (\xi) \big)
$$
 of (\ref{A51}), we see that the $\, \Phi-$\textsc{Fisher}  Information  of (\ref{A54})  can be cast in this case as 
\begin{equation}\begin{aligned}\label{A55}
I^\Phi (t) 
 &  = \, 
  \frac{1}{2}\, \sum_{x \in {\cal S} }      
\sum_{y \in {\cal S}   
}  q(x)\, \widehat{\kappa} ( x, y) \, \Big( \text{div}^\Phi \big( \eta \, | \, \xi \big) + \text{div}^\Phi \big( \xi \, | \, \eta \big) \Big)\bigg|_{\eta = \ell (t, y) \atop \xi = \ell (t, x)}
\\ & = \,\frac{1}{2}\, \sum_{x \in {\cal S} }      
\sum_{y \in {\cal S}   
}  q(y)\,  \kappa  (   y,x) \, \Big(  \big( \eta - \xi\big) \big( \varphi (\eta) -  \varphi (\xi) \big) \Big)\bigg|_{\eta = \ell (t, y) \atop \xi = \ell (t, x)} = {\cal E} \big(  \varphi ({\bm \ell}_t), {\bm \ell}_t \big)   
 \\&  =  \,\frac{1}{2}  \sum_{(x,y) \in {\cal Z} }    
q(x)\,  \kappa  ( x, y)  \,
 \Theta^\Phi (  \xi, \eta) \big(\varphi(\xi) - \varphi(\eta)\big)^2 \, \bigg|_{\xi = \ell (t, x) \atop  \eta = \ell (t, y) } \,\, 
\end{aligned}\end{equation}
in the manner of (\ref{A50}); we recall the notation $\, \varphi  = \Phi' .$ Here, the function 
 \begin{equation} 
\label{A51ab}
\Theta^\Phi (  q, p) \,:=\, \frac{\,    q-   p \, }{\,  \varphi (q)- \varphi (p) \,}  \,, \quad    0 < q \neq p < \infty  \,, \qquad ~~\Theta^\Phi (  p, p) \,:=\,\frac{1}{\, \Phi^{''} (p)\,}\,, \quad 0 < p < \infty\,,
\end{equation}
  extends the ``logarithmic mean" of (\ref{A80}), to which it reduces when $\Phi (\xi) = \xi \log \xi.$ With $\, \Phi (\xi) = \xi^2 -1$ we get $\,\Theta^\Phi \equiv 1/2,\,$ and the last expression in (\ref{A55}) reduces to   $\,\,\sum_{(x,y) \in {\cal Z} }   \, 
q(x)\,  \kappa  ( x, y) \cdot  \big(\ell (t, x) - \ell (t, y) \big)^2  \,$ as in (\ref{A41e}).   We shall comment further on this  choice of (\ref{A51ab}), in subsection 9.2 below.  
\end{rem}
 
 \medskip
{\it  We set out now to find a  metric on the manifold $\, {\cal M = P_+ (S)}$ of probability vectors on the state-space, relative to which the   time-marginals for the \textsc{Markov} Chain $ ( P (t) )_{0 \le t < \infty} $ constitute a curve of steepest descent for the $\Phi-$relative entropy.} In other words, we look for a metric on $\, {\cal M}\,$ that can play --- in the current   general  context --- a role similar to that played by the \textsc{Hilbert} norm $\, \| \cdot \|_{\mathbb{H}^{-1} (\cS, Q)}\, $ in Section \ref{sec5}.  

This norm defines  the metric distance of (\ref{Metric_Dist}) that works for the variance $V (P(t)|Q)$, i.e., in the special case $\, \Phi (\xi) = \xi^2-1.$ But except for such very special cases, the Riemannian metric on the manifold ${\cal M}$ will {\it not be flat;}  i.e., {\it not}   induced by such a simple norm as in Proposition  \ref{Proj}. For this reason we are forced to consider the machinery of Riemannian geometry, which we take up in the next Section \ref{sec8}. 
 In this Section we avoid Riemannian terminology, and present the steepest descent property of the curve $\, ( P (t) )_{0 \le t < \infty} \, $ in terms of appropriate \textsc{Hilbert} norms that capture the {\it local}  behavior of the Riemannian metric.

\subsubsection{  Locally Weighted \textsc{Sobolev} Norms  } 

  We start this effort by recalling  from (\ref{D4a})  the norm $   \| F   \|_{\bL^2(\cZ, C)}  $ for functions $F: {\cal Z} \to \R\,.$ This is defined     on the ``off-diagonal Cartesian product" $  {\cal Z}   
$ by assigning to its elements $\,(x,y)$, where $ \, x \neq y\,$, the weights $c(x,y) = \, q(x)\, \kappa  (  x,y) / 2\,$ and taking the usual $\,\mathbb{L}^2-$norm   relative to the positive measure with these weights. For a fixed likelihood ratio $\ell$ in the space ${\cal L = L_+ (S)}\,$ of subsection \ref{sec2.2}  we  consider now, in place of $\, c(x,y) \equiv q(x)\, \kappa (  x,y) / 2\,$ and with the notation of (\ref{A51ab}), the new weights
\begin{align}
\label{eq:def-vartheta}
c(x,y) \cdot 
\vartheta_\ell(x,y)\,,
\qquad\text{ where\, } 
\quad
\vartheta_\ell(x,y) := \Theta^\Phi \big(  \ell (x), \ell (y) \big) \,=\, \frac{\nabla \ell (x,y)}{\nabla ( \varphi \circ \ell )  (x,y)}\,.
\end{align}
 The resulting  {\it weighted} inner product and norm, extensions of the respective quantities for real-valued functions on $\, {\cal S} \times {\cal S}\,$ in (\ref{D4}), (\ref{D4a}) (to which they reduce when $\Phi (\xi) = \xi^2 / 2\,$), are   respectively
 \begin{equation}
 \begin{aligned}
\label{A56}
 \bip{    F ,  G }_{\bL^2(\cZ, \vartheta_\ell C)} 
 	\,&  :=  \, 
		\sum_{(x,y) \in {\cal Z} } 
			\, c(x,y) 
		    \, \vartheta_\ell(x,y)
		 F(x,y)
		\, G(x,y)
		\,\,= \,\bip{  \, \vartheta_\ell F,
	G\, }_{\bL^2(\cZ,C)}\,, 
\\
		\big \| F \big \|_{\bL^2(\cZ, \vartheta_\ell C)}^2
	&\, := \,\bip{  F ,  F }_{\bL^2(\cZ, \vartheta_\ell C)}\,.
\end{aligned}
\end{equation}

 We define now for     $  f: {\cal S}  \to \R $ the  {\it Weighted  \textsc{Sobolev}  Norm} $    \|       \cdot     \|_{\bH^1_\Theta(\cS, \ell Q)} ,$
by replacing on the right-hand sides of (\ref{A26b})--(\ref{D10})    the norm $\,  \| \cdot   \|_{\bL^2(\cZ, C)}\,$  by the new norm $\,  \| \cdot   \|_{\bL^2(\cZ, \vartheta_\ell C)}\,$    in (\ref{A56}):
 \begin{equation} 
\label{A58}
 \big \langle f, g \big \rangle_{\bH^1_\Theta(\cS, \ell Q)}	 	\,:= 
	\,\bip{ \nabla f ,  \nabla g }_{\bL^2(\cZ, \vartheta_\ell C)}\,, 
 \quad  
 \big \| f  \big \|^2_{\bH^1_\Theta(\cS, \ell Q)} 
 \,:=  \,  \big \langle f, f 
 		   \big \rangle_{\bH^1_\Theta(\cS, \ell Q)}= \big \| \nabla f \big \|_{\bL^2(\cZ, \vartheta_\ell C)}^2\,.
\end{equation} 

\smallskip

\begin{rem} 
\label{rem8.3}
It is interesting to note at this point, and will become quite important down the road, that the $  \Phi-$\textsc{Fisher}  Information of (\ref{A54}), (\ref{A55}) can be expressed in terms of the square of this new, weighted \textsc{Sobolev} norm. Indeed, for any $\ell \in {\cal L_+ (S)}$ we have
$$
	{\cal E} \big({ \ell}, \varphi ( { \ell} )  \big) 
	= \bip{   \nabla { \ell} ,
	\nabla \varphi ( { \ell} )  }_{\bL^2(\cZ,C)}
	= \bip{    \vartheta_\ell \nabla \varphi ( { \ell} ) ,
	\nabla \varphi ( { \ell} )  }_{\bL^2(\cZ,C)}
	=    \big \|       \nabla \varphi ( { \ell})   \big \|^2_{\bL^2(\cZ, \vartheta_\ell C)}
	=      \big \|   \varphi ( { \ell} )   \big \|^2_{\bH^1_\Theta(\cS, \ell Q)}.
$$
Thus, the $\, \Phi-$\textsc{Fisher}  Information of     (\ref{A54})  takes the form 
$\,
I^\Phi (t) =   \big \|   \varphi ( {\bm \ell}_t )    \big \|^2_{\bH^1_\Theta(\cS, { \bm \ell}_t Q)}.
$
\end{rem}
\smallskip

Finally, we introduce   in the manner of (\ref{D11}), (\ref{D12})   the dual of this  weighted  \textsc{Sobolev} norm 
\begin{equation}\begin{aligned}
\label{D11_too}
 \big \|  f  \big \|_{\bH^{-1}_\Theta(\cS, \ell Q)} 
  \,:=\, 
  	\sup_{g : {\cal S} \to \R}
		 \frac{\,\bip{f, g}_{\bL^2(\cS,Q)}\,}
		 	  {\, \big \|  g  \big \|_{\bH^1_\Theta(\cS, \ell Q)}\,}  .
\end{aligned}\end{equation}
 This admits a variational characterization    analogous to  (\ref{D12}),  which will be crucial in what follows.  

\begin{prop} {\bf Variational Interpretation:} 
\label{prop:H-min-char}
For any function $\,f : \cS \to \R\,$ we have
\begin{align}
\label{eq:H-min-char}
	\big \|  f  \big \|_{\bH^{-1}_\Theta(\cS, \ell Q)} 	& = \inf_{G : \cZ \to \R } 
    \bigg\{ \| G \|_{\bL^2(\cZ, \vartheta_\ell C)} 
    	 \ : \  f + \nabla \cdot \big(\vartheta_\ell G\big)=0   \bigg \}\, .
\end{align}
\noindent
Moreover, $\big \|  f  \big \|_{\bH^{-1}_\Theta(\cS, \ell Q)}$ is finite if, and only if, $\,\sum_{x \in \mathcal{S}} q(x) f(x)  = 0\,;$ 
  in this case the infimum is attained, and uniquely, by the unique discrete gradient that is admissible.
\end{prop}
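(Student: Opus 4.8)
Throughout, detailed balance is in force, so the integration‑by‑parts identity (\ref{D6}) applies, and $\vartheta_\ell(x,y)=\Theta^\Phi\big(\ell(x),\ell(y)\big)>0$ on $\cZ$ because $\varphi=\Phi'$ is strictly increasing (as $\Phi''>0$); in particular the weighted bilinear form $\bip{\cdot,\cdot}_{\bL^2(\cZ,\vartheta_\ell C)}$ of (\ref{A56}) is a genuine inner product. The plan is the usual finite‑dimensional Hilbert‑space duality. First I dispose of the finiteness dichotomy. Pairing the constraint $f+\nabla\cdot(\vartheta_\ell G)=0$ with the constant function $\mathbf 1$ in $\bL^2(\cS,Q)$ and using (\ref{D6}) with test function $\equiv 1$ (so that $\nabla\mathbf 1=0$) gives $\sum_{x\in\cS}q(x)(\nabla\cdot(\vartheta_\ell G))(x)=0$, hence $\sum_{x\in\cS}q(x)f(x)=0$ is \emph{necessary} for any admissible $G$ to exist; so if $\sum_{x\in\cS}q(x)f(x)\neq 0$ the right‑hand infimum is over the empty set, i.e.\ $+\infty$. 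On the other hand, testing the supremum (\ref{D11_too}) with $g=\mathbf 1+\varepsilon h$ for a fixed non‑constant $h$ and letting $\varepsilon\downarrow 0$ makes the numerator tend to $\sum_{x\in\cS}q(x)f(x)\neq 0$ while the denominator $\varepsilon\|\nabla h\|_{\bL^2(\cZ,\vartheta_\ell C)}\downarrow 0$, so $\|f\|_{\bH^{-1}_\Theta(\cS,\ell Q)}=+\infty$ as well. Both sides thus agree (are infinite) unless $\sum_{x\in\cS}q(x)f(x)=0$, which I assume from now on.

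For the inequality ``$\le$'': given any admissible $G$ and any $g:\cS\to\R$, integration by parts (\ref{D6}) together with (\ref{A56}) yields
$\bip{f,g}_{\bL^2(\cS,Q)}=-\bip{\nabla\cdot(\vartheta_\ell G),\,g}_{\bL^2(\cS,Q)}=\bip{\nabla g,\,\vartheta_\ell G}_{\bL^2(\cZ,C)}=\bip{\nabla g,\,G}_{\bL^2(\cZ,\vartheta_\ell C)}$,
and Cauchy--Schwarz in $\bL^2(\cZ,\vartheta_\ell C)$ bounds the right‑hand side by $\|g\|_{\bH^1_\Theta(\cS,\ell Q)}\,\|G\|_{\bL^2(\cZ,\vartheta_\ell C)}$. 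Dividing by $\|g\|_{\bH^1_\Theta(\cS,\ell Q)}$, taking the supremum over $g$ and the infimum over admissible $G$ gives $\|f\|_{\bH^{-1}_\Theta(\cS,\ell Q)}\le\inf_G\|G\|_{\bL^2(\cZ,\vartheta_\ell C)}$.

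For ``$\ge$'', attainment and uniqueness, introduce the weighted Laplacian $\cL_\ell g:=\nabla\cdot(\vartheta_\ell\nabla g)$. By (\ref{D6}) it is symmetric on $\bL^2(\cS,Q)$ with $\bip{-\cL_\ell g,\,g}_{\bL^2(\cS,Q)}=\|\nabla g\|^2_{\bL^2(\cZ,\vartheta_\ell C)}=\|g\|^2_{\bH^1_\Theta(\cS,\ell Q)}$, which vanishes iff $\nabla g\equiv 0$, i.e.\ (by irreducibility of the incidence graph) iff $g$ is constant; hence $-\cL_\ell$ is positive semidefinite with kernel exactly the constants, so its range is the $Q$‑orthogonal complement of the constants, namely $\{f:\sum_{x\in\cS}q(x)f(x)=0\}$. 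Our $f$ therefore equals $-\cL_\ell g^{\ast}$ for some $g^{\ast}$, unique modulo an additive constant; put $G^{\ast}:=\nabla g^{\ast}$, which is admissible by construction and is the only admissible discrete gradient (two such would have the same $\cL_\ell$‑image, hence differ by a constant). Then $\|G^{\ast}\|^2_{\bL^2(\cZ,\vartheta_\ell C)}=\bip{-\cL_\ell g^{\ast},\,g^{\ast}}_{\bL^2(\cS,Q)}=\bip{f,g^{\ast}}_{\bL^2(\cS,Q)}=\|g^{\ast}\|^2_{\bH^1_\Theta(\cS,\ell Q)}$, so testing (\ref{D11_too}) with $g^{\ast}$ gives $\|f\|_{\bH^{-1}_\Theta(\cS,\ell Q)}\ge \bip{f,g^{\ast}}_{\bL^2(\cS,Q)}/\|g^{\ast}\|_{\bH^1_\Theta(\cS,\ell Q)}=\|g^{\ast}\|_{\bH^1_\Theta(\cS,\ell Q)}=\|G^{\ast}\|_{\bL^2(\cZ,\vartheta_\ell C)}$. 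Combined with the previous paragraph, all these quantities are equal, so $G^{\ast}$ attains the infimum; and since the admissibility constraint cuts out an affine subspace of functions on $\cZ$ while $G\mapsto\|G\|^2_{\bL^2(\cZ,\vartheta_\ell C)}$ is strictly convex ($\vartheta_\ell>0$), the minimizer there is unique — and we have exhibited it as the discrete gradient $\nabla g^{\ast}$. The one step that is not pure bookkeeping is the surjectivity of $-\cL_\ell$ onto the zero‑$Q$‑mean subspace: this is where irreducibility enters, to pin the kernel down to the constants, and where one must keep in mind that $\|\cdot\|_{\bH^1_\Theta(\cS,\ell Q)}$ is only a seminorm on $\cS$‑functions, becoming a norm on the quotient modulo constants.
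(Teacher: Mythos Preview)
Your proof is correct. Both yours and the paper's establish the same key fact---that the dual norm equals $\|g^\ast\|_{\bH^1_\Theta(\cS,\ell Q)}$ where $g^\ast$ solves $f=-\nabla\cdot(\vartheta_\ell\nabla g^\ast)$---but by somewhat different paths. The paper first invokes abstract existence of a minimizer $G$, then uses an orthogonal projection in $\bL^2(\cZ,\vartheta_\ell C)$ onto the subspace of discrete gradients to show $G$ must itself be a gradient $\nabla h$, and finally computes $\|f\|_{\bH^{-1}_\Theta}$ via the \textsc{Legendre} transform $\sup_g\{2\ip{f,g}-\|g\|_{\bH^1_\Theta}^2\}$; uniqueness of the admissible gradient is deferred to a citation. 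You instead construct $g^\ast$ explicitly by inverting the weighted Laplacian $\cL_\ell$ (using irreducibility to identify its kernel and range), and obtain the two inequalities directly: \textsc{Cauchy--Schwarz} plus integration by parts for one direction, testing the supremum in (\ref{D11_too}) at $g=g^\ast$ for the other. Your route is a bit more self-contained---no external citation, no \textsc{Legendre} reformulation, no separate appeal to existence of minimizers---while the paper's projection argument makes the geometric reason ``minimizer $=$ gradient'' more transparent. Both are standard finite-dimensional duality arguments; yours is marginally more elementary.
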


\begin{proof}
Consider a function $f : \cS \to \R$ such that $\sum_{x \in \mathcal{S}} q(x) f(x) = 0;$ if this is not the case, it is straightforward to verify that both sides in \eqref{eq:H-min-char} are infinite.   We note   that the set of admissible $G$ on the right-hand side of \eqref{eq:H-min-char} is non-empty (indeed, $G_0 := - \frac{1}{\vartheta_\ell}\nabla \cK^{-1} f$ is admissible) and that a minimizer exists. 

Let $G : \cZ \to \R$ be such a minimizer.  
We show first that $G$ is a discrete gradient, by a projection argument in the \textsc{Hilbert} space 
$\bL^2(\cZ, \vartheta_\ell C)$.

To this end, let us  denote by $\nabla h$   the orthogonal projection of $G$ onto the subspace of discrete gradients in $\bL^2(\cZ, \vartheta_\ell C)$.
We claim that $\nabla h$ is admissible on the right-hand side of \eqref{eq:H-min-char}. Indeed, $ G - \nabla h$ is orthogonal in $\bL^2(\cZ, \vartheta_\ell C)$ to $\nabla g$ for any $g : \cS \to \R$. This implies
\begin{align*}
 - \bip{\, g , \nabla \cdot\big( \vartheta_\ell (G - \nabla h)\big) \, }_{\bL^2(\cS,Q)}
 =
 \bip{\, \nabla g , \vartheta_\ell (G - \nabla h) \, }_{\bL^2(\cZ,C)}
 =  \bip{\,\nabla g , G - \nabla h}_{\bL^2(\cZ, \vartheta_\ell C)} 
 = 0  
\end{align*}
and yields $\nabla \cdot\big( \vartheta_\ell G \big) = \nabla \cdot\big( \vartheta_\ell \nabla h \big),$  proving the claim. 

 \smallskip
By orthogonality, we have $ \| G \|_{\bL^2(\cZ, \vartheta_\ell C)}^2 =  \|\nabla h\|_{\bL^2(\cZ, \vartheta_\ell C)}^2 +  \| G - \nabla h\|_{\bL^2(\cZ, \vartheta_\ell C)}^2$. 
Since $G$ is a minimizer, we infer   $\| G - \nabla h\|_{\bL^2(\cZ, \vartheta_\ell C)} = 0$, which implies that $G \equiv \nabla h$.This shows that $\nabla h$ is a minimizer, and that the right-hand side of \eqref{eq:H-min-char} is equal to $\|  h \big \|_{\bH^1_\Theta(\cS, \ell Q)}$. It is shown in \textsc{Maas} (2011) that $\nabla h$ is actually the \emph{unique} discrete gradient satisfying the constraint in \eqref{eq:H-min-char}.

 \smallskip
To prove the equality in \eqref{eq:H-min-char}, we note   for any $g : \cS \to \R$ the identities 
\begin{align*}
	\ip{f,g}_{\bL^2(\cS,Q)}
	= - \bip{\nabla \cdot\big( \vartheta_\ell \nabla h \big),g}_{\bL^2(\cS,Q)}
	= \bip{\vartheta_\ell \nabla h ,\nabla g}_{\bL^2(\cZ,C)}
	= \big \langle h , g \big \rangle_{{\bH^1_\Theta(\cS, \ell Q)}  \,\,.} 
\end{align*}
Writing the dual norm as a \textsc{Legendre} transform, we obtain
\begin{align*}
	 \big \|  f  \big \|_{\bH^{-1}_\Theta(\cS, \ell Q)}^2 
	& = \sup_{g : \cS \to \R} 
		 \bigg\{ 
		 	2\ip{f,g}_{\bL^2(\cS,Q)}
			 - 
		 \big \|  g  \big \|_{\bH^1_\Theta(\cS, \ell Q)}^2
		 \bigg\}
\\&	= \sup_{g : \cS \to \R} 
		 \bigg\{ 
		 	 2   \, \big \langle h , g \big \rangle_{{\bH^1_\Theta(\cS, \ell Q)}}		 - 
		 \big \|  g  \big \|_{\bH^1_\Theta(\cS, \ell Q)}^2
		 \bigg\}
	= \|  h \big \|_{\bH^1_\Theta(\cS, \ell Q)}^2 \,\,,
\end{align*}
which establishes the equality in \eqref{eq:H-min-char}.
\end{proof}

Let us   consider now  as   in   Section \ref{sec5}, for some $\varepsilon>0$   an arbitrary smooth curve $\, \ell^\psi (\cdot)  = ( \ell^\psi (t) )_{t_0 \le t < t_0 + \varepsilon}$ with  initial position $\ell^\psi (t_0) = {\bm \ell} \equiv {\bm \ell} (t_0)$ in ${\cal L = \cal L_+ (S)}$.
In order to compute ${\bH^{-1}_\Theta(\cS, \ell Q)}$-norms, it is natural in view of Proposition \ref{prop:H-min-char} to write the time-evolution in the manner of a ``discrete continuity equation'' $$\,\partial \ell^\psi_t + \nabla \cdot ( \vartheta_{\ell_t} \nabla \psi_t ) = 0$$ as in subsection \ref{sec6.1}, where $\psi_t : \mathcal{S}\to \R$ is unique up to an additive constant.

  We regard here  $  \psi (\cdot)$ as an input, whose gradient is the velocity vector field  that yields   the infinitesimal change $\partial \ell^\psi_t$  of the likelihood ratio flow.  In light of (\ref{A22}), (\ref{eq:def-vartheta}) and detailed balance, the original backward   equation $ \,\partial {\bm \ell}_t = \widehat{{\cal K}} {\bm \ell}_t= {\cal K} {\bm \ell}_t = \nabla \cdot ( \nabla {\bm \ell}_t ) = \nabla \cdot ( \vartheta_{\bm \ell_t} \nabla \varphi(\bm \ell_t ) $    corresponds to $\, \psi_t = - \varphi ({\bm \ell}_t)\,$ in this scheme of things.

\smallskip
We define  as in (\ref{A22_Prob})  the corresponding curve $   P^\psi (\cdot)  = \big( P^\psi (t) \big)_{t_0 \le t < t_0 + \varepsilon} $     on   the manifold $  {\cal M = P_+ (S)}$  of probability vectors on the state-space. We obtain   the  following  generalization of Proposition \ref{Prop_Gen_de_Br}.

\begin{prop}
\label{Prop8.3}
In the above context, we have
\begin{equation}\begin{aligned}
\label{A57}
	\partial H^\Phi \big(P^\psi (t)\,|\,Q\big)
 =   \bip{ \,  \varphi (\ell^\psi_t ) ,    \psi_t  \,}_{\bH^1_\Theta(\cS, \ell_t Q)}  \,.
\end{aligned}\end{equation}
\end{prop}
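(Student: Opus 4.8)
The plan is to establish \eqref{A57} by differentiating the $\Phi$-relative entropy directly and then performing a single discrete integration by parts. First I would differentiate the defining expression \eqref{A51b} term by term: since $\ell^\psi(\cdot)$ is a smooth curve taking values in $\cL = \cL_+(\cS)$ (so that $\ell^\psi(t,y) > 0$ and $\varphi = \Phi'$ may be applied pointwise), the chain rule gives
\[
	\partial H^\Phi\big(P^\psi(t)\,|\,Q\big)
	\,=\, \sum_{y \in \cS} q(y)\, \varphi\big(\ell^\psi(t,y)\big)\, \partial \ell^\psi(t,y)
	\,=\, \bip{\, \varphi(\ell^\psi_t),\, \partial \ell^\psi_t \,}_{\bL^2(\cS,Q)}\,.
\]
Next I would substitute the ``discrete continuity equation'' $\partial \ell^\psi_t = - \nabla \cdot\big( \vartheta_{\ell^\psi_t} \nabla \psi_t \big)$ that defines the curve, and apply the discrete integration-by-parts formula \eqref{D6} --- available precisely because the detailed-balance conditions \eqref{A24} are in force throughout this subsection --- with $f = \varphi(\ell^\psi_t)$ and $F = \vartheta_{\ell^\psi_t}\nabla\psi_t$. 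This produces
\[
	\partial H^\Phi\big(P^\psi(t)\,|\,Q\big)
	\,=\, - \bip{\, \varphi(\ell^\psi_t),\, \nabla \cdot\big(\vartheta_{\ell^\psi_t} \nabla \psi_t\big) \,}_{\bL^2(\cS,Q)}
	\,=\, \Bip{\, \nabla \varphi(\ell^\psi_t),\, \vartheta_{\ell^\psi_t} \nabla \psi_t \,}_{\bL^2(\cZ,C)}\,.
\]

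The last step is bookkeeping. By the very definition \eqref{A56} of the weighted pairing (the weight $\vartheta_{\ell^\psi_t}$ being carried equivalently on either factor of an $\bL^2(\cZ,C)$-pairing, and symmetric in its two arguments as is evident from \eqref{A51ab}), the right-hand side above equals $\bip{\nabla \varphi(\ell^\psi_t), \nabla \psi_t}_{\bL^2(\cZ, \vartheta_{\ell^\psi_t} C)}$, which is precisely $\bip{\varphi(\ell^\psi_t), \psi_t}_{\bH^1_\Theta(\cS, \ell^\psi_t Q)}$ by the definition \eqref{A58}. This is the asserted identity \eqref{A57}.

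I do not expect a genuine obstacle: the argument is a short computation once the correct pairings are arranged. The two points that deserve a moment's care are that detailed balance is genuinely invoked --- it is what makes \eqref{D6} available and hence what permits moving the discrete gradient onto $\varphi(\ell^\psi_t)$ --- and that one must track which factor of an $\bL^2(\cZ,C)$-pairing carries the weight $\vartheta_{\ell^\psi_t}$. As a consistency check, taking $\psi_t = -\varphi({\bm \ell}_t)$, which by \eqref{A22} and \eqref{eq:def-vartheta} is the input generating the true curve of time-marginals, formula \eqref{A57} collapses to $\partial H^\Phi(P(t)\,|\,Q) = - \big\| \varphi({\bm \ell}_t) \big\|^2_{\bH^1_\Theta(\cS, {\bm \ell}_t Q)} = - I^\Phi(t)$, in agreement with Proposition~\ref{Prop_Gen_de_Br} and Remark~\ref{rem8.3}.
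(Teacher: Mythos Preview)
Your proof is correct and follows essentially the same route as the paper: differentiate $H^\Phi$ via the chain rule, insert the discrete continuity equation, apply the integration-by-parts formula \eqref{D6} (available under detailed balance), and unwind the definitions \eqref{A56}--\eqref{A58} of the weighted pairings. The added consistency check with $\psi_t = -\varphi({\bm \ell}_t)$ is a nice touch but not needed for the argument.
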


\begin{proof}
Using the abovementioned discrete continuity equation, a discrete integration by parts, and the definitions of the scalar products, we deduce 
\begin{align*}
\partial H^\Phi \big(P^\psi (t)\,|\,Q\big) 
& = \,\partial \,\mathbb{E^Q} \Big[ \Phi \big(  \ell^\psi    \big(t,   X  (t) \big) \big) \Big]
\,  
=  - \bip{  \,   \varphi (\ell^\psi_t ),\nabla \cdot ( \vartheta_{\ell_t} \nabla \psi_t )  \, }_{\bL^2(\cS,Q)} 
\\&=  \bip{  \,  \nabla \varphi (\ell^\psi_t ), \vartheta_{\ell_t} \nabla \psi_t \,  }_{\bL^2(\cZ,C)}
=  \bip{  \,  \nabla \varphi (\ell^\psi_t ), \nabla \psi_t \, }_{\bL^2(\cZ, \vartheta_{\ell_t} C)} 
= \bip{ \,  \varphi (\ell^\psi_t ) ,    \psi_t  \,}_{\bH^1_\Theta(\cS, \ell_t Q)}\,,
\end{align*}
as desired.
\end{proof}

 With the context and   notation just established,  and always for $\,{\bm \ell} \equiv {\bm \ell} (t_0)= ( \ell (t_0, x)  )_{x \in {\cal S}}\,,$  we can formulate the following  analogue of Proposition \ref{Proj}.  This result  uses the characterizations of the weighted $\mathbb{H}^{-1}-$norm  in \eqref{D11_too},   along with the identity $\cK {\bm \ell} = \nabla \cdot \big( \vartheta_{\bm \ell} \nabla \varphi({\bm \ell})\big)$.

\begin{prop}
\label{Proj_Gen}
Under the  conditions (\ref{A24}) of detailed balance, we have, with $\ell^\psi (t_0) = {\bm \ell} \equiv {\bm \ell} (t_0)$,
 \begin{equation} 
\label{A60}
\lim_{h \downarrow 0}\, \frac{1}{h}\,   \big \| \,{\bm \ell }_{t_0 +h} -   {\bm \ell }_{t_0} \,\big \|_{\bH^{-1}_\Theta(\cS, {\bm \ell} Q)} 
	\, = \,\big \|   \, {\cal K}\, {\bm \ell}_{t_0} \, \big \|_{\bH^{-1}_\Theta(\cS, {\bm \ell} Q)} \, 
	    = \,\big \|   \,  \varphi( {\bm \ell}_{t_0} )\, \big \|_{\bH^1_\Theta(\cS, {\bm \ell} Q)} \,; 
\end{equation} 
  and a bit more generally, 
   \begin{equation} 
 \label{A61} 
\lim_{h \downarrow 0}\, \frac{1}{h}\,   \big \| \,  \ell_{t_0+h}^\psi -    \ell_{t_0}^{ \psi}  \,\big \|_{\bH^{-1}_\Theta(\cS, {\bm \ell} Q)}
 \, = \,
 \big \|   \,\nabla \cdot ( \vartheta_{{\bm \ell }_{t_0}} \nabla \psi_{t_0})  \, \big \|_{\bH^{-1}_\Theta(\cS, {\bm \ell} Q)}
 \, = \,\big \|      \psi_{t_0}   \big \|_{\bH^1_\Theta(\cS, \ell Q)} \,.
\end{equation} 
\end{prop}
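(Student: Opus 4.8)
The plan is to prove the general identity \eqref{A61} first, and then obtain \eqref{A60} as the special case $\psi_t=-\varphi({\bm \ell}_t)$. Throughout, detailed balance \eqref{A24} is in force, and I keep firmly in mind that in the norms $\|\cdot\|_{\bH^{\pm 1}_\Theta(\cS,{\bm \ell} Q)}$ the weight $\vartheta$ is \emph{frozen} at the base point, i.e.\ it is $\vartheta_{\bm \ell}$ with $\,{\bm \ell}={\bm \ell}(t_0)=\ell^\psi(t_0)$, while the curve $\ell^\psi(\cdot)$ itself is perturbed. These two weights agree at $t=t_0$ precisely because $\ell^\psi_{t_0}={\bm \ell}_{t_0}$.

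For the left-hand equality in \eqref{A61}: since $t\mapsto \ell^\psi_t$ is smooth and solves the discrete continuity equation $\partial \ell^\psi_t+\nabla\cdot(\vartheta_{\ell^\psi_t}\nabla\psi_t)=0$, differentiability at $t_0$ gives $\tfrac1h\big(\ell^\psi_{t_0+h}-\ell^\psi_{t_0}\big)\to \partial\ell^\psi_{t_0}=-\nabla\cdot(\vartheta_{{\bm \ell}_{t_0}}\nabla\psi_{t_0})$ pointwise on the finite state-space $\cS$ as $h\downarrow 0$. Applying the integration-by-parts formula \eqref{D6} with the constant function shows $\sum_{x\in\cS}q(x)(\nabla\cdot F)(x)=0$ for every $F:\cZ\to\R$; in particular $\partial\ell^\psi_{t_0}$ has vanishing $Q$-average, so by Proposition \ref{prop:H-min-char} it lies in the finite-dimensional linear subspace on which $\|\cdot\|_{\bH^{-1}_\Theta(\cS,{\bm \ell} Q)}$ is a genuine, finite norm. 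As every norm on a finite-dimensional space is continuous, the limit passes inside the norm, yielding $\lim_{h\downarrow 0}\tfrac1h\|\ell^\psi_{t_0+h}-\ell^\psi_{t_0}\|_{\bH^{-1}_\Theta(\cS,{\bm \ell} Q)}=\|\nabla\cdot(\vartheta_{{\bm \ell}_{t_0}}\nabla\psi_{t_0})\|_{\bH^{-1}_\Theta(\cS,{\bm \ell} Q)}$.

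For the right-hand equality in \eqref{A61}, I apply the variational characterization of Proposition \ref{prop:H-min-char} with $f:=-\nabla\cdot(\vartheta_{\bm \ell}\nabla\psi_{t_0})$ and $\ell:={\bm \ell}$. The field $G:=\nabla\psi_{t_0}$ is admissible, since $f+\nabla\cdot(\vartheta_{\bm \ell}G)=0$; moreover it is a discrete gradient, so by the uniqueness clause of Proposition \ref{prop:H-min-char} it is precisely the (unique) minimizer. Hence $\|\nabla\cdot(\vartheta_{\bm \ell}\nabla\psi_{t_0})\|_{\bH^{-1}_\Theta(\cS,{\bm \ell} Q)}=\|\nabla\psi_{t_0}\|_{\bL^2(\cZ,\vartheta_{\bm \ell} C)}=\|\psi_{t_0}\|_{\bH^1_\Theta(\cS,{\bm \ell} Q)}$, the last step being the definition \eqref{A58}. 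Combining with the previous paragraph establishes \eqref{A61}.

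Finally, \eqref{A60} is the specialization of \eqref{A61} to the curve of time-marginal likelihood ratios itself. Under \eqref{A24} the backward equation \eqref{A22} is $\partial{\bm \ell}_t=\cK{\bm \ell}_t$, and by the definition \eqref{eq:def-vartheta} of $\vartheta_\ell$ together with the concatenation formula \eqref{D3} one has $\cK{\bm \ell}_t=\nabla\cdot(\nabla{\bm \ell}_t)=\nabla\cdot(\vartheta_{{\bm \ell}_t}\nabla\varphi({\bm \ell}_t))$, so ${\bm \ell}_t$ is exactly the output $\ell^\psi_t$ corresponding to the input $\psi_t=-\varphi({\bm \ell}_t)$. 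Substituting $\psi_{t_0}=-\varphi({\bm \ell}_{t_0})$ into \eqref{A61} turns the middle term into $\|\nabla\cdot(\vartheta_{{\bm \ell}_{t_0}}\nabla(-\varphi({\bm \ell}_{t_0})))\|_{\bH^{-1}_\Theta(\cS,{\bm \ell} Q)}=\|\cK{\bm \ell}_{t_0}\|_{\bH^{-1}_\Theta(\cS,{\bm \ell} Q)}$ and the right-hand term into $\|\varphi({\bm \ell}_{t_0})\|_{\bH^1_\Theta(\cS,{\bm \ell} Q)}$, which is \eqref{A60}. The main thing to get right is the bookkeeping of the frozen versus moving weight — ensuring the $\vartheta$ in the $\bH^{\pm 1}_\Theta$ norms stays $\vartheta_{{\bm \ell}(t_0)}$ throughout the perturbation while coinciding with the continuity-equation weight at $t_0$ — and invoking the uniqueness part of Proposition \ref{prop:H-min-char} so that the admissible discrete gradient $\nabla\psi_{t_0}$ is genuinely the minimizer; the rest is a direct reprise of the argument for Proposition \ref{Proj}.
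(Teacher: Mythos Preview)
Your proof is correct and follows essentially the same approach as the paper, which merely gestures at Proposition~\ref{prop:H-min-char} and the identity $\cK{\bm \ell}=\nabla\cdot(\vartheta_{\bm \ell}\nabla\varphi({\bm \ell}))$ without spelling out the details. Your careful tracking of the frozen weight $\vartheta_{{\bm \ell}(t_0)}$ versus the moving weight $\vartheta_{\ell^\psi_t}$, and your explicit appeal to the uniqueness clause of Proposition~\ref{prop:H-min-char} to identify $\nabla\psi_{t_0}$ as the minimizer, are exactly the points the paper leaves implicit.
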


 \medskip
 
We   pass now to the principal result of the Section.  This  generalizes Theorem \ref{Steep_Desc_Var}, to which it reduces when $\Phi (\xi) = \xi^2-1.$   It is also a direct analogue of Theorem 3.2 in \textsc{Karatzas, Schachermayer \& Tschiderer}\,(2020), where a similar steepest-descent  for the   relative entropy is established for \textsc{Langevin} diffusions, and with distance on the ambient space measured by the quadratic \textsc{Wasserstein}  metric.   The role of that metric is played  now by the locally flat metric defined in (\ref{Metric_Dist_Too}) below.

\begin{thm}
{\bf Steepest Descent for the $\Phi-$Relative Entropy:} 
\label{Steep_Desc_Gen}
Under the  detailed-balance   conditions (\ref{A24}),    the curve $\,  ( P (t)  )_{t_0 \le t < \infty}\,$ of time-marginal distributions   in (\ref{A19}) has the property of   steepest descent   in   Definition \ref{SD} for the $\Phi-$Relative Entropy   of (\ref{A51b}), locally at $t=t_0\,,$  and  with respect to the distance induced by the ``flat metric"
\begin{equation} 
\label{Metric_Dist_Too}
\varrho_\star \big( P_1, P_2 \big) 
	\,:= \, \big \| \, {\bm \ell}_1 - {\bm \ell}_2 \,\big 
		  \|_{\bH^{-1}_\Theta(\cS, \ell Q)}  \qquad \text{ for  \, $P_1 = {\bm \ell}_1 Q$ \,  and $~P_2 = {\bm \ell}_2 \,Q.$}
\end{equation}
Here again we have ${\bm \ell} \equiv {\bm \ell} (t_0)$.
\end{thm}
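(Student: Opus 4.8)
\emph{Plan of proof.} The plan is to reproduce, essentially line for line, the argument of subsection~\ref{justitia} that established Theorem~\ref{Steep_Desc_Var}, with the flat \textsc{Hilbert} norm $\|\cdot\|_{\bH^{-1}(\cS,Q)}$ replaced throughout by the weighted \textsc{Sobolev} norm $\|\cdot\|_{\bH^{-1}_\Theta(\cS,{\bm \ell} Q)}$ of \eqref{D11_too}, frozen at the reference likelihood ${\bm \ell}={\bm \ell}(t_0)$. All of the genuine analytic content has already been packaged into Propositions~\ref{Prop8.3} and~\ref{Proj_Gen}; what remains is to assemble these and to invoke \textsc{Cauchy-Schwarz} in the \textsc{Hilbert} space $\bH^1_\Theta(\cS,{\bm \ell}_{t_0}Q)$.

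First I would fix $t_0\in(0,\infty)$ and an arbitrary smooth competitor $\big(P^\psi(t)\big)_{t_0\le t<t_0+\varepsilon}\subset\cM$ with $\ell^\psi(t_0)={\bm \ell}(t_0)$, written in the discrete continuity form $\partial\ell^\psi_t+\nabla\cdot(\vartheta_{\ell_t}\nabla\psi_t)=0$ as in the discussion preceding Proposition~\ref{Prop8.3}. Evaluating Proposition~\ref{Prop8.3} at $t=t_0$ gives the instantaneous rate of the functional, and Proposition~\ref{Proj_Gen} (formula \eqref{A61}) the rate of the metric distance:
\begin{align*}
\lim_{h\downarrow 0}\frac{H^\Phi\big(P^\psi(t_0+h)\,\big|\,Q\big)-H^\Phi\big(P(t_0)\,\big|\,Q\big)}{h}
 &= \bip{\varphi({\bm \ell}_{t_0}),\psi_{t_0}}_{\bH^1_\Theta(\cS,{\bm \ell}_{t_0}Q)}\,, \\
\lim_{h\downarrow 0}\frac{\varrho_\star\big(P^\psi(t_0+h),P(t_0)\big)}{h}
 &= \big\|\psi_{t_0}\big\|_{\bH^1_\Theta(\cS,{\bm \ell}_{t_0}Q)}\,.
\end{align*}
Dividing (legitimate once $\nabla\psi_{t_0}\not\equiv 0$; the stationary case is handled separately) identifies the rate of change of $H^\Phi$ per unit $\varrho_\star$-length along the competitor as $\bip{\varphi({\bm \ell}_{t_0}),\,\psi_{t_0}/\|\psi_{t_0}\|_{\bH^1_\Theta(\cS,{\bm \ell}_{t_0}Q)}}_{\bH^1_\Theta(\cS,{\bm \ell}_{t_0}Q)}$.

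Next I would specialize to the Chain's own curve: by \eqref{A22}, \eqref{eq:def-vartheta} and detailed balance it corresponds to $\psi_t=-\varphi({\bm \ell}_t)$, so substituting $\psi_{t_0}=-\varphi({\bm \ell}_{t_0})$ the rate becomes $-\|\varphi({\bm \ell}_{t_0})\|_{\bH^1_\Theta(\cS,{\bm \ell}_{t_0}Q)}=-\sqrt{I^\Phi(t_0)}<0$ by Remark~\ref{rem8.3}. Then \textsc{Cauchy-Schwarz} in $\bH^1_\Theta(\cS,{\bm \ell}_{t_0}Q)$ yields
\begin{align*}
\bip{\varphi({\bm \ell}_{t_0}),\,\frac{\psi_{t_0}}{\|\psi_{t_0}\|_{\bH^1_\Theta(\cS,{\bm \ell}_{t_0}Q)}}}_{\bH^1_\Theta(\cS,{\bm \ell}_{t_0}Q)}
\,\ge\,-\big\|\varphi({\bm \ell}_{t_0})\big\|_{\bH^1_\Theta(\cS,{\bm \ell}_{t_0}Q)}
\end{align*}
for every admissible competitor, with equality exactly when $\nabla\psi_{t_0}$ is a negative multiple of $\nabla\varphi({\bm \ell}_{t_0})$ --- that is, when the competitor agrees infinitesimally with the Chain's curve. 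This is precisely the minimality required by Definition~\ref{SD}, which completes the proof.

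Given Propositions~\ref{Prop8.3} and~\ref{Proj_Gen} there is no deep obstacle here; the only care needed is in the degenerate case where the competitor velocity $\nabla\psi_{t_0}$ vanishes (the $\varrho_\star$-rate is then $0$ and one argues the comparison directly) and in tracking the additive-constant ambiguity of $\psi$ when phrasing the equality case. The one genuinely substantive step, were it not already available, would be formula \eqref{A61}: that the $\bH^{-1}_\Theta$-distance travelled in time $h$ equals $h\,\|\psi_{t_0}\|_{\bH^1_\Theta(\cS,{\bm \ell}_{t_0}Q)}+o(h)$. That rests on the variational characterization of Proposition~\ref{prop:H-min-char} together with the identity $\cK{\bm \ell}=\nabla\cdot\big(\vartheta_{\bm \ell}\nabla\varphi({\bm \ell})\big)$, which together exhibit $\nabla\psi_{t_0}$ as the norm-minimizing flux representing the infinitesimal change $\partial\ell^\psi_{t_0}$ of the likelihood flow; in the present exposition that work is taken as done.
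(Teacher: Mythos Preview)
Your proposal is correct and follows essentially the same approach as the paper's own proof: invoke Propositions~\ref{Prop8.3} and~\ref{Proj_Gen} to obtain the two rates, form their ratio, specialize to the Chain's curve via $\psi_{t_0}=-\varphi({\bm \ell}_{t_0})$, and conclude by \textsc{Cauchy--Schwarz} in $\bH^1_\Theta(\cS,{\bm \ell}_{t_0}Q)$, with the equality case characterized identically. Your identification of the driver $\psi_t=-\varphi({\bm \ell}_t)$ for the original curve is in fact the correct one (consistent with the discussion preceding Proposition~\ref{Prop8.3}); the paper's proof contains a slip where it writes ``$\psi(\cdot)\equiv{\bm \ell}(\cdot)$'' at this point, a remnant of the variance case in subsection~\ref{justitia}.
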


\noindent
{\it Proof:} 
This is proved  exactly as in subsection \ref{justitia}, with the caveat that the distance-inducing flat metric is now determined ``locally", that is, depends on   $(t_0, {\bm \ell}) \equiv (t_0,{\bm \ell} (t_0))$ in the weighted norms of (\ref{A56})--(\ref{D11_too}).  We   go through the argument again, however, in order   to highlight the role that these weighted norms   play in the present,   more general  context. From (\ref{A57}), and recalling the initial position $\,\ell^\psi (t_0) =   {\bm \ell} (t_0) \in {\cal L}\,,$ we obtain
$$
\lim_{h \downarrow 0} \frac{\,H^\Phi \big(P^\psi (t_0 +h)\,|\,Q\big) - H^\Phi \big(P (t_0)\,|\,Q\big)}{h} 
\,=\, \,\Big \langle    \,    \varphi({\bm \ell }_{t_0}) ,    \psi_{t_0}   \Big \rangle_{\bH^1_\Theta(\cS, \ell Q)}  \,;
$$
whereas  (\ref{A61}) gives
$$
\lim_{h \downarrow 0} \, \frac{\,\varrho_\star \big( P^\psi (t_0+h), P (t_0) \big)\,}{h} \,=\, \big \|       \psi_{t_0}   \big \|_{\bH^1_\Theta(\cS, \ell Q)} \,,
$$
thus 
\begin{equation}
\label{inner}
\lim_{h \downarrow 0} \frac{\, H^\Phi \big(P^\psi (t_0 +h)\,|\,Q\big)
	 - H^\Phi \big(P   (t_0)\,|\,Q\big)}{\,\varrho_\star \big( P^\psi (t_0+h), P (t_0) \big)\, } \,=\,   \,\bigg \langle    \,   \varphi( {\bm \ell }_{t_0}) ,  \frac{  \psi_{t_0}  }{\, \big \|       \psi_{t_0}   \big \|_{\bH^1_\Theta(\cS, \ell Q)} \,}   \bigg \rangle_{\bH^1_\Theta(\cS, \ell Q)} \,.
\end{equation}
This is the rate of change for the $\Phi-$relative entropy    along  the  {\it perturbed curve} $\,  \big( P^\psi (t)  \big)_{t_0 \le t < t_0 + \varepsilon}\,,$  as measured on the manifold ${\cal M}$ with respect  to the   distance in (\ref{Metric_Dist_Too}). 

\smallskip
On the other hand,  we have from (\ref{A54}), (\ref{A53})  and (\ref{A55}),  the following observation: Along the {\it original curve}  of time-marginal distributions $\,  ( P (t)  )_{t_0 \le t < \infty}\,$ for the Chain, corresponding to taking   $    \psi  (\cdot) \equiv \varphi({\bm \ell}) (\cdot)   $   above, the rate of $\Phi-$relative entropy  dissipation measured in terms of the ``flat metric"   distance traveled   on  the manifold ${\cal M},  $ is given as
$$
\lim_{h \downarrow 0} \frac{\, H^\Phi \big(P  (t_0 +h)\,|\,Q\big) - H^\Phi \big(P  (t_0)\,|\,Q\big)}{\,\varrho_\star \big( P  (t_0+h), P  (t_0) \big)\, } \,=\, -  \,\big \|   \varphi(    {\bm \ell }_{t_0} )  \big \|_{\bH^1_\Theta(\cS, \ell Q)} \,<\,0\,.
$$
A simple comparison of the last two displays, via \textsc{Cauchy-Schwarz}, gives   the   steepest descent  property
$$
\lim_{h \downarrow 0} \frac{\, H^\Phi \big(P^\psi (t_0 +h)\,|\,Q\big) - H^\Phi  \big(P   (t_0)\,|\,Q\big)}{\,\varrho_\star \big( P^\psi (t_0+h), P (t_0) \big)\, } \,- \,\lim_{h \downarrow 0} \frac{\, H^\Phi \big(P  (t_0 +h)\,|\,Q\big) - H^\Phi \big(P  (t_0)\,|\,Q\big)}{\,\varrho_\star \big( P  (t_0+h), P  (t_0) \big)\, }  
$$
$$
= \,  \big \|   \varphi (   {\bm \ell }_{t_0}  ) \big \|_{\bH^1_\Theta(\cS, \ell Q)} + \bigg \langle    \, \varphi (   {\bm \ell }_{t_0} ) ,  \frac{  \psi_{t_0}  }{\, \big \|       \psi_{t_0}   \big \|_{\bH^1_\Theta(\cS, \ell Q)} \,}   \bigg \rangle_{\bH^1_\Theta(\cS, \ell Q) } \,\ge \, 0 
$$
 of the $\Phi-$relative entropy  with respect  to the  distance in   (\ref{Metric_Dist}),     along the original curve of   \textsc{Markov} Chain time-marginals.  Equality holds if, and only if, $\nabla \psi_{t_0}$ is a   negative   constant multiple of $\nabla  \varphi ({\bm \ell }_{t_0})$. \qed

\subsubsection{Non-uniqueness of the Flat Metric} 

There exist norms other than $\,\bH^{-1}_\Theta(\cS, {\bm\ell} Q)$ of (\ref{D11_too}), for which Theorem \ref{Steep_Desc_Gen} remains valid; see \textsc{Dietert} (2015) and Proposition \ref{Diet} below. Here we exhibit an explicit example.

\smallskip
Fix $\ell \in \cL_+(\cS)$ and consider the {\it ``modified weighted $\,\mathbb{H}^{-1}-$norm''}  given by 
\begin{align}
\label{eq:H-1-new}
	\big \| f \big\|_{{\widetilde \bH}^{-1}_\Theta(\cS, \ell Q)}^2
\,	:= \,\Bip{ \frac{1}{\vartheta_\ell} \nabla \big(\cK^{-1} f\big), \nabla \big( \cK^{-1} f \big) }_{\bL^2(\cZ,C)}
\end{align} 
for functions $f : \cS \to \R$ with $\,\sum_{x \in \cS} f(x) q(x) = 0\,$. This norm is never smaller than the original $\bH^{-1}_\Theta(\cS, \ell Q)-$norm as defined in (\ref{A58}); namely, 
\begin{align}
\label{eq:H-1-ineq}
\big \| f  \big \|_{{\widetilde \bH}^{-1}_\Theta(\cS, \ell Q)} \, \geq \,	\big \| f  \big \|_{\bH^{-1}_\Theta(\cS, \ell Q)}  \, .
\end{align}
And equality holds when $f = \cK \ell\,;$ to wit, 
\begin{align}
\label{eq:H-1-eq}
	\big \| \cK \ell \big \|_{{\widetilde \bH}^{-1}_\Theta(\cS, \ell Q)} \,= \,	\big \| \cK \ell \big \|_{\bH^{-1}_\Theta(\cS, \ell Q)}.
\end{align}
These two facts   imply that the curve $\,  ( P (t)  )_{t_0 \le t < \infty}$ from Theorem \ref{Steep_Desc_Gen}, which corresponds to  the  original backward   equation $ \,\partial {\bm \ell}_t = \nabla \cdot ( \nabla {\bm \ell}_t ) = {\cal K} {\bm \ell}_t $ of (\ref{A22}),   is   a curve of steepest descent also  with respect to the  modified  ${\widetilde \bH}^{-1}_\Theta(\cS, \ell Q)-$norms  in (\ref{eq:H-1-new}).

To prove the inequality \eqref{eq:H-1-ineq}, we use Proposition \ref{prop:H-min-char} and the identity $\cK f = \nabla \cdot ( \nabla f)$ to obtain 
\begin{equation}
\begin{aligned}
\label{eq:norm-ineq}
	\big \|  f  \big \|_{\bH^{-1}_\Theta(\cS, \ell Q)}^2 
		& 
		= \inf_{G : \cZ \to \R } 
    \bigg\{ \bip{ G, \vartheta_\ell G }_{\bL^2(\cZ,   
    C)} 
    	 \ : \  f + \nabla \cdot \big(\vartheta_\ell G\big)=0   \bigg \}
	 \\& \leq 
	 \Bip{\, \frac{1}{\vartheta_\ell} \nabla \big( \cK^{-1} f \big), \, \vartheta_\ell\Big( \frac{1}{\vartheta_\ell}  \nabla \big( \cK^{-1} f \big) \Big) }_{\bL^2(\cZ,   
	 C)}
 	= \big \| f \big \|_{{\widetilde \bH}^{-1}_\Theta(\cS, \ell Q)}^2 \, .
\end{aligned}
\end{equation}
 On the one hand, the equality \eqref{eq:H-1-eq} holds for $f = {\cal K} \ell ,$ since 
\begin{align*}
	\big\| \cK \ell \big\|_{{\widetilde \bH}^{-1}_\Theta(\cS, \ell Q)}^2
	 = \Bip{ \frac{1}{\vartheta_\ell} \nabla \ell , \nabla \ell }_{\bL^2(\cZ,C)}
	& = \Bip{ \nabla \varphi(\ell) , \nabla \ell }_{\bL^2(\cZ,C)}
	\\& = \Bip{ \nabla \varphi(\ell) , \vartheta_\ell \nabla \varphi(\ell) }_{\bL^2(\cZ,C)}
	 =  \big\|  \varphi(\ell) \big\|_{\bH^1_\Theta(\cS, \ell Q)}^2\,\, ;
\end{align*}
while, on the other hand, Proposition \ref{prop:H-min-char} yields
\begin{align*}
	\big\| \cK \ell \big\|_{\bH^{-1}_\Theta(\cS, \ell Q)}^2
	= 	\big\| \nabla \cdot ( \nabla \ell) \big\|_{\bH^{-1}_\Theta(\cS, \ell Q)}^2
	= 	\big\| \nabla \cdot \big(\vartheta_\ell \nabla \varphi(\ell)\big) \big\|_{\bH^{-1}_\Theta(\cS, \ell Q)}^2
	= \big\|  \varphi(\ell) \big\|_{\bH^1_\Theta(\cS, \ell Q)}^2 \,\, .
\end{align*}

 \begin{rem}
In general, the norms $\big \|  f  \big \|_{{\widetilde \bH}^{-1}_\Theta(\cS, \ell Q)}$ and $\big \|  f  \big \|_{\bH^{-1}_\Theta(\cS, \ell Q)}$ are different.
Indeed, it follows from Proposition \ref{prop:H-min-char} and \eqref{eq:norm-ineq} that  equality of norms  holds  if, and only if, $\frac{1}{\vartheta_\ell} \nabla \big( \cK^{-1} f \big)$ is a discrete gradient.  This is in general false, but   {\it is} true in the following very special cases: 
\begin{itemize}
\item At   equilibrium, i.e., with $\,\ell \equiv 1,$ we have $\,\vartheta_\ell \equiv 1$, so that $\,\frac{1}{\vartheta_\ell} \nabla \big( \cK^{-1} f \big) =  \nabla \big( \cK^{-1} f \big)$;
\item For the variance functional $\Phi (\xi) = \frac12( \xi^2 - 1)$ as   in Section \ref{sec5},     $\vartheta_\ell \equiv 1$ for every likelihood ratio $\ell\,$; 
\item If the state space $\cS$ consists of only two points, $\frac{1}{\vartheta_\ell} \nabla \big( \cK^{-1} f \big) $ is a discrete gradient, since this holds for every anti-symmetric function on $\,\cS \times \cS$.
\end{itemize}
\end{rem}

\section{Gradient Flows}
\label{sec8}

Let us reconsider now, {\it   under conditions of detailed balance,}  the results of Sections \ref{sec5}--\ref{sec8} from a different,   ``Riemannian''  point of view.  We shall see here   that,   {\it under  the conditions    (\ref{A24}), the curve $ ( P(t) )_{0 \le t < \infty}$ of time-marginal distributions     for  the Chain  evolves as a   gradient flow  of the relative $\Phi-$entropy.} This takes place in a suitable geometry on the space of probability measures,    in the spirit of the pioneering work by \textsc{Jordan, Kinderlehrer \& Otto} (1998).  We refer to  \textsc{Erbar  \& Maas} (2012, 2014), \textsc{Mielke} (2011, 2013) and to the expository paper \textsc{Maas} (2017), for an in-depth study of such issues in discrete spaces.

 \smallskip
We summon  from subsection \ref{sec2.1} the manifold $\, {\cal M = P_+ (S)}  \,$ of  probability vectors $  P = \big( p (x) \big)_{x \in {\cal S}}$ with strictly positive entries;      i.e., $ \, {\cal M }\,   $ is  the interior of the lateral face of the unit simplex in $\R^{n},$ with $\,n=|{\cal S}| $ the cardinality of the state-space.    
 We denote by $ {\cal M}_0 ({\cal S}) $ the collection of vectors $\, W = \big( w (x) \big)_{x \in {\cal S}}\,$ with total mass $\, \sum_{x \in {\cal S}} w (x)=0\,,$  viewed as ``signed measures", and observe that $\, {\cal M}\,$ is a relatively open subset of the $(n-1)-$dimensional affine space $\, P + {\cal M}_0 ({\cal S})= \{ P+W : W \in{\cal M}_0 ({\cal S}) \} ,$  for an arbitrary $\, P \in {\cal M}\,$. This observation allows us to identify the tangent space   at each $\, P \in {\cal M}\, $ with $\, {\cal M}_0 ({\cal S})\,$.

\subsection{Gradient Flow for  the Variance
}
\label{sec8a}

As a warmup, let us start as in Section \ref{sec5} with a derivation for the gradient flow property for the variance functional $\, {\cal M} \ni P \mapsto V(P|Q)\in \R_+ $ of (6.1). Following \textsc{de\,Giorgi}'s approach to  curves of maximal slope  (cf.\,\textsc{Ambrosio, Gigli \& Savar\'e}\,(2008)), we compute the dissipation of this functional along an arbitrary smooth curve $( \widetilde{P}_t)_{0 \le t < \infty} $ on ${\cal M} $; or equivalently, along the   curve $( \widetilde{\ell}_t)_{0 \le t < \infty} $ induced on the space $ {\cal L} $ by the likelihood ratios   $\,\widetilde{\ell}_t (y) = \widetilde{p}_t ( y)/ q(y), ~ y \in {\cal S}$.

  As in Section \ref{sec5},   we express the time-evolution of this likelihood ratio curve as $\,\partial \widetilde{\ell}_t = {\cal K} f_t= \nabla \cdot \big( \nabla f_t \big)\,$ in the manner of (\ref{A22}), for a suitable curve $\, ( f_t )_{0 \le t < \infty}\,$  of functions   $f_t : {\cal S} \to \R\,.$ This is uniquely determined up to an additive constant  on account of the Chain's irreducibility, and its discrete gradient provides the ``momentum vector field" of the motion. Recalling the consequences  $\widehat{{\cal K}}f = {\cal K}f = \nabla \cdot ( \nabla f )$ of detailed  balance   (\ref{A24}) and of (\ref{D3}),  as well as the fact that $\,\nabla \cdot \,$ is the adjoint of $- \nabla$ from (\ref{D6}), we obtain 
\begin{equation}
\begin{aligned}
  \partial V \big( \widetilde{P}_t \big| Q \big)
 	 & = \partial \big\| \widetilde{\ell}_t \big\|^2_{\bL^2(\cS,Q)} 
	 = 2 \bip{ \widetilde{\ell}_t , \partial \widetilde{\ell}_t}_{\bL^2(\cS,Q)} 
	 = 2 \big \langle \widetilde{\ell}_t , {\cal K} f_t \big \rangle_{\bL^2(\cS,Q)} 
	= -2 \bip{ \nabla \widetilde{\ell}_t , \nabla f_t }_{\bL^2(\cZ,C)}
\\ &	 \ge \, -2\, \big \| \nabla \widetilde{\ell}_t \big \|_{\bL^2(\cZ,C)} \, \big \| \nabla f_t \big \|_{\bL^2(\cZ,C)}
 \label{MS1}
  \,\ge \, - \, \big \| \nabla \widetilde{\ell}_t \big \|^2_{\bL^2(\cZ,C)} \,-\, \big \| \nabla f_t \big \|^2_{\bL^2(\cZ,C)}\,.
\end{aligned}\end{equation}
Equality holds in the first (resp., the second) of these inequalities if, and only if, $\nabla f_t $ and $\nabla \widetilde{\ell}_t$ are positively collinear (resp., have the same norm). In other words,   both these last two inequalities hold  as equalities if and only if $\,\nabla f_t =\nabla \widetilde{\ell}_t$, and this leads to the backwards equation (\ref{A22}) on account of detailed balance: 
$$
\partial \widetilde{\ell}_t  = {\cal K} f_t = \nabla \cdot \big( \nabla f_t \big) =\nabla \cdot \big( \nabla \widetilde{\ell}_t \big) = {\cal K} \,\widetilde{\ell}_t = \widehat{{\cal K}} \,\widetilde{\ell}_t\,.
$$
But the last two norms  in (\ref{MS1}) are then $ \,\| \nabla f_t \|_{\bL^2(\cZ,C)} = \|  \nabla ({\cal K}^{-1} (\partial \widetilde{\ell}_t)) \|_{\bL^2(\cZ,C)} = \| \partial \widetilde{\ell}_t  \|_{\mathbb{H}^{-1}(\cS, Q)}\,$ as well as  $\,  \| \nabla  \widetilde{\ell}_t   \|_{\bL^2(\cZ,C)} = \|   \widetilde{\ell}_t   \|_{\mathbb{H}^{1}(\cS,Q)} \,.$

\smallskip
In this manner we obtain from (\ref{MS1}) the following classical result. This provides another proof for Theorem \ref{Steep_Desc_Var} by identifying  the solutions of   $\, \partial P_t = {\cal K}^\prime P_t\,$   in (\ref{A20})  as  curves in the direction of steepest descent for the variance,    relative to the   distance induced by the $\bH^{-1}(\cS,Q)$ norm. But it also strengthens Theorem \ref{Steep_Desc_Var}, by identifying also the correct velocity  with which the gradient flow moves into this direction.

\begin{thm}
\label{prop9.1}
   For any given probability vector $P \in {\cal M} $  and  with  $\,{\bm \ell} \in {\cal L}\,$   the  likelihood ratio vector   corresponding to $P$,  we have  along any smooth curve $( \widetilde{P}_t)_{0 \le t < \infty} $ on ${\cal M} $ with $\widetilde{P}_0=P $ the inequality  
$$
\bigg( \partial V \big( \widetilde{P}_t \big| Q \big)+ \big \| \partial \widetilde{\ell}_t  \big \|^2_{\mathbb{H}^{-1}(\cS,Q)} \bigg)\bigg|_{t=0} \,\ge \, - \, \big \|     {\bm \ell}  \big \|^2_{\mathbb{H}^{1}(\cS,Q)} \, .
$$
 Equality holds   if, and only if, the curve $( \widetilde{P}_t)_{0 \le t < \infty} \subset {\cal M} $ satisfies the forward equation $ \partial \widetilde{P}_t = {\cal K}^\prime \widetilde{P}_t$ $($equi- valently, the  induced likelihood ratio  curve $( \widetilde{\ell}_t)_{0 \le t < \infty} \subset {\cal L}  $ satisfies the backward equation $ \partial \widetilde{\ell}_t = {\cal K}  \widetilde{\ell}_t).$
  \end{thm}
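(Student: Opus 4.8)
The plan is to read off the claimed inequality directly from the chain of (in)equalities established in display~(\ref{MS1}), evaluated at $t=0$, and then to analyze the equality cases carefully. First I would record the computation already carried out: for any smooth curve $(\widetilde P_t)$ on $\mathcal M$ with induced likelihood ratios $\widetilde\ell_t=\widetilde p_t/q$, writing $\partial\widetilde\ell_t=\mathcal K f_t=\nabla\cdot(\nabla f_t)$ for a curve $(f_t)$ (unique up to an additive constant by irreducibility), detailed balance gives $\widehat{\mathcal K}=\mathcal K$ and the adjointness $\langle\nabla g,\nabla h\rangle_{\bL^2(\cZ,C)}=-\langle g,\nabla\cdot(\nabla h)\rangle_{\bL^2(\cS,Q)}$, so that
\[
\partial V\big(\widetilde P_t\mid Q\big)=2\bip{\widetilde\ell_t,\mathcal K f_t}_{\bL^2(\cS,Q)}=-2\bip{\nabla\widetilde\ell_t,\nabla f_t}_{\bL^2(\cZ,C)}\ge -\big\|\nabla\widetilde\ell_t\big\|^2_{\bL^2(\cZ,C)}-\big\|\nabla f_t\big\|^2_{\bL^2(\cZ,C)},
\]
the last step by Cauchy--Schwarz followed by $2ab\le a^2+b^2$. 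Then I would identify the two norms: $\|\nabla\widetilde\ell_t\|_{\bL^2(\cZ,C)}=\|\widetilde\ell_t\|_{\bH^1(\cS,Q)}$ is precisely the definition (\ref{D5}), and $\|\nabla f_t\|_{\bL^2(\cZ,C)}=\|\nabla(\mathcal K^{-1}(\partial\widetilde\ell_t))\|_{\bL^2(\cZ,C)}=\|\partial\widetilde\ell_t\|_{\bH^{-1}(\cS,Q)}$ by (\ref{D10}), using that $f_t=\mathcal K^{-1}(\partial\widetilde\ell_t)$ modulo constants and that $\nabla$ kills constants. Evaluating at $t=0$, where $\widetilde\ell_0={\bm\ell}$, rearranges to exactly the displayed inequality of the theorem.

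Next I would treat the equality case. Equality in the Cauchy--Schwarz step holds iff $\nabla f_0$ and $\nabla\widetilde\ell_0=\nabla{\bm\ell}$ are positively collinear in $\bL^2(\cZ,C)$; equality in $2ab\le a^2+b^2$ holds iff $\|\nabla f_0\|_{\bL^2(\cZ,C)}=\|\nabla\widetilde\ell_0\|_{\bL^2(\cZ,C)}$. Both hold simultaneously iff $\nabla f_0=\nabla\widetilde\ell_0$, i.e.\ $f_0-\widetilde\ell_0$ is constant on $\mathcal S$ (again by irreducibility of the incidence graph, a function with vanishing discrete gradient is constant). Since $\mathcal K$ annihilates constants, this yields $\partial\widetilde\ell_0=\mathcal K f_0=\mathcal K\widetilde\ell_0=\widehat{\mathcal K}\widetilde\ell_0$, which is the backward equation (\ref{A22}) at $t=0$; equivalently, multiplying through by $q$ and using (\ref{A23a}), the forward equation $\partial\widetilde P_0=\mathcal K'\widetilde P_0$. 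For the converse, if the curve solves the backward equation then $f_t\equiv\widetilde\ell_t$ is an admissible choice (modulo constants), both inequalities collapse to equalities at every $t$, and in particular at $t=0$ equality holds in the displayed bound.

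The argument is essentially a transcription of (\ref{MS1}) plus a bookkeeping of the equality cases, so there is no single hard obstacle; the one point deserving care is the logical structure of the equality statement. The inequality as displayed involves only the value at $t=0$, so ``equality holds iff the curve satisfies the forward equation'' should be read as: equality at $t=0$ is equivalent to $\partial\widetilde P_t=\mathcal K'\widetilde P_t$ holding at $t=0$; combined with the standing hypothesis $\widetilde P_0=P$ and uniqueness of solutions to the linear ODE (\ref{A20}), this is what pins down the gradient-flow curve globally and hence recovers---and sharpens---Theorem~\ref{Steep_Desc_Var}. I would also note explicitly that $\partial\widetilde\ell_t\in\operatorname{Range}(\mathcal K)$ automatically, since $\partial\widetilde\ell_t$ has zero $Q$-mean (as $\sum_x q(x)\widetilde\ell_t(x)\equiv 1$), so the $\bH^{-1}$-norm appearing in the statement is finite and the use of $\mathcal K^{-1}$ is legitimate.
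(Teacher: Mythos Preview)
Your proposal is correct and follows essentially the same approach as the paper: the proof given in the text leading up to the theorem is precisely the computation in display~(\ref{MS1}) followed by the identification of the two norms and the analysis of equality via $\nabla f_t=\nabla\widetilde\ell_t$. Your additional remarks on the logical structure of the equality statement at $t=0$ and on $\partial\widetilde\ell_t\in\operatorname{Range}(\mathcal K)$ are helpful clarifications but do not depart from the paper's argument.
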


\subsection{Gradient Flow for the $\Phi-$Relative Entropy
}
\label{sec8b}
Let us examine now, how these ideas might work in the context of the generalized relative entropy functional 
   \begin{equation} 
\label{Z13}
{\cal M} \ni P \, \longmapsto \,   H^\Phi \big( P     \big|  Q \big) \, :=\,   \sum_{y \in {\cal S}} \, q(y) \, \Phi \Big( \frac{p ( y)}{q(y)} \Big)  \, \in \, [0, \infty)
\end{equation}
     corresponding to a convex function $\Phi,$  as in Section \ref{Gen}. We fix a smooth curve $( \widetilde{P}_t)_{0 \le t < \infty} $ on ${\cal M} $ emanating from a given $\widetilde{P}_0=  P \in {\cal M};$ and consider  the   induced curve $( \widetilde{\ell}_t)_{0 \le t < \infty} \subset {\cal L} $   of likelihood ratios $\widetilde{\ell}_t (y) = \widetilde{p}_t ( y)/ q(y), ~ y \in {\cal S}\,$ emanating from $\,{\bm \ell} = \ell _0$.

As in subsection \ref{subsec:LocSteep},   we cast the time-evolution of the likelihood ratio curve as a {\it continuity equation}  
    \begin{equation} 
    \label{Cont_Eq}
  \partial \widetilde{\ell}_t \,+\, \nabla \cdot \big( \widetilde \vartheta_t \nabla f_t \big) = 0
  \end{equation} 
where the ``velocity vector field'' is the discrete gradient of                                                                                      a suitable function $\,f_t : {\cal S} \to \R\,$, and   $\widetilde \vartheta_t$ is a shorthand for $\vartheta_{\widetilde{\ell}_t}$  from  \eqref{eq:def-vartheta}.   In the manner of (\ref{MS1}), this expresses the time-evolution of the $\Phi-$relative entropy  functional      $\,H^\Phi \big( \widetilde{P}_t \big| Q \big) = \sum_{y \in {\cal S}} \, q(y) \,\Phi \big(\widetilde{\ell}_t (y) \big) $   in  (\ref{Z13}) along the curve $\,\big( \widetilde{P}_t\big)_{0 \le t < \infty} \,  $  as 
\begin{equation}
\begin{aligned}
 \label{MS2}
\partial H^\Phi \big( \widetilde{P}_t \big| Q \big)
 & =   \big \langle \varphi ( \widetilde{\ell}_t ), \partial \widetilde{\ell}_t \big \rangle_{\bL^2(\cS,Q)} 
 = - \big \langle \varphi ( \widetilde{\ell}_t ), \nabla \cdot ( \widetilde\vartheta_t \nabla f_t ) \big \rangle_{\bL^2(\cS,Q)} 
 =    \bip{ \,\nabla   \varphi ( \widetilde{\ell}_t )  , \widetilde\vartheta_t\nabla f_t \, }_{\bL^2(\cZ,C)}
\\ & 
=    \bip{ \,\nabla  \varphi ( \widetilde{\ell}_t )  ,  \nabla f_t \, }_{\bL^2(\cZ, \widetilde\vartheta_t C)}
 \ge \, -  \big \| \nabla  \varphi ( \widetilde{\ell}_t )   \big \|_{\bL^2(\cZ, \widetilde\vartheta_t C)} \, \big \| \nabla f_t \big \|_{\bL^2(\cZ, \widetilde\vartheta_t C)}  
  \\&\ge   - \, \Big( \big \| \nabla \varphi ( \widetilde{\ell}_t ) \big \|^2_{\bL^2(\cZ, \widetilde\vartheta_t C)} + \big \| \nabla f_t \big \|^2_{\bL^2(\cZ, \widetilde\vartheta_t C)}\Big)\Big/2\,.
\end{aligned}
\end{equation}
Once again, equality holds if and only if $\,\nabla f_t =\nabla  \big( \varphi (\widetilde{\ell}_t) \big),$  and this leads  by detailed balance to the backwards equation 
$$
\partial \widetilde{\ell}_t 
	= - \nabla \cdot \big( \widetilde\vartheta_t \,\nabla f_t \big) 
	= \nabla \cdot \big(
		 \widetilde\vartheta_t \,\nabla (\varphi ( \widetilde{\ell}_t ))
		 			\big) 
	= \nabla \cdot \big( \nabla \widetilde{\ell}_t \big) 
	= {\cal K} \,\widetilde{\ell}_t 
	= \widehat{{\cal K}} \,\widetilde{\ell}_t
$$
of  (\ref{A22}). We have used here the elementary but crucial  consequence  
$\,
\widetilde\vartheta_t \,\nabla (\varphi ( \widetilde{\ell}_t ))= \nabla \widetilde{\ell}_t\, 
 $ 
of (\ref{eq:def-vartheta}),   a ``discrete chain rule" that sheds   light on our choice of   weight-function $\Theta^\Phi$ in (\ref{A51ab}). But the last two norms displayed in (\ref{MS2}) are $ \,\| \nabla f_t \|_{\bL^2(\cZ, \widetilde\vartheta_t C)} = \| \partial \widetilde{\ell}_t  \|_{_{\bH^{-1}_\Theta(\cS, \widetilde\ell_t Q)}}\,$ and  $ \, \| \nabla \varphi ( \widetilde{\ell}_t )  \|_{\bL^2(\cZ, \widetilde\vartheta_t C)}=\|  \varphi ( \widetilde{\ell}_t )   \|_{\bH^1_\Theta(\cS, \widetilde\ell_t Q)} \,.$  

\smallskip
We summarize the situation in Theorem \ref{prop9.2} below; this corresponds to Theorem \ref{Steep_Desc_Gen},  in the same manner as Theorem \ref{prop9.1} corresponds to Theorem \ref{Steep_Desc_Var}. Again, the \textsc{de Giorgi} argument (\ref{MS2}) gives not only the ``direction of steepest descent" into which the gradient flow travels, but also the velocity of this flow.

\begin{thm}
\label{prop9.2}
   For any given probability vector $P \in {\cal M},$  and  with  $\,{\bm \ell} \in {\cal L}\,$   the  likelihood ratio vector   corresponding to $P$,  we have  along any smooth curve $( \widetilde{P}_t)_{0 \le t < \infty} $ on ${\cal M} $ with $\widetilde{P}_0=P $ the inequality  

\begin{equation}
\label{9.4a}
\bigg( 2\, \partial H^\Phi \big( \widetilde{P}_t \big| Q \big)+ \big \| \partial \widetilde{\ell}_t  \big \|^2_{\bH^{-1}_\Theta(\cS, \bm\ell Q)} \bigg)\bigg|_{t=0} \,\ge \, - \, \big \|   \varphi (  {\bm \ell} ) \big \|^2_{\bH^{1}_\Theta(\cS, \bm\ell Q)} \, .
\end{equation}
\noindent
  Equality holds here if, and only if, the curve $( \widetilde{P}_t)_{0 \le t < \infty} \subset {\cal M} $ satisfies the forward equation $\,\partial \widetilde{P}_t = {\cal K}^\prime \widetilde{P}_t\,$ $($equivalently, the likelihood ratio  curve $\,( \widetilde{\ell}_t)_{0 \le t < \infty} \subset {\cal L} \, $ satisfies the backward equation $\,\partial \widetilde{\ell}_t = {\cal K} \, \widetilde{\ell}_t \,,$ and the corresponding ``driver" in (\ref{Cont_Eq}) is $\, f_t = -   \varphi (\widetilde{\ell}_t)\,  .)$  
  \end{thm}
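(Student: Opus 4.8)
The plan is to prove Theorem~\ref{prop9.2} exactly by running the chain of (in)equalities displayed in \eqref{MS2}, and then establishing the norm identifications in the last line of that display. Concretely, I would argue as follows. Fix a smooth curve $(\widetilde P_t)_{0 \le t < \infty} \subset {\cal M}$ with $\widetilde P_0 = P$, and pass to the induced curve of likelihood ratios $\widetilde{\ell}_t(y) = \widetilde p_t(y)/q(y)$, which lies in ${\cal L}$ and emanates from $\,{\bm \ell} = \widetilde{\ell}_0$. Because the Chain is irreducible, the signed measure $\partial \widetilde{\ell}_t$ (which has zero $Q$-mass, since $\widetilde{\ell}_t \in {\cal L}$ for all $t$) can be written as $\partial \widetilde{\ell}_t + \nabla \cdot(\widetilde\vartheta_t \nabla f_t) = 0$ for some $f_t : {\cal S} \to \R$, unique up to an additive constant; this is the discrete continuity equation \eqref{Cont_Eq}, and its solvability follows from the variational characterization in Proposition~\ref{prop:H-min-char} (equivalently, from the fact that $\nabla \cdot(\widetilde\vartheta_t \nabla \, \cdot\,)$ is, up to sign, a positive operator on functions modulo constants). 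I would note in passing that the paper already uses exactly this parametrization in subsection~\ref{subsec:LocSteep}.

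Next I would carry out the differentiation and the two Cauchy--Schwarz/Young steps. Differentiating $H^\Phi(\widetilde P_t | Q) = \sum_{y} q(y)\Phi(\widetilde{\ell}_t(y))$ gives $\partial H^\Phi(\widetilde P_t|Q) = \bip{\varphi(\widetilde{\ell}_t), \partial \widetilde{\ell}_t}_{\bL^2(\cS,Q)}$; substituting the continuity equation \eqref{Cont_Eq} and using the discrete integration-by-parts formula \eqref{D6} (valid here since detailed balance \eqref{A24} is in force) converts this into $\bip{\nabla\varphi(\widetilde{\ell}_t), \widetilde\vartheta_t \nabla f_t}_{\bL^2(\cZ,C)} = \bip{\nabla\varphi(\widetilde{\ell}_t), \nabla f_t}_{\bL^2(\cZ,\widetilde\vartheta_t C)}$, the last step being just the definition \eqref{A56} of the weighted inner product. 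Applying Cauchy--Schwarz and then $ab \ge -\tfrac12(a^2+b^2)$ in the weighted Hilbert space $\bL^2(\cZ,\widetilde\vartheta_t C)$ yields the two inequalities in \eqref{MS2}, with equality in both simultaneously precisely when $\nabla f_t = \nabla(\varphi(\widetilde{\ell}_t))$, i.e. $f_t = -\varphi(\widetilde{\ell}_t)$ modulo a constant. Feeding this back into \eqref{Cont_Eq} and invoking the ``discrete chain rule'' $\widetilde\vartheta_t \nabla(\varphi(\widetilde{\ell}_t)) = \nabla \widetilde{\ell}_t$ (which is the content of \eqref{eq:def-vartheta}) together with the concatenation formula \eqref{D3} and $\widehat{\cK} = \cK$ under \eqref{A24}, I get $\partial \widetilde{\ell}_t = \nabla\cdot(\nabla \widetilde{\ell}_t) = \cK \widetilde{\ell}_t = \widehat{\cK}\widetilde{\ell}_t$, which is the backward equation \eqref{A22}; correspondingly $\partial \widetilde P_t = \cK' \widetilde P_t$ via \eqref{A20}.

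Finally I would evaluate the right-hand side of \eqref{MS2} at $t=0$ and identify the two norms. For $\nabla f_t$: since $\nabla\cdot(\widetilde\vartheta_t \nabla f_t) = -\partial\widetilde{\ell}_t$ and $\partial\widetilde{\ell}_t$ has zero $Q$-mass, Proposition~\ref{prop:H-min-char} (applied with $G = \nabla f_t$, which is itself a discrete gradient and hence the unique admissible minimizer) gives $\|\nabla f_t\|_{\bL^2(\cZ,\widetilde\vartheta_t C)} = \|\partial\widetilde{\ell}_t\|_{\bH^{-1}_\Theta(\cS,\widetilde\ell_t Q)}$; evaluated at $t=0$ this is $\|\partial\widetilde{\ell}_t\|_{\bH^{-1}_\Theta(\cS,{\bm\ell} Q)}$. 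For $\nabla\varphi(\widetilde{\ell}_t)$: by the definition \eqref{A58} of the weighted Sobolev norm, $\|\nabla\varphi(\widetilde{\ell}_t)\|_{\bL^2(\cZ,\widetilde\vartheta_t C)} = \|\varphi(\widetilde{\ell}_t)\|_{\bH^1_\Theta(\cS,\widetilde\ell_t Q)}$, which at $t=0$ equals $\|\varphi({\bm\ell})\|_{\bH^1_\Theta(\cS,{\bm\ell} Q)}$; this is precisely the observation in Remark~\ref{rem8.3} that $I^\Phi(0) = \|\varphi({\bm\ell})\|^2_{\bH^1_\Theta(\cS,{\bm\ell}Q)}$. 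Putting the pieces together, the chain \eqref{MS2} at $t=0$ reads $2\,\partial H^\Phi(\widetilde P_t|Q)|_{t=0} \ge -\|\partial\widetilde{\ell}_0\|^2_{\bH^{-1}_\Theta(\cS,{\bm\ell}Q)} - \|\varphi({\bm\ell})\|^2_{\bH^1_\Theta(\cS,{\bm\ell}Q)}$, which rearranges to \eqref{9.4a}, with the stated equality case.

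The main obstacle is not any single inequality---each of those is a one-line application of Cauchy--Schwarz or Young---but the bookkeeping around the weighted norms: one must be careful that the continuity equation \eqref{Cont_Eq} is solvable with a \emph{gradient} velocity field, that Proposition~\ref{prop:H-min-char}'s minimizer is exactly that gradient (so that the $\bH^{-1}_\Theta$ norm really equals $\|\nabla f_t\|_{\bL^2(\cZ,\widetilde\vartheta_t C)}$ rather than merely bounding it), and that detailed balance \eqref{A24} is genuinely used twice---once to get the integration-by-parts \eqref{D6}, and once to collapse $\widehat{\cK}$ to $\cK$ in the equality case. The discrete chain rule $\widetilde\vartheta_t \nabla(\varphi(\widetilde{\ell}_t)) = \nabla\widetilde{\ell}_t$ is what makes the equality case reproduce \eqref{A22} on the nose, and flagging why $\Theta^\Phi$ in \eqref{A51ab} is defined the way it is would be the one conceptual point worth a sentence of emphasis.
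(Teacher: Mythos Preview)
Your proposal is correct and tracks the paper's own argument (the chain \eqref{MS2} and the norm identifications immediately following it) essentially line by line. One minor slip: the equality case of Cauchy--Schwarz plus Young gives $\nabla f_t = -\nabla\varphi(\widetilde{\ell}_t)$, not $+\nabla\varphi(\widetilde{\ell}_t)$ --- your stated conclusion $f_t = -\varphi(\widetilde{\ell}_t)$ is right, but the intermediate gradient identity has the wrong sign (the paper's text after \eqref{MS2} contains the same typo).
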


\subsection{A   Riemannian    Framework}
\label{sec8c}

 Let us take up these same ideas again, but now in a  Riemannian-geometric framework  as for instance in  
 \textsc{Maas} (2011), \textsc{Mielke} (2011). For any given probability vector $\, P \in {\cal M}\,, $ we   define the  ``likelihood ratio" vector $\,   {\bm \ell}  = \big( \ell (x) \big)_{x \in {\cal S}} \in \cL$ with strictly positive elements $\, \ell (x) := p(x) / q(x)$.   We consider then the Riemannian metric $(g_{\bm \ell})_{{\bm \ell} \in \cL}$ on $\cL$ induced by the scalar products 
 $$
 \, \,g_{\bm \ell} (\partial \ell_1, \partial \ell_2 ) \,:= \,
\bip{ \nabla \psi_1, \nabla \psi_2 }_{\bL^2(\cZ, \vartheta_\ell C)} \,  ,  
$$
where $\nabla \psi_i$ is the unique discrete gradient satisfying the equation of ``continuity type"   $\partial \ell_i = \nabla \cdot ( \vartheta_\ell \nabla \psi_i ) $ for $i=1, 2$.
In particular, $g_{\bm \ell}(\partial \ell, \partial \ell) = \| \nabla \psi \|^2_{\bL^2(\cZ, \vartheta_\ell C)}= \|\partial \ell\|_{\bH^{-1}_\Theta(\cS, \bm\ell Q)}^2$ on account of (\ref{eq:H-min-char}).

\medskip
The Riemannian gradient $\grad F$ of a smooth functional $F : \cL \to \R$ is then given by 
\begin{align*}
 \grad F = - \nabla \cdot \Big(\vartheta_\ell  \,\,\nabla D_\ell F \Big) \,, \qquad \text{where} \quad D_\ell F  \, \equiv  \,  \frac{\delta F}{\delta \ell}
\end{align*}
is the $\bL^2(\cS,Q)$-derivative defined by $\lim_{\varepsilon \to 0} \varepsilon^{-1}\big( F(\ell + \varepsilon \eta ) - F(\ell) \big) =   \ip{ D_\ell F, \eta}_{\bL^2(\cS,Q)}$ for $\eta : \cS \to \R$ with $\sum_{x \in \cX} \eta(x) q(x) = 0$.   
In particular, the gradient flow equation $\,\partial \ell = - \grad F(\ell)\,$ reads   
\begin{align}\label{eq:gf-eq}
	\partial \ell = \nabla \cdot \Big(\vartheta_\ell  \,\, \nabla D_\ell F \Big)  \ .
\end{align}

The Riemannian metric $g$ on $\cL$ can   be turned into a Riemannian metric $G$ on the manifold of probability measures $\cM$, via $\,G_P(\partial P_1, \partial P_2) := g_{\bm \ell} (\partial \ell_1, \partial \ell_2)$, where $P = {\bm \ell}\, Q$ and $\,\partial P_i  =  \partial \ell_i \, Q\,$ for $i = 1,2$.

 \begin{thm} {\bf (\textsc{Maas} (2011),  \textsc{Mielke} (2011)):}  
\label{JM}
Under the detailed balance conditions (\ref{A24}), and with $  \Theta $ the function  of (\ref{A51ab}), the Forward \textsc{Kolmogorov} equation  $\,\partial P(t)= {\cal K}^\prime P(t)\,$ in  (\ref{A20}) is the gradient flow of the $\Phi-$relative entropy in (\ref{Z13}) with respect to the Riemannian metric $G$ induced on the manifold $    {\cal M}  .$  
  \end{thm}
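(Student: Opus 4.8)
The plan is to verify directly, from the definition of the Riemannian gradient already recorded in the excerpt, that the Forward \textsc{Kolmogorov} flow coincides with the gradient-flow equation \eqref{eq:gf-eq} applied to the functional $F = H^\Phi(\cdot\,|\,Q)$. Since the metric $G$ on $\cM$ is defined by pullback from the metric $g$ on $\cL$ via $P = {\bm\ell}\,Q$, and the Forward \textsc{Kolmogorov} equation $\partial P(t) = {\cal K}'P(t)$ of \eqref{A20} is equivalent under this correspondence to the backward equation $\partial {\bm\ell}_t = {\cal K}\,{\bm\ell}_t = \widehat{\cal K}\,{\bm\ell}_t$ of \eqref{A22} (using detailed balance), it suffices to work on $\cL$ and show
\begin{align*}
	\partial {\bm\ell}_t \,=\, -\grad H^\Phi(\cdot\,|\,Q)({\bm\ell}_t)
	\,=\, \nabla\cdot\bigl( \vartheta_{{\bm\ell}_t}\,\nabla D_{{\bm\ell}_t}H^\Phi \bigr)\,.
\end{align*}

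First I would compute the $\bL^2(\cS,Q)$-derivative $D_\ell H^\Phi$ of the functional $\ell \mapsto H^\Phi(\ell\, Q\,|\,Q) = \sum_{x} q(x)\,\Phi(\ell(x))$ from \eqref{Z13}. Differentiating under the sum gives, for admissible perturbations $\eta$ with $\sum_x q(x)\eta(x) = 0$, the identity $\lim_{\varepsilon\to 0}\varepsilon^{-1}\bigl(H^\Phi(\ell+\varepsilon\eta) - H^\Phi(\ell)\bigr) = \sum_x q(x)\,\varphi(\ell(x))\,\eta(x) = \ip{\varphi(\ell),\eta}_{\bL^2(\cS,Q)}$, so that $D_\ell H^\Phi = \varphi(\ell)$ up to an additive constant (which is irrelevant, being annihilated by $\nabla$). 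Substituting into the gradient-flow equation \eqref{eq:gf-eq}, the right-hand side becomes $\nabla\cdot\bigl(\vartheta_\ell\,\nabla\varphi(\ell)\bigr)$. Now I would invoke the ``discrete chain rule'' already highlighted in the excerpt following \eqref{MS2}, namely $\vartheta_\ell\,\nabla(\varphi(\ell)) = \nabla\ell$, which is precisely the content of the defining formula \eqref{eq:def-vartheta} for $\vartheta_\ell = \Theta^\Phi(\ell(\cdot),\ell(\cdot))$ together with \eqref{A51ab}. Hence $\nabla\cdot\bigl(\vartheta_\ell\,\nabla\varphi(\ell)\bigr) = \nabla\cdot(\nabla\ell) = {\cal K}\ell$ by the concatenation formula \eqref{D3}, and under detailed balance ${\cal K} = \widehat{\cal K}$, so the gradient-flow equation is exactly $\partial\ell = {\cal K}\ell = \widehat{\cal K}\ell$, which is \eqref{A22}.

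To close the argument I would then translate back to the level of probability measures: the curve $({\bm\ell}_t)$ solving $\partial{\bm\ell}_t = \widehat{\cal K}\,{\bm\ell}_t$ is, by the substitution $p(t,x) = \ell(t,x)q(x)$ used to derive \eqref{A22} from \eqref{A20}, precisely the curve $(P(t))$ solving $\partial P(t) = {\cal K}'P(t)$; and since $G$ is the pushforward of $g$ under ${\bm\ell}\mapsto{\bm\ell}\,Q$, the identity $\grad_G H^\Phi = \bigl(\grad_g H^\Phi\bigr)Q$ holds, giving $\partial P(t) = -\grad_G H^\Phi(\cdot\,|\,Q)(P(t))$. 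I would also remark that well-posedness of the Riemannian gradient itself — i.e.\ that $\grad F$ is well-defined — rests on the fact (already used in subsections \ref{sec8b}, \ref{sec8c} and proved via \eqref{eq:H-min-char}) that for each $\ell\in\cL$ the operator $\partial\ell\mapsto$ (unique $\psi$ up to constants with $\partial\ell = \nabla\cdot(\vartheta_\ell\nabla\psi)$) is a well-defined linear isomorphism on the relevant finite-dimensional spaces, so that $g_{\bm\ell}$ is a genuine inner product; irreducibility guarantees this.

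The main obstacle is not any single hard estimate — the computation is short once the ingredients are in place — but rather making sure the ``discrete chain rule'' $\vartheta_\ell\nabla\varphi(\ell) = \nabla\ell$ is applied correctly edge by edge, since $\vartheta_\ell(x,y) = \Theta^\Phi(\ell(x),\ell(y))$ is precisely the difference quotient $\nabla\ell(x,y)/\nabla(\varphi\circ\ell)(x,y)$ only when $\ell(x)\neq\ell(y)$, with the limiting value $1/\Phi''(\ell(x))$ on the diagonal, so one must check the identity is an honest pointwise identity on all of $\cZ$ (including degenerate edges where $\ell(x) = \ell(y)$, where both sides vanish). A secondary, more bookkeeping-type point is confirming that the additive constant ambiguities in $D_\ell H^\Phi = \varphi(\ell)$ and in the ``driver'' $f_t$ are genuinely immaterial because they lie in the kernel of $\nabla$; this is immediate but worth stating. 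Everything else — the pullback compatibility of $g$ and $G$, the equivalence of the forward and backward equations, the role of \eqref{A24} in collapsing $\widehat{\cal K}$ to ${\cal K}$ — is already established in the excerpt and can be cited.
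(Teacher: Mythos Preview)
Your proposal is correct and follows essentially the same route as the paper's own proof: compute $D_\ell H^\Phi = \varphi(\ell)$, invoke the discrete chain rule $\vartheta_\ell\,\nabla\varphi(\ell) = \nabla\ell$, and conclude via the concatenation formula \eqref{D3} that the gradient-flow equation \eqref{eq:gf-eq} reduces to $\partial\ell = \cK\ell$. The additional remarks you make (edge-by-edge verification of the chain rule at degenerate edges, the irrelevance of additive constants, well-posedness of $g_{\bm\ell}$) are sound but not explicitly spelled out in the paper's shorter argument.
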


\noindent
  {\it Proof:} 
Let $\,( P(t))_{0 \le t < \infty}$ solve the Forward \textsc{Kolmogorov} equation  $\,\partial P(t)= {\cal K}^\prime P(t)\,$. 
By detailed balance, the associated likelihood ratio curve $\,\big({\bm \ell} (t)\big)_{0 \le t < \infty} \subset {\cal L} \, $ satisfies the backward equation $\,\partial {\bm \ell} (t) = {\cal K} \,{\bm \ell} (t)$.
In view of \eqref{eq:gf-eq}, we thus need to verify the identity
\begin{align*}
 	{\cal K} \,{\ell} = \nabla \cdot \Big(\vartheta_\ell \, \nabla D_\ell  h^\Phi \Big)  \ ,   
\end{align*}
where $h^\Phi : \cL \to \R$ is defined by $h^\Phi(\ell) = H^\Phi(\ell Q|Q)$.

 \smallskip
  For $\,{\bm \ell} \in \cL\,$ and $\,\eta : \cS \to \R\,$ with $\,\sum_{x \in \cS} \eta(x) q(x) = 0\,,$ we have the directional derivative computation 
\begin{equation} 
\label{E6}
  \frac{\ud ~}{\ud \varepsilon} \, h^\Phi \big( {\bm \ell} + \varepsilon \eta \, \big) \, \bigg|_{\varepsilon = 0}\,=\,\sum_{x \in {\cal S}} \, \eta(x)   \,\varphi \big( \ell(x) \big);\quad ~~~~\text{thus}~ ~~~~~  D_\ell  h^\Phi \equiv 
  \frac{\delta h^\Phi}{\delta \ell}
  = \varphi (  {\bm \ell} ) : = \Big( \varphi ( \ell (x) ) \Big)_{x \in {\cal S}} .   
\end{equation} 
Invoking  the ``discrete chain-rule'' 
$\,
\vartheta_\ell \,\nabla (\varphi ( {{\bm \ell}} ))= \nabla{\ell}
 $
we obtain the desired identity
\begin{align*}
~~~~~~~~~~~~~~~~~~~~~~~~~~~~~~~
\nabla \cdot \Big(\vartheta_\ell \, \nabla  D_\ell  h^\Phi  \Big) 
 = \nabla \cdot \Big(\vartheta_\ell \, \nabla (\varphi({\bm \ell}))\Big) 
 = \nabla \cdot \big( \nabla \ell \big) 
 = \cK \ell  .
  \qquad  \qquad  \qquad \qquad   \quad \qed
\end{align*}

 \smallskip

 \smallskip

 Theorem \ref{JM} has a converse,  developed in \textsc{Dietert} (2015) as follows.

  \begin{prop}
\label{Diet}
Suppose that there exists a $\,{\cal C}^1$ Riemannian metric on the manifold of probability vectors  ${\cal M} ,\,$ under which the Forward \textsc{Kolmogorov} equation $\,\partial P(t)= {\cal K}^\prime P(t)\,$ of (\ref{A20}) is the gradient flow   for the relative entropy in (\ref{H1}). Then the \textsc{Markov} Chain satisfies the detailed balance conditions (\ref{A24}).
  \end{prop}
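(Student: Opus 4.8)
The plan is to run the gradient-flow identity ``in reverse''. Suppose $G$ is a $\cC^1$ Riemannian metric on $\cM$ for which $\partial P(t) = \cK' P(t)$ is the gradient flow of $H(\,\cdot\,|Q)$. Pass, as throughout Section \ref{sec8}, to likelihood-ratio coordinates $\ell = p/q$, so that $G$ induces a metric $g$ on $\cL$ and the flow becomes $\partial \ell(t) = \cK\,\ell(t)$ in the notation of (\ref{A22}) with $\widehat\cK$ replaced by $\cK$ \emph{only if} detailed balance holds --- which is precisely what we must prove; a priori the correct evolution is the backward equation $\partial\ell(t)=\widehat\cK\,\ell(t)$. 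First I would evaluate the gradient-flow equation at the equilibrium point $\ell\equiv 1$ (equivalently $P=Q$), which is the unique minimiser of $H(\,\cdot\,|Q)$ and hence a stationary point of the flow for \emph{any} admissible metric. Linearising the flow around this stationary point, the right-hand side $\partial\ell = \widehat\cK\,\ell$ is already linear with generator $\widehat\cK$, while the gradient-flow side $-\grad H$ linearises to $-A^{-1}\,\mathrm{Hess}_1 H$, where $A$ is the (positive-definite, self-adjoint) matrix representing the metric $g$ at $\ell\equiv 1$ on the tangent space $\cM_0(\cS)$, and $\mathrm{Hess}_1 H$ is the Hessian of the relative entropy at equilibrium. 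A direct computation gives $\mathrm{Hess}_1 H(\eta,\eta) = \sum_x \eta(x)^2/q(x) = \|\eta\|^2_{\bL^2(\cS,1/Q)}$, i.e. a fixed symmetric positive-definite operator $B$ independent of the metric.

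The key step is then: the identity $\widehat\cK = -A^{-1}B$ on $\cM_0(\cS)$, with $A$ and $B$ both symmetric positive-definite with respect to the $\bL^2(\cS,Q)$ inner product (for $B$ this is immediate; for $A$ it is the definition of a Riemannian metric, once one checks that the $\bL^2(\cS,Q)$-pairing is the relevant one, which follows from the coordinate identification in subsection \ref{sec2.1}). Writing $A = B^{1/2} C B^{1/2}$ with $C$ symmetric positive-definite, we get $\widehat\cK = -B^{-1/2} C^{-1} B^{-1/2} B = -B^{-1/2} C^{-1} B^{1/2}$, so $B^{1/2}\widehat\cK\,B^{-1/2} = -C^{-1}$ is symmetric. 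Hence $\widehat\cK$ is self-adjoint with respect to the inner product $\langle\cdot,\cdot\rangle_{B^{-1}}$, which is exactly $\langle\cdot,\cdot\rangle_{\bL^2(\cS,Q)}$ since $B$ represents $\|\cdot\|^2_{\bL^2(\cS,1/Q)}$ and its inverse represents $\|\cdot\|^2_{\bL^2(\cS,Q)}$. By Remark \ref{Rem_2.1}, self-adjointness of $\widehat\cK$ on $\bL^2(\cS,Q)$ --- equivalently of $\cK$, via the adjoint relation (\ref{AdjRel}) --- is equivalent to the detailed-balance conditions (\ref{A24}). This completes the argument.

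I expect the main obstacle to be purely bookkeeping: making precise the claim that the Hessian of $H$ at equilibrium is represented by a \emph{fixed} positive-definite operator (so that the only metric-dependence in the linearised flow is through $A$), and verifying that the $\cC^1$ regularity of $G$ is enough to justify evaluating and linearising at the single point $P=Q$ (it is --- one only needs the value $A=G_Q$, not differentiability). A secondary subtlety is checking that no constant/zero-mean ambiguity in the ``driver'' $f$ interferes: all operators here act on $\cM_0(\cS)$, on which $\cK$ and $\widehat\cK$ are invertible by irreducibility, so the inversions above are legitimate. One should also note that restricting to the relative entropy (rather than a general $\Phi$-entropy) is harmless here, since $\Phi(\xi)=\xi\log\xi$ has $\Phi''(1)=1>0$, giving the nondegenerate Hessian needed; the same proof works verbatim for any admissible $\Phi$.
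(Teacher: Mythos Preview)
The paper does not prove this proposition; it simply cites \textsc{Dietert} (2015). Your approach---linearising the gradient-flow equation at the equilibrium $P=Q$ and reading off that the generator must be self-adjoint with respect to the fixed inner product determined by the Hessian of $H$ there---is the natural argument and is essentially what \textsc{Dietert} does. The core idea is sound: at the critical point the linearisation of $-\grad_G H$ is $-A^{-1}B$ with $A=G_Q$ and $B=\mathrm{Hess}_Q H$ both symmetric positive-definite, and this forces the linearised flow operator to be self-adjoint in the $B$-inner product, which is independent of the unknown metric and pins down detailed balance.

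Your exposition has coordinate slips you should clean up. You announce likelihood coordinates (flow $\partial\ell=\widehat\cK\,\ell$), but then compute the Hessian in probability coordinates: in likelihood coordinates the Hessian at $\ell\equiv 1$ is $\sum_x q(x)\,\eta(x)^2=\|\eta\|^2_{\bL^2(\cS,Q)}$, not $\sum_x \eta(x)^2/q(x)$. Separately, from $B^{1/2}\widehat\cK\,B^{-1/2}=-C^{-1}$ symmetric one deduces self-adjointness of $\widehat\cK$ with respect to $\langle B\,\cdot,\cdot\rangle$, not $\langle\cdot,\cdot\rangle_{B^{-1}}$. These two errors happen to cancel in your conclusion, but the clean version is simplest in likelihood coordinates with the $\bL^2(\cS,Q)$ ambient inner product: there $B=\mathrm{Id}$, so $\widehat\cK=-A^{-1}$ with $A$ self-adjoint positive-definite on $\bL^2(\cS,Q)$, whence $\widehat\cK$ is itself self-adjoint on $\bL^2(\cS,Q)$; by the adjoint relation (\ref{AdjRel}) this means $\widehat\cK=\cK$, i.e.\ detailed balance via Remark~\ref{Rem_2.1}. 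Your remarks on regularity (only $G_Q$ is needed, since the $\partial_P G$ term is multiplied by $dH_Q=0$) and on the restriction to $\cM_0(\cS)$ (where $\cK$, $\widehat\cK$ are invertible by irreducibility) are correct.
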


\subsection{The HWI Inequality}
\label{sec8d}

In the Riemannian framework of this Section, we present  
now a version of the celebrated HWI inequality  of \textsc{Otto  \& Villani} (2000). The basic ingredient is the notion of \textsc{Ricci} curvature in the present context, as   in Definition 1.3 of \textsc{Maas} (2011). We recast this definition    using the more general notion of $\Phi$-entropy   in Section \ref{sec7} -- rather than the classical entropy  which is, of course, a special case. 
  We recall also from subsection 3.1, Remark \ref{rem5.1} the manifold ${\cal M}$  of probability vectors on ${\cal S}$ with strictly positive entries, its closure $\overline{{\cal M}}$  of probability vectors with nonnegative entries, and the corresponding manifolds  ${\cal L}\,,$   $\overline{{\cal L}}$  of likelihood ratios.  

\begin{defn} {\bf Ricci$^\Phi$-curvature:} 
\label{RC}
   We say that our finite-state \textsc{Markov} Chain with generator $\cK$   has {\it non-local \textsc{Ricci} curvature bounded from below by $\kappa \in \R$   relative to $\Phi$} as above, and write \,Ricci$^\Phi(\cal K)\ge \kappa,\,$  if   for every constant-speed geodesic $(P_t)_{0 \le t \le 1}$ on the closed manifold  $\overline {\cal M}$ we have       the inequality   
\begin{equation}
\label{quadratic}
H^\Phi \big(P_t \big| Q\big) \le (1-t) H^\Phi \big(P_0 \big| Q\big) + tH^\Phi \big(P_1 \big| Q\big) - \frac{\,\kappa \,}{2}  \,t  (1-t)\,     \mathcal{W}^2   (P_0,P_1) ,\quad 0 \le t \le 1\,.
\end {equation}
\end{defn}

 Here  $\cal {W} (\cdot \,, \cdot) $ is   the geodesic distance with respect to the Riemannian metric of subsection 9.3. It admits   the \textsc{Benamou-Brenier}-type representation 
\begin{align}
  \mathcal{W}^2(P_0, P_1)
  \,= \,\inf
    \bigg\{ 
      \int_0^1 \big \| f_t  \big \|^2_{\bH^1_\Theta(\cS,  \widetilde\ell_t Q)} \, \ud t
      \ : \
      \partial \widetilde{\ell}_t \,+\, \nabla \cdot \big( \widetilde \vartheta_t \nabla f_t \big) = 0
    \bigg\},
\end{align}
with the infimum running over all solutions  to the continuity equation connecting $\,P_0 \equiv   \widetilde{\ell}_0 Q$ with $\,P_1 \equiv   \widetilde{\ell}_1 Q\,$; cf.\,\textsc{Maas} (2011),  \textsc{Erbar \& Maas} (2012), \textsc{Mielke} (2013). We shall apply the above inequality (\ref{quadratic}) in the form of the following  fact about functions of a real variable.

  \begin{prop}
\label{secondderivative}
Let   $\big( f(t) \big)_{0 \le t \le1}$ be a continuous, real-valued function such that 
\begin {equation}
\label{discr}
f(t+h) - 2f(t) + f(t-h) \ge \kappa h^2 
\end {equation}
\noindent 
  holds for some $\kappa \in \R$  and  every   pair $(t,h) \in  \R_+^2$  with $\, h \le t \le 1 - h . $    Suppose also    that f is right-differentiable at $t=0$ with derivative $f'(0)$.
Then 
\begin {equation}
\label{concl}
f(1) \ge f(0) + f'(0) + \frac {\kappa} {2}.
\end {equation}
 \end{prop}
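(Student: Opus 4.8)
The plan is to convert the discrete convexity-type inequality \eqref{discr} into a statement about the second difference quotient, then pass to a telescoping/integration argument along a dyadic (or uniform) partition of $[0,1]$. The key observation is that \eqref{discr} says precisely that the function $g(t) := f(t) - \tfrac{\kappa}{2} t^2$ is midpoint-convex on $[0,1]$: indeed $g(t+h) - 2g(t) + g(t-h) = \big(f(t+h)-2f(t)+f(t-h)\big) - \kappa h^2 \ge 0$. Since $f$ is continuous, so is $g$, and a continuous midpoint-convex function is convex on $[0,1]$ in the usual sense. Convexity of $g$ gives, for any $0 < t \le 1$, the familiar slope monotonicity $\frac{g(t)-g(0)}{t} \ge g'_+(0)$, where $g'_+(0) = f'(0)$ exists by the right-differentiability hypothesis on $f$ (the subtracted quadratic term contributes $0$ to the right derivative at $0$). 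Taking $t = 1$ yields $g(1) - g(0) \ge g'_+(0) = f'(0)$, i.e. $f(1) - \tfrac{\kappa}{2} - f(0) \ge f'(0)$, which rearranges to exactly \eqref{concl}.

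First I would record the reduction: set $g(t) = f(t) - \tfrac{\kappa}{2}t^2$ and verify from \eqref{discr} the midpoint-convexity inequality $g\big(\tfrac{a+b}{2}\big) \le \tfrac12 g(a) + \tfrac12 g(b)$ for all $0 \le a \le b \le 1$ (this is \eqref{discr} with $t = \tfrac{a+b}{2}$, $h = \tfrac{b-a}{2}$, noting the constraint $t \le 1 \pm h$ is met since $t \pm h \in \{a,b\} \subset [0,1]$). Second, I would invoke the classical theorem (Jensen / Sierpi\'nski) that a measurable — in particular continuous — midpoint-convex function on an interval is convex; continuity of $g$ follows from that of $f$. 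Third, from convexity of $g$ I would extract the right-derivative slope bound: for $0 < s < t \le 1$ the difference quotients satisfy $\frac{g(s)-g(0)}{s} \le \frac{g(t)-g(0)}{t}$, and letting $s \downarrow 0$ gives $f'(0) = g'_+(0) \le \frac{g(t)-g(0)}{t}$. Fourth, specialize to $t = 1$ and unwind the definition of $g$.

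If one prefers to avoid quoting the midpoint-convex-implies-convex theorem, an elementary self-contained route is available: iterate \eqref{discr} on a dyadic mesh. Writing $a_n := g(1) - g(1 - 2^{-n})$ for the last increment and using midpoint-convexity repeatedly, one shows the difference quotients $2^{n}\big(g(1-2^{-n}) - g(1-2^{-n+1})\big)$ form a monotone sequence, and a telescoping sum over the dyadic partition, combined with continuity of $g$ at $0$, recovers $g(1) - g(0) \ge \lim_{n\to\infty} 2^n\big(g(2^{-n}) - g(0)\big) = g'_+(0)$. Either way the substance is the same.

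The main obstacle, such as it is, is purely one of care rather than depth: one must check that the constraint "$t \le 1 \pm h$" in \eqref{discr} — equivalently $t + h \le 1$ and $t - h \ge 0$ after the harmless relabeling (the hypothesis is stated with $t,h \ge 0$ and $t \le 1+h$, $t \le 1-h$, so $t - h \ge 0$ forces $t \le 1$ and $h \le 1$; combined with $t + h \le 1$ the argument points $t \pm h$ stay in $[0,1]$) — is compatible with every application of midpoint-convexity on subintervals $[a,b] \subseteq [0,1]$ that the convexity argument requires, and that the right-differentiability at $0$ is genuinely all that is needed (no differentiability elsewhere). Everything else is routine.
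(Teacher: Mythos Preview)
Your proof is correct and follows essentially the same route as the paper: subtract the quadratic $\tfrac{\kappa}{2}t^2$ to reduce to the case $\kappa=0$, observe that the discrete inequality then amounts to (midpoint-)convexity of the resulting continuous function, and conclude via the tangent-line inequality at $t=0$. The paper states this in two lines, while you have carefully spelled out the midpoint-convexity step and the slope-monotonicity argument; your additional care with the domain constraint and the optional dyadic alternative are sound but not needed beyond what the paper sketches.
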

  
 \noindent
{\it Proof:}    If $f$ is twice differentiable, the condition (\ref{discr}) amounts to $f^{\prime \prime}  \ge \kappa$. For general $f$ and supposing   $\kappa = 0,$ condition (\ref{discr}) is tantamount to the convexity of $f$, so the inequality (\ref{concl}) becomes obvious. The case of general $\kappa$ follows by subtracting from $f(t)$ the quadratic $ \,\kappa \,  t^2 /2$. 
  \qed
 
 \medskip
 For a  constant-speed geodesic $(P_t)_{0 \le  t \le 1}$  joining $ P_0 \in \cal M$ with $P_1 \in \overline {\cal M}$  such that $\mathcal{W}    (P_0,P_1) =1$, the function $f(t)=H^{\Phi}(P_t | Q)$ satisfies the conditions of Proposition 9.5 under the assumption Ricci$^\Phi(\cal K)\ge \kappa$.    Indeed,    $(P_u)_{t-h \le u \le t+h}$ is then a constant-speed geodesic which joins $P_{t-h} $ with   $P_{t+h}$   and satisfies $\mathcal{W}    (P_{t-h},P_{t+h}) =2h$,   so   (\ref{quadratic}) applies with $t= 1/2 $.   The existence of   constant-speed geodesics and of $f'(0)$, follows respectively from Theorem 3.2 and   Proposition 3.4 in \textsc{Erbar  \& Maas} (2012).  
  
  \smallskip
    We   formulate now a version of the HWI inequality in the present context. This  sharpens slightly Theorem 7.3 of \textsc{Erbar  \& Maas} (2012), where   $P_1$ in the following  theorem is the invariant measure $\,Q\,$; and its proof       does not rely on the ``evolution variational inequality"   (the EVI of Theorem 4.5 in \textsc{Erbar  \& Maas} (2012)), but rather on the   very elementary  estimate of   Proposition \ref{secondderivative}.

 \begin{thm} {\bf  HWI Inequality of \textsc{Otto-Villani}:}  
\label{HWI} Under the assumptions of subsection 9.2,   suppose that  $ \, \textnormal{Ricci}^\Phi(\cal K) \ge \kappa \, $  holds   for some $\kappa \in \R$.    
  With $P_0,$   $P_1$    any probability measures in   $\cal M,$        $\overline {\cal M},$ respectively,  denote by $\mathcal{W} (P_0,P_1)$ their geodesic distance    and by $I^\Phi(P_0|Q)$   the $\Phi$-\textsc{Fisher} information   of (\ref{A54}) with $t=0$. We have then   
\begin{equation} 
\label{p5}
H^\Phi (P_0|Q) - H^\Phi(P_1|Q)\,\le \, \Big( I^\Phi(P_0|Q)\Big)^{1/2}  \ \mathcal{W} (P_0,P_1) - \frac{\kappa}{2} \, \mathcal{W}^2  (P_0,P_1) .
\end{equation}
\end{thm}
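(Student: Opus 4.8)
The plan is to run the classical Otto--Villani argument in the discrete Riemannian framework of subsection 9.3, using the constant-speed geodesic joining $P_0$ and $P_1$ as the test curve, and to extract the inequality from the scalar estimate in Proposition \ref{secondderivative} rather than from any evolution-variational inequality.

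First I would reduce to the case $\mathcal{W}(P_0,P_1)=1$ by rescaling: if $P_0=P_1$ there is nothing to prove (the left side is $0$, the right side is $\geq 0$ since $I^\Phi \geq 0$ and $\kappa \leq$ its value forces $-\tfrac{\kappa}{2}\mathcal{W}^2 \geq \cdots$ — actually one just notes both sides vanish when $\mathcal{W}=0$), so assume $\mathcal{W}(P_0,P_1)>0$ and let $(P_t)_{0\le t\le 1}$ be a constant-speed geodesic from $P_0$ to $P_1$, which exists by Theorem 3.2 of \textsc{Erbar \& Maas} (2012). Set $f(t) := H^\Phi(P_t\,|\,Q)$. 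As explained in the paragraph preceding the theorem, the lower Ricci bound $\mathrm{Ricci}^\Phi(\cK)\geq\kappa$ applied to the sub-geodesics $(P_u)_{t-h\le u\le t+h}$ (which have length $2h\,\mathcal{W}(P_0,P_1)$) yields, via \eqref{quadratic} with midpoint parameter $1/2$, the discrete semiconvexity estimate $f(t+h)-2f(t)+f(t-h)\geq \kappa\, h^2\,\mathcal{W}^2(P_0,P_1)$. Moreover $f$ is right-differentiable at $t=0$ by Proposition 3.4 of \textsc{Erbar \& Maas} (2012). Hence Proposition \ref{secondderivative}, applied to $g(t):=f(t)/\mathcal{W}^2(P_0,P_1)$ with the constant $\kappa$, gives
\begin{equation}
\label{HWIstep}
f(1) \,\geq\, f(0) \,+\, f'(0) \,+\, \frac{\kappa}{2}\,\mathcal{W}^2(P_0,P_1)\,,
\end{equation}
which rearranges to $H^\Phi(P_0\,|\,Q)-H^\Phi(P_1\,|\,Q) \leq -f'(0) - \tfrac{\kappa}{2}\mathcal{W}^2(P_0,P_1)$.

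It remains to bound $-f'(0)$ by $\big(I^\Phi(P_0|Q)\big)^{1/2}\,\mathcal{W}(P_0,P_1)$. For this I would differentiate $f(t)=H^\Phi(\widetilde\ell_t Q\,|\,Q)$ along the geodesic using the continuity-equation representation $\partial\widetilde\ell_t + \nabla\cdot(\widetilde\vartheta_t\nabla\psi_t)=0$ of subsection 9.2, which is exactly the computation in \eqref{MS2}: $f'(0) = \langle \nabla\varphi({\bm\ell}), \nabla\psi_0\rangle_{\bL^2(\cZ,\vartheta_{\bm\ell} C)}$, where ${\bm\ell}$ is the likelihood ratio of $P_0$. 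By Cauchy--Schwarz in $\bL^2(\cZ,\vartheta_{\bm\ell}C)$,
\begin{equation*}
-f'(0) \,\leq\, \big\|\nabla\varphi({\bm\ell})\big\|_{\bL^2(\cZ,\vartheta_{\bm\ell}C)}\cdot\big\|\nabla\psi_0\big\|_{\bL^2(\cZ,\vartheta_{\bm\ell}C)}\,.
\end{equation*}
The first factor equals $\big(I^\Phi(P_0|Q)\big)^{1/2}$ by Remark \ref{rem8.3} (where $I^\Phi(t)=\|\varphi({\bm\ell}_t)\|_{\bH^1_\Theta}^2 = \|\nabla\varphi({\bm\ell}_t)\|^2_{\bL^2(\cZ,\vartheta_{\ell_t}C)}$). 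The second factor is the metric speed $g_{\bm\ell}(\partial\ell_0,\partial\ell_0)^{1/2}$ of the geodesic at $t=0$, which for a constant-speed geodesic on $[0,1]$ equals its total length $\mathcal{W}(P_0,P_1)$. Combining this with \eqref{HWIstep} gives \eqref{p5}.

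**The main obstacle** is the differentiability/regularity step: justifying that $f'(0)$ exists, that it is given by the formula from \eqref{MS2}, and that the geodesic speed is constant and equal to $\mathcal{W}(P_0,P_1)$ — these are facts about the discrete Riemannian structure of \textsc{Maas} (2011) and \textsc{Erbar \& Maas} (2012) that I would cite (Theorem 3.2 and Proposition 3.4 of the latter) rather than reprove. A minor subtlety is that $P_1$ is allowed to lie on the boundary $\overline{\cM}\setminus\cM$, so the functional $H^\Phi(\cdot|Q)$ and the metric may degenerate near $t=1$; but since we only need right-differentiability at $t=0$ (where $P_0\in\cM$) and the semiconvexity estimate \eqref{discr} (which is a statement about geodesic convexity that holds up to the boundary by the Ricci assumption on $\overline{\cM}$), this causes no real trouble. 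Everything else is Cauchy--Schwarz and the elementary scalar lemma.
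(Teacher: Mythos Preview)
Your proposal is correct and follows the same overall strategy as the paper: take a constant-speed geodesic, verify that $f(t)=H^\Phi(P_t\mid Q)$ satisfies the discrete semiconvexity hypothesis of Proposition~\ref{secondderivative} (with $\kappa$ replaced by $\kappa\,\mathcal{W}^2(P_0,P_1)$), and then bound $-f'(0)$ by $\big(I^\Phi(P_0\mid Q)\big)^{1/2}\,\mathcal{W}(P_0,P_1)$. The only substantive difference is in this last step: the paper obtains the bound on $f'(0)$ by a reparametrization $u=(w/i^{1/2})\,t$ that equalizes the two norms and then invokes the gradient-flow inequality \eqref{9.4a} of Theorem~\ref{prop9.2}, whereas you apply \textsc{Cauchy--Schwarz} directly to the inner-product expression in \eqref{MS2} (equivalently, Proposition~\ref{Prop8.3}) together with Remark~\ref{rem8.3} and the identification of $\|\nabla\psi_0\|_{\bL^2(\cZ,\vartheta_{\bm\ell}C)}$ with the metric speed. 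Your route is slightly more direct and avoids the reparametrization, but the two arguments are equivalent; note also that your opening remark about ``reducing to $\mathcal{W}=1$'' is never actually carried out and is unnecessary given how you proceed.
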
 

\noindent
{\it Proof:}   We follow the argument  in Theorem 4.11 of  \textsc{Karatzas,   Schachermayer  \& Tschiderer}  (2020), where the HWI inequality is established for  diffusions in $\R^n$.    We let $(P_t)_{0 \le t \le1} \subset \overline{{\cal M}}$ be a constant-speed geodesic   of probability measures joining $P_0$ with $P_1$ (which we know   exists, by Theorem 3.2 of \textsc{Erbar  \& Maas} (2012)),   denote by      $(\ell_t)_{0\le t\le 1} \subset \overline{{\cal L}}\,$   the corresponding   likelihood-ratio curve,  consider the function 
$ \,f(t):=H^{\Phi}(P_t | Q)\, ,  ~~  0 \le t \le1  ,$ and pass to the parametrization 
$$
  u=u(t)=\frac{w}{i^{1/2}}t\,   , \qquad 0 \le u \le \frac{w}{i^{1/2}},
 $$
where $\,i=I^\Phi (P_0)=\mathcal{E} (\ell_0,\phi(\ell_0))\,$ and $w= \mathcal{W} (P_0,P_1)$.
  We set $g(u) = g(u(t)) = f(t)$. Recalling  the likelihood ratio $\ell_t$ corresponding to $P_t\,,$   consider the continuous curve   of likelihood ratios
$$
  \widetilde \ell (u) = \ell_{t}\,  , \qquad 0 \le u \le \frac{w}{i^{1/2}} 
$$
so that $\widetilde \ell (0) = \ell_0$ and $\widetilde \ell (w\,i^{-1/2}) = \ell_1$, as well as the corresponding curve $\widetilde{P} (u), ~ 0 \le u \le w\, i^{- 1/2}$ of probabilities.   Since  $(P_t)_{0 \le t \le 1}$ is a   geodesic of constant speed   $w$    with $\,{\bm \ell} = \ell_0,$  we have
$$
\big \|  \partial \ell_0 \big \|_{H^{-1}_\Theta (\mathcal{S} , \ell Q)} = \mathcal{W} (P_0 , P_1 ) = w , \qquad \text{thus} \qquad \big \|   \partial \widetilde \ell (0)  \big \|^{2}_{H^{-1}_\Theta (\mathcal{S} , \ell Q)} = i \,;
$$
  this last display gives the second term in   (\ref{9.4a}).  As for the  term $  \, \big \|   \varphi (  {\bm \ell} ) \big \|^2_{\bH^{1}_\Theta(\cS, \bm\ell Q)}$ in (\ref{9.4a}),    the expression  (\ref{A54}) and Remark \ref{rem8.3}  give    $\, \big \|   \varphi (\widetilde {\bm \ell} (0))\big \|^2_{H^{1}_\Theta (\mathcal{S}, {\bm \ell} Q)} = \mathcal{E} (\ell_0, \varphi(\ell_0)) = i. $ 
In this manner, (\ref{9.4a}) leads to the  inequality 
\begin{equation}
 \label{ini1}
      g'(0)=\partial H^\Phi (\widetilde P_u | Q)\Big|_{u=0} \ge -i,     
    \end{equation} 
  where   the existence of the right-derivative $g'(0)$ is assured by Proposition 3.4 of \textsc{Erbar  \& Maas} (2012). 
  
  Going back  to the original parametrization, we  obtain    $   f'(0) \ge - w i^{1/2}.
  $   The assumption   Ricci$^\Phi ({\cal K})\ge \kappa$ implies that $f$  satisfies (\ref{discr}), with $\kappa$ replaced by $\kappa w^2$. In conclusion, (\ref{concl}) gives  
  $$
  H^\Phi (P_1 | Q) \ge H^\Phi (P_0 | Q) - i^{1/2} w + \frac{\kappa}{2} w^2,
  $$
 which is tantamount to the HWI inequality \eqref{p5}. \qed
  
 \begin{rem}
 As is well known (e.g., \textsc{Erbar  \& Maas} (2012)), the HWI inequality leads directly to the corresponding versions of the Modified Log-\textsc{Sobolev} and \textsc{Talagrand} inequalities,   by taking $  \Phi (\cdot) = \Psi (\cdot)  $ as in (7.13)     and $P_1 = Q$.    \textsc{Poincar\'{e}}-type inequalities also follow this way, by linearizing the Modified Log-\textsc{Sobolev} inequality.   
 \end{rem}

 \smallskip
 The HWI inequality \eqref{p5} can be sharpened. In the above proof, we estimated the slope of the function   $t \mapsto H^\Phi (P_t | Q)$ at $t= 0$ in terms of the ``worst case", i.e., the steepest possible descent; this  led  to the  square root of the  \textsc{Fisher} information,  by Theorem \ref {Steep_Desc_Gen}.  But Propositions  \ref{Prop8.3}, \ref{Proj_Gen} allow us     to calculate the slope of this function  with respect to the norm ${H^{-1}_\Theta (\cal S , \ell Q)},$ which induces the local Riemannian metric at $\ell = \ell_{0}$. We   obtain in this manner the following more precise result, in the spirit of \textsc{Otto  \& Villani} (2000),  \textsc{Cordero-Erausquin} (2002) or \textsc{Karatzas, Schachermayer \& Tschiderer} (2020).

 \begin{prop}
\label{HWIsharp} Under the assumptions of Theorem \ref{HWI}, suppose in addition that the curve $(P_t)_{0 \le t \le 1}$ is driven by a continuous function $(\psi_t)_{0 \le t \le 1}$ via the ``discrete continuity equation" 
\begin {equation}
\label{ciscont}
\,\partial \ell^\psi_t + \nabla \cdot ( \vartheta_{\ell_t} \nabla \psi_t ) = 0.
\end{equation}
 Then with $\,{\bm \ell} = \ell_0,$   we have the inequality 
\begin{equation} 
\label{p5sharp}
H^\Phi (P_0|Q) - H^\Phi(P_1|Q)\,\le \, \mathcal{W} (P_0,P_1)\, \bigg \langle    \,   \varphi( {\bm \ell } ) ,  \frac{  \psi_{0}  }{\, \big \|       \psi_{0}   \big \|_{\bH^1_\Theta(\cS, {\bm \ell } Q)} \,}   \bigg \rangle_{\bH^1_\Theta(\cS, {\bm \ell } Q)}    - \frac{\kappa}{2} \, \mathcal{W}^2  (P_0,P_1) .
\end{equation}
\end{prop}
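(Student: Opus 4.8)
The plan is to follow the same chain of reasoning used in the proof of Theorem~\ref{HWI}, but to retain the sharper estimate furnished by Proposition~\ref{Prop8.3} instead of collapsing it via Cauchy--Schwarz into the Fisher information. First I would let $(P_t)_{0 \le t \le 1} \subset \overline{\cal M}$ be the constant-speed geodesic joining $P_0$ with $P_1$, assumed (by the hypothesis of the present Proposition) to be driven by the continuous curve $(\psi_t)_{0 \le t \le 1}$ through the discrete continuity equation~(\ref{ciscont}). Set $f(t) := H^\Phi(P_t \mid Q)$ for $0 \le t \le 1$. As in the proof of Theorem~\ref{HWI}, the lower \textsc{Ricci}$^\Phi$ bound $\kappa$ forces $f$ to satisfy the second-difference inequality~(\ref{discr}) with constant $\kappa\,{\cal W}^2(P_0,P_1)$, and $f$ is right-differentiable at $t=0$ by Proposition~3.4 of \textsc{Erbar \& Maas} (2012); hence Proposition~\ref{secondderivative} applies after the rescaling $t \mapsto {\cal W}(P_0,P_1)\,t$, giving
\begin{equation*}
H^\Phi(P_1 \mid Q) \ge H^\Phi(P_0 \mid Q) + f'(0) + \frac{\kappa}{2}\,{\cal W}^2(P_0,P_1),
\end{equation*}
so the whole matter reduces to identifying $f'(0)$ precisely (rather than merely bounding it below by $-i$).

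The key step is therefore the exact evaluation of $f'(0) = \partial H^\Phi(P_t\mid Q)\big|_{t=0}$. Here I invoke Proposition~\ref{Prop8.3}: along the curve $(\ell^\psi_t)$ driven by $(\psi_t)$ via~(\ref{ciscont}), one has $\partial H^\Phi(P^\psi_t \mid Q) = \bip{\varphi(\ell^\psi_t),\psi_t}_{\bH^1_\Theta(\cS,\ell_t Q)}$, so that at $t=0$, with ${\bm\ell} = {\bm\ell}_0 = \ell$,
\begin{equation*}
f'(0) = \Bip{\varphi({\bm\ell}_0),\,\psi_0}_{\bH^1_\Theta(\cS,\ell Q)}.
\end{equation*}
On the other hand, Proposition~\ref{Proj_Gen} (its second assertion~(\ref{A61})) together with the constant-speed property of the geodesic identifies the speed: $\|\partial\widetilde\ell(0)\|_{\bH^{-1}_\Theta(\cS,\ell Q)} = \|\psi_0\|_{\bH^1_\Theta(\cS,\ell Q)} = {\cal W}(P_0,P_1)$. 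Dividing and multiplying $f'(0)$ by $\|\psi_0\|_{\bH^1_\Theta(\cS,\ell Q)}$ then yields
\begin{equation*}
f'(0) = {\cal W}(P_0,P_1)\,\Bigl\langle \varphi({\bm\ell}_0),\,\frac{\psi_0}{\|\psi_0\|_{\bH^1_\Theta(\cS,\ell Q)}}\Bigr\rangle_{\bH^1_\Theta(\cS,\ell Q)},
\end{equation*}
and substituting this into the displayed consequence of Proposition~\ref{secondderivative} gives exactly~(\ref{p5sharp}) after rearranging.

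I do not anticipate a genuinely hard obstacle, since all the analytic content is imported from earlier results; the two points requiring care are (i) checking that Proposition~\ref{Prop8.3} and Proposition~\ref{Proj_Gen} apply to the geodesic curve, i.e.\ that the geodesic can be written in the form $(\ell^\psi_t)$ with $\psi_t$ the continuous driver appearing in the hypothesis, which is precisely the standing assumption of the present Proposition and is guaranteed by the structure of the Benamou--Brenier-type geodesics in \textsc{Erbar \& Maas} (2012); and (ii) verifying that the geodesic, which a priori lives on the closed manifold $\overline{\cal M}$, stays in the regime where $\vartheta_\ell$ and the weighted norms are well behaved along the interior portion $t \in (0,1)$, so that the one-sided derivative at $t=0$ computed via Proposition~\ref{Prop8.3} is legitimate. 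Consistency with Theorem~\ref{HWI} is then immediate: applying Cauchy--Schwarz to the inner product in~(\ref{p5sharp}) and using $\|\varphi({\bm\ell}_0)\|_{\bH^1_\Theta(\cS,\ell Q)}^2 = I^\Phi(P_0\mid Q)$ from Remark~\ref{rem8.3} recovers~(\ref{p5}), so~(\ref{p5sharp}) is indeed a sharpening. \qed
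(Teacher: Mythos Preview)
Your proposal is essentially the same as the paper's proof: both compute $f'(0)$ exactly via Proposition~\ref{Prop8.3} (the paper cites this through equation~(\ref{inner}), which is just Proposition~\ref{Prop8.3} combined with Proposition~\ref{Proj_Gen}) and insert it into the framework of the proof of Theorem~\ref{HWI} in place of the crude lower bound on the initial slope. You work directly with $f$ whereas the paper routes through the reparametrized function $g$, but this is only a cosmetic difference.
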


\begin {proof} From (\ref{inner}), the slope of the function $H^\Phi (P_t | Q)$ with respect to  the norm ${H^{-1}_\Theta (\cal S , {\bm \ell } Q)}$ on $\mathcal{M}$,  which induces the local Riemannian metric at $(t, {\bm \ell } )=(0,  \ell_{0})$, is given by the bracket term on the right hand side of (\ref{p5sharp}). Hence, we may replace the inequality (\ref{ini1}) by the more precise equality
\begin{equation} 
g'(0) = - \bigg \langle    \,   \varphi( {\ell }_{0}) ,  \frac{  \psi_{0}  }{\, \big \|       \psi_{0}   \big \|_{\bH^1_\Theta(\cS, {\bm \ell } Q)} \,}   \bigg \rangle_{\bH^1_\Theta(\cS, {\bm \ell } Q)}.
\end{equation}
The rest of the proof of Theorem \ref{HWI} can be repeated verbatim, to  arrive as (\ref{p5sharp}) instead of  (\ref{p5}).       \end{proof}

\begin{rem}
\label{rem9.2}
 {\it What happens  when $P_0$ is on the boundary of $\mathcal M$, as in Remark \ref{rem5.1}? That is, when the set $ \,\mathcal N_0 = \{ x \in {\cal S}: P_0(x)=0\}$ is non-empty?} To be specific, let us  concentrate on the classical entropy  $\,\Phi (\ell)  = \ell \, \log \ell\,$. Then  the \textsc{Fisher}  information $ I^\Phi(P_0|Q)$ is infinite, and  the HWI inequality (\ref{p5}) holds trivially. On the other hand, the refined version (\ref{p5sharp}) may deliver some nontrivial information. 
 
 Indeed, suppose that $(P_t)_{0 \le t \le 1}$ is driven by a continuous function $(\psi_t)_{0 \le t \le 1}$via the ``discrete continuity equation" (\ref{ciscont}). If $\psi_0$ also vanishes on $\mathcal N_0\,,$   the bracket term in (\ref{p5sharp}) is finite (via the rule $ 0 \cdot \infty = 0$). As we assume that $\psi (\cdot)$ is continuous (actually, we only need this continuity at $t=0$), we can still apply the above argument and conclude that (\ref{p5sharp}) holds, yielding a nontrivial result. The geometric interpretation of   $\psi_0$ vanishing on $\mathcal N_0\,, $ is that the curve $(P_t)_{0 \le  t \le 1}$ starts ``tangentially to the boundary of $\mathcal M$", when departing from $P_0$ at this boundary.
\end{rem}

\section{Countable State-Space}
\label{sec9}

It is     well known that the results of Sections \ref{sec1} and \ref{sec2} hold also for   countably infinite state-spaces  ${\cal S}\,;$ see Chapters 2, 3 in \textsc{Norris} (1997) and \textsc{Liggett} (2010). In particular, the ergodic property (\ref{A20aa}) holds at least for bounded functions $\,f : {\cal S} \to \R\,.$  The crucial Proposition  \ref{FJ}    also remains  valid.

\smallskip
{\it Propositions \ref{DE_Traj}, \ref{DE} carry over to    countable state-spaces  under the  assumption}  $ V  \big(P(0)\,|\,Q\big) < \infty  $.  To see this, we start by observing  that we can guarantee now {\it prima facie} only the local martingale property of the processes $\,\widehat{M}\,$ in (\ref{A41a}).  Still, we can localize    $\,\widehat{M} \,   $  by an increasing sequence $  \big\{ \sigma_n \big\}_{n \in \N} \,$ of $\,\widehat{\mathbb{G}}-$stopping-times with values in $[0,T]$ and $ \, \lim_{n \to \infty} \uparrow  \sigma_n = T,$ and   create  the   bounded $(\widehat{\mathbb{G}}, \mathbb{Q})-$martingales  $\,\widehat{M}  (s \wedge \sigma_n)\,, ~ 0 \le s \le T.$  Taking expectations in (\ref{A41a}) with $\,s=\sigma_n\,$, \,then letting $n \to \infty$ and using monotone convergence, the  $ \mathbb{Q}-$submartingale property of   $\,\ell^2 \big( T-s, \widehat{X} (s) \big) \,, ~~ 0 \le s \le  T$  from Proposition \ref{FJ}, and optional sampling, we obtain  from  (\ref{E1}) the inequality 
$$
\mathbb{E^Q} \big[\ell^2 \big( T ,  X  (T) \big) \big]+ \int_0^T   
2 \, {\cal E} \big( \ell_t\,  ,  \ell_t  \,\big)\,
\ud t \,= \,\lim_{n \to \infty} \uparrow \mathbb{E^Q} \big[\ell^2 \big( T-\sigma_n ,  \widehat{X}  (\sigma_n) \big) \big] \,\le\, \mathbb{E^Q} \big[\ell^2 \big( 0 ,  X  (0) \big) \big].
$$
 But the reverse of this last inequality also holds, on account of  \textsc{Fatou}'s Lemma; thus (\ref{A41d}) follows for  countable state-spaces as well, and $\,\widehat{M} \,   $ is seen to be a true $(\widehat{\mathbb{G}}, \mathbb{Q})-$martingale. Then $\, \lim_{t \to \infty} V  \big(P(t)\,|\,Q\big)=0$, and with it \eqref{A41e}, are  proved for a countable state-space in the   manner of Proposition  \ref{to zero} below.

\subsection{Relative Entropy Dissipates   all the way down to Zero}
\label{sec101}

{\it When the  state-spaces ${\cal S}$ is countably infinite, the results of Section \ref{sec6}  pertaining to the relative entropy need    the additional assumption}   
  \begin{equation} 
  \label{Z0}
  H \big(P(0)\big| Q \big)  = \sum_{y \in {\cal S}} \, p (0,y) \, \log \left( \frac{\,p(0,y)\,}{q(y)}  \right)< \infty\,.
  \end{equation} 
Then everything   goes through as before, including  
 the non-negativity and decrease claims in (\ref{A30}) -- except for the argument establishing (\ref{A30'}), which uses the finiteness of the state-space in a crucial manner.

 Here  is a proof for this result in the countable case.

 \begin{prop}
 \label{to zero}
 The dissipation of relative entropy all the way down to zero, as in (\ref{A30'}), holds  for a countable state-space under the condition (\ref{Z0}).
 \end{prop}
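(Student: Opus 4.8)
The plan is to imitate the finite-state argument but handle the non-compactness by a truncation/localization argument, exactly as was just done for the variance. By the monotonicity already established in the countable setting, the limit $H_\infty := \lim_{t\to\infty}\downarrow H\big(P(t)\,|\,Q\big)$ exists and is nonnegative; the whole content is to show $H_\infty = 0$. Recall from Theorem \ref{deBruijn} and its proof (which carries over under \eqref{Z0}, via the localization sketched for Propositions \ref{DE_Traj}, \ref{DE}) that
\begin{equation*}
H\big(P(0)\,|\,Q\big) = \int_0^\infty I(t)\,\ud t + H_\infty,\qquad I(t)={\cal E}\big({\bm\ell}_t,\log{\bm\ell}_t\big)\ge 0,
\end{equation*}
so in particular $\int_0^\infty I(t)\,\ud t<\infty$ and hence $\liminf_{t\to\infty} I(t)=0$; pick a sequence $t_n\uparrow\infty$ with $I(t_n)\to 0$.

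First I would record that $I(t_n)\to 0$ forces the likelihood ratios to flatten along the incidence graph: from \eqref{A48}, $I(t_n)=\sum_{(x,y)\in{\cal Z}}q(x)\,\widehat\kappa(x,y)\,\ell(t_n,y)\big[\log\frac{\ell(t_n,y)}{\ell(t_n,x)}-1\big]$, and each summand is a nonnegative multiple of $\Psi(\ell(t_n,y)/\ell(t_n,x))$ with $\Psi(r)=r\log r-r+1$. Since $\Psi$ vanishes only at $r=1$, for every edge $(x,y)\in{\cal Z}$ we get $\ell(t_n,y)/\ell(t_n,x)\to 1$; by irreducibility (connectedness of the incidence graph) this propagates to \emph{all} pairs, so $\ell(t_n,x)/\ell(t_n,x')\to 1$ for all $x,x'$. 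Combined with the normalization $\sum_x q(x)\,\ell(t_n,x)=1$, elementary estimates then give $\ell(t_n,x)\to 1$ on any fixed finite set of states, i.e.\ $p(t_n,x)\to q(x)$ pointwise.

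Next I would upgrade pointwise convergence of the marginals to convergence of the entropy along $t_n$. Write $H\big(P(t)\,|\,Q\big)=\sum_x q(x)\,\Psi(\ell(t,x))$ using $\Psi\ge 0$ as in \eqref{A52d}, \eqref{A35*}. Since $\Psi\ge 0$, Fatou's lemma gives $\liminf_n\sum_x q(x)\Psi(\ell(t_n,x))\ge\sum_x q(x)\Psi(1)=0$—which is trivial. The substance is an \emph{upper} bound: I would use that $t\mapsto H(P(t)|Q)$ is decreasing together with the bound $H(P(t)|Q)\le H(P(0)|Q)<\infty$ to get a uniform integrability / tightness handle on the tails of the sum $\sum_x q(x)\Psi(\ell(t,x))$, so that the pointwise convergence $\Psi(\ell(t_n,x))\to 0$ actually yields $\sum_x q(x)\Psi(\ell(t_n,x))\to 0$. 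Concretely, for any finite $F\subset{\cal S}$, split the sum over $F$ and its complement; the part over $F$ tends to $0$ by pointwise convergence and finiteness of $F$, while the part over $F^c$ is controlled because $\sum_{x\in F^c}q(x)\Psi(\ell(t_n,x))\le\sum_{x\in F^c}p(t_n,x)\log^+(p(t_n,x)/q(x))+\big(\sum_{x\in F^c}q(x)\big)$-type terms, and these tails are small uniformly in $n$ by monotonicity in $t$ (entropy of $P(t)$ never exceeds that of $P(0)$) plus an elementary "entropy on a shrinking set is small'' lemma. Hence $H(P(t_n)|Q)\to 0$, and since the full limit $H_\infty$ exists it must equal this subsequential limit, so $H_\infty=0$, which is \eqref{A30'}.

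The main obstacle is precisely the last step: transferring pointwise convergence $p(t_n,x)\to q(x)$ to convergence of the relative entropy, since on a countable state-space mass can a priori escape to infinity. The decreasing-ness of $t\mapsto H(P(t)|Q)$ together with the finiteness assumption \eqref{Z0} is exactly the tool that rules this out—one shows the "tail entropy'' $\sum_{x\notin F}q(x)\Psi(\ell(t,x))$ is small, uniformly in large $t$, by choosing $F$ so that $\sum_{x\notin F}p(t,x)$ and the associated log-terms are controlled via $H(P(t)|Q)\le H(P(0)|Q)$ and a Csisz\'ar–Kullback/Pinsker-type bound. I would isolate that tail estimate as the one nontrivial lemma; everything else is a routine repetition of the finite-state computations from Section \ref{sec6}.
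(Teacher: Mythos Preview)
Your proposal has a real gap at the step you yourself flag as ``the one nontrivial lemma'': the tail estimate does not follow from the ingredients you invoke. Writing $H(P(t)\,|\,Q)=\sum_{x}q(x)\,\Psi(\ell(t,x))$ with $\Psi\ge 0$, the bound $H(P(t)\,|\,Q)\le H(P(0)\,|\,Q)$ says only that the nonnegative measures $\mu_t(x):=q(x)\,\Psi(\ell(t,x))$ on $\cS$ have uniformly bounded total mass; it gives no tightness. Pointwise convergence $\ell(t_n,x)\to 1$ (which, incidentally, follows directly from (\ref{A20a}) without the detour through $I(t_n)\to 0$) yields $\mu_{t_n}(F)\to 0$ for every fixed finite $F\subset\cS$, whence $\mu_{t_n}(F^c)\to H_\infty$ for every such $F$: if $H_\infty>0$ the entropy mass simply escapes to infinity, and nothing in your outline prevents this. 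Monotonicity of the \emph{total} entropy controls no tail, and \textsc{Pinsker} bounds total variation \emph{by} entropy, which is the wrong direction. By de~la~Vall\'ee--Poussin, the bound $\sup_t\mathbb{E}^{\mathbb{Q}}\big[\Psi(L(t))\big]<\infty$ gives uniform integrability of $\{L(t)\}$, not of $\{\Psi(L(t))\}$---and the latter is what you need.

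The paper obtains the missing uniform integrability from the (sub)martingale structure rather than from entropy bounds. Starting from Proposition~\ref{FJ} and letting $T\to\infty$ via \textsc{L\'evy}'s martingale convergence theorem, one gets that $\big(L(t),\,\cH(t)\big)_{t\ge 0}$ with $\cH(t):=\sigma\big(X(\theta):\theta\ge t\big)$ is a \emph{backward} $\mathbb{Q}$-martingale. Ergodicity makes the tail $\sigma$-algebra $\cH(\infty)$ trivial (\textsc{Blackwell--Freedman}), so the backward martingale convergence theorem gives $L(t)\to 1$ a.e.\ and in $\bL^1(\mathbb{Q})$. Then $\big(\Phi(L(t)),\,\cH(t)\big)_{t\ge 0}$ is a backward $\mathbb{Q}$-submartingale with expectation bounded below by $0$, and the backward \emph{sub}martingale convergence theorem (\textsc{Chung}, Theorem~9.4.7) delivers uniform integrability of $\{\Phi(L(t)):t\ge 0\}$ automatically; convergence of expectations then yields (\ref{A30'}). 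This is the idea your argument is missing.

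A smaller point: your appeal to the \textsc{de~Bruijn} identity ``carrying over via the localization sketched for Propositions~\ref{DE_Traj}, \ref{DE}'' skips a genuine argument (one must show $H(P(T-\sigma_n)\,|\,Q)\to H(P(0)\,|\,Q)$ along the localizing stopping times, using \textsc{Fatou} and $\Phi\ge -e^{-1}$); in the paper this is established \emph{after} the present Proposition. You could still extract $\int_0^\infty I(t)\,\ud t\le H(P(0)\,|\,Q)<\infty$ from the localized inequality alone, so there is no circularity, but the step deserves to be made explicit.
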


\noindent
{\it Proof:} \, Let us recall  the likelihood ratio process $\, L (t) := \ell \big(t, X(t)\big),$ $ 0 \le t < \infty\,$ of (\ref{A34}), and  from (\ref{A40}) that its time-reversal $\, L (T-s)  , ~ 0 \le s \le  T\,$ is a $\, \big( \widehat{\mathbb{G}}, \mathbb{Q}\big)-$martingale.

\smallskip
Fix $\, 0 \le t_1 < t_2 < \infty\,$. For any $\, T \in (t_2 , \infty)\,$, this means $\, \mathbb{E^Q} \big[ L (T-s_1) \, \big| \, {\cal G} (T-s_2 )\big] = L (T-s_2)\,$ for $s_1 = T - t_1,$ $~s_2 = T - t_2,$ or equivalently: 
$$\,
\mathbb{E^Q} \big[ L (t_1) \, \big| \, \sigma \big( X (\theta) ,\,  \,   t_2  \le \theta \le T \big)  \big] \,= \,L (t_2)\,.\,
$$ 
But this last identity holds for any $\, T \in (t_2 , \infty)$, so it leads --- on the strength of the P.\,\textsc{L\'evy} martingale convergence Theorem 9.4.8 in \textsc{Chung} (1974) --- to 
\begin{equation} 
\label{Z6}
\mathbb{E^Q} \big[ L (t_1) \, \big| \, {\cal H } (t_2)  \big] = L (t_2)\,, \qquad ~~~{\cal H } (t) := \sigma \big( X (\theta) ,\, \,  t  \le \theta < \infty \big).
\end{equation} 
To wit, the likelihood ratio process $\, \big( L(t) \big)_{0 \le t < \infty }\,$ is a  martingale of the backwards filtration $\, \big( {\cal H}(t) \big)_{0 \le t < \infty }\,,$ whose ``tail" sigma-algebra is trivial on account of the ergodicity property (\ref{A20aa}) of the \textsc{Markov} Chain (\textsc{Blackwell  \& Freedman}  (1964)):
$$
{\cal H}(\infty) \,:=\, \bigcap_{0 \le t < \infty} {\cal H}(t) \,=\, \big\{ \emptyset, \Omega \big\}\,, \quad \text{mod.} ~ \mathbb{Q}\,.
$$

We invoke now the martingale version of the  backward submartingale convergence Theorem 9.4.7 in \textsc{Chung}  (1974). It follows from this result   that $\, \big( L(t) \big)_{0 \le t < \infty }\,$ is a $\,\mathbb{Q}-$uniformly integrable family; that the limit
$\,
L(\infty) :=  \lim_{t \to \infty} L(t) 
\,$
exists, both a.e.\,and in $\,\mathbb{L}^1$ under $\mathbb{Q}\,;$  and that the backward martingale property (\ref{Z6}) extends all the way to infinity,  namely 
\begin{equation} 
\label{Z9}
\mathbb{E^Q} \big[ L (t_1 ) \, \big| \, {\cal H } (\infty)  \big] = L (\infty)\,.
\end{equation} 
But the triviality under $\,\mathbb{Q} $ of the tail sigma-algebra, implies that $\,L(\infty) \,$ is $\,\mathbb{Q}-$a.e.\,constant. Then the extended martingale property (\ref{Z9}) identifies this constant as 
$\,
L (\infty) = \mathbb{E^Q} \big[L (\infty)  \big]  = \mathbb{E^Q} \big[L (t_1)  \big] = 1\,.
 $

We recall   the relative entropy from (\ref{A35*}). The convexity of the function $\, \Phi (\ell) = \ell \, \log \ell\,$ shows, in conjunction with (\ref{Z6}) and the \textsc{Jensen} inequality, that 
\begin{equation} 
\label{Z11'}
\Big( \Phi \big( L (t) \big), {\cal H } (t) \Big)_{0 \le t < \infty} \qquad \text{is a backward} ~~ \mathbb{Q}-\text{submartingale,} 
\end{equation} 
with decreasing expectation $\, \mathbb{E^Q} \big[  \Phi \big( L( t )  \big]= H \big( P(t) \, \big| \, Q \big) \ge 0.\,$ Because this expectation is bounded from below, we can   appeal  once again  to the backward submartingale convergence  Theorem 9.4.7 in \textsc{Chung}  (1974). We deduce that the process in (\ref{Z11'}) is a $\,\mathbb{Q}-$uniformly integrable family which converges, again both a.e.\,and in $\,\mathbb{L}^1$ under $\mathbb{Q}\,,$ to 
$\,
\lim_{t \to \infty} \Phi \big( L(t) \big)  =\Phi \big( L (\infty) \big) = \Phi(1) = 0\,.
 $

 Furthermore, the aforementioned uniform integrability 
gives
 $$
\lim_{t \to \infty} \downarrow H \big(P(t) \, \big| \,Q\big) \,=\, 
\lim_{t \to \infty}  \, \mathbb{E^Q} \big[  \Phi \big( L( t ) \big)  \big]\,=\, \mathbb{E^Q} \Big( \lim_{t \to \infty}  \Phi \big( L( t ) \big)   \Big) \,=\, 0\,;
$$
that is, (\ref{A30'}) is also valid  in this general case with countable state-space.   \qed

\subsubsection{Relative Entropy is Continuous  at the Origin}
\label{sec62}

We discuss now the validity of  the \textsc{de Bruijn}  identities of (\ref{A46})  when the state-space is countable.

\begin{prop}
The \textsc{de Bruijn}  identities of (\ref{A46})  for the dissipation of relative entropy are valid  for a countable state-space,  under the finite entropy condition (\ref{Z0}).
\end{prop}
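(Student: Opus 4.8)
The plan is to establish the de~Bruijn identity \eqref{A46} for countable $\cS$ by localizing the Doob--Meyer decomposition \eqref{A50a}, in the same spirit as the argument given for Propositions~\ref{DE_Traj} and~\ref{DE} in the countable setting. First I would recall that, under the standing assumption \eqref{Z0}, Proposition~\ref{to zero} already gives $\lim_{t\to\infty}\downarrow H\big(P(t)\mid Q\big)=0$, so the second equality in \eqref{A46} will follow from the first by letting $T\to\infty$ once the first is proved. Thus the real task is to show
\[
H\big(P(T)\mid Q\big)\,=\,H\big(P(0)\mid Q\big)-\int_0^T {\cal E}\big({\bm\ell}_t,\log{\bm\ell}_t\big)\,\ud t\,,\qquad 0\le T<\infty\,.
\]
I would fix $T\in(0,\infty)$ and revisit the function $h(s,x)=\Phi\big(\ell(T-s,x)\big)$ with $\Phi(\ell)=\ell\log\ell$, together with the process $\widehat M^{\,h}$ of \eqref{A45}. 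In the countable case one can only assert {\it prima facie} that $\widehat M^{\,h}$ is a $(\widehat{\mathbb G},\mathbb Q)$-local martingale (Proposition~\ref{Liggett_Back}); the boundedness argument used for finite $\cS$ is no longer available because $h$ and the integrand $\big(\partial h+\widehat{\cK}h\big)=\Lambda^{\Phi,\mathbb Q}\ge 0$ of \eqref{A47} need not be bounded.

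The key steps, in order, are as follows. (i) Localize: choose an increasing sequence $\{\sigma_n\}_{n\in\N}$ of $\widehat{\mathbb G}$-stopping times taking values in $[0,T]$ with $\lim_n\uparrow\sigma_n=T$, reducing $\widehat M^{\,h}$ to bounded $(\widehat{\mathbb G},\mathbb Q)$-martingales $\widehat M^{\,h}(s\wedge\sigma_n)$. (ii) Evaluate \eqref{A50a} at $s=\sigma_n$, take $\mathbb Q$-expectations, use optional sampling to kill the martingale term, and then let $n\to\infty$: on the left side, $\mathbb E^{\mathbb Q}\big[h(\sigma_n,\widehat X(\sigma_n))\big]=\mathbb E^{\mathbb Q}\big[\Phi\big(\ell(T-\sigma_n,\widehat X(\sigma_n))\big)\big]$ converges to $\mathbb E^{\mathbb Q}\big[\Phi\big(\ell(0,X(0))\big)\big]=H\big(P(0)\mid Q\big)$ along the $\mathbb Q$-submartingale of \eqref{A40a}, justified by the backward-submartingale convergence already invoked in the proof of Proposition~\ref{to zero}; on the integral side, since $\Lambda^{\Phi,\mathbb Q}\ge 0$, monotone convergence gives $\int_0^{\sigma_n}\Lambda^{\Phi,\mathbb Q}\,\ud u\uparrow\int_0^T\Lambda^{\Phi,\mathbb Q}\,\ud u$, and aggregating (taking $\mathbb Q$-expectation, using Tonelli and the already-established identity \eqref{A48}) yields $\int_0^T{\cal E}\big({\bm\ell}_t,\log{\bm\ell}_t\big)\,\ud t$. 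This produces the inequality
\[
H\big(P(T)\mid Q\big)+\int_0^T{\cal E}\big({\bm\ell}_t,\log{\bm\ell}_t\big)\,\ud t\,\le\, H\big(P(0)\mid Q\big)\,.
\]
(iii) Establish the reverse inequality by Fatou's lemma applied to $h(s\wedge\sigma_n,\widehat X(s\wedge\sigma_n))$ as $n\to\infty$, exactly as in the variance argument of Section~\ref{sec9}, together with the boundedness-below of $H(P(t)\mid Q)\ge 0$. Combining (ii) and (iii) gives the de~Bruijn identity and, as a by-product, that $\widehat M^{\,h}$ is a genuine $(\widehat{\mathbb G},\mathbb Q)$-martingale.

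The main obstacle I anticipate is controlling the left-endpoint term $\mathbb E^{\mathbb Q}\big[\Phi\big(\ell(T-\sigma_n,\widehat X(\sigma_n))\big)\big]$ uniformly as $n\to\infty$: one needs genuine (uniform) integrability, not merely pointwise convergence, to pass to the limit and recover $H\big(P(0)\mid Q\big)$ rather than something smaller. This is where condition \eqref{Z0} is indispensable. The cleanest route is to note that $\big(\Phi(L(t)),\cH(t)\big)_{0\le t<\infty}$ is a backward $\mathbb Q$-submartingale with bounded (decreasing, nonnegative) expectation --- precisely the object \eqref{Z11'} from Proposition~\ref{to zero} --- so that the family $\{\Phi(L(t))\}$ is $\mathbb Q$-uniformly integrable; since each $\ell(T-\sigma_n,\widehat X(\sigma_n))$ is of the form $L(\tau_n)$ for the $\cH$-measurable time $\tau_n=T-\sigma_n$, the value $\Phi\big(\ell(T-\sigma_n,\widehat X(\sigma_n))\big)$ lies in a uniformly integrable family, and the monotone-in-$n$ convergence $T-\sigma_n\downarrow 0$ delivers the limit $\mathbb E^{\mathbb Q}\big[\Phi(L(0))\big]=H(P(0)\mid Q)$. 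With this in hand, the remaining steps are routine applications of optional sampling, monotone convergence, Fatou, and the already-proven pointwise identity \eqref{A48}.
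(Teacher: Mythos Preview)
Your overall strategy---localize the Doob--Meyer decomposition \eqref{A50a}, pass to the limit using monotone convergence on the nonnegative integrand, and control the boundary term $\mathbb E^{\mathbb Q}\big[\Phi\big(\ell(T-\sigma_n,\widehat X(\sigma_n))\big)\big]$---is exactly the paper's route. But your treatment of the boundary term is both over-engineered and slightly wrong in the details.

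The uniform-integrability detour is unnecessary, and the specific argument you give has a gap: uniform integrability of the \emph{deterministic-time} family $\{\Phi(L(t)):t\ge 0\}$ (which is what the backward-submartingale theorem of Proposition~\ref{to zero} delivers) does \emph{not} automatically transfer to the \emph{random-time} family $\{\Phi(L(T-\sigma_n)):n\in\N\}$; the random variables $\Phi(L(\tau_n))$ are not members of the former family. Also, ``the backward-submartingale convergence already invoked in the proof of Proposition~\ref{to zero}'' concerns the limit $t\to\infty$ along $\cH(t)$, not the limit $\sigma_n\uparrow T$ along $\widehat{\mathbb G}$, so that citation does not do the work you want.

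The paper's argument for the boundary term is simpler and avoids UI entirely. One uses two elementary facts about the $\big(\widehat{\mathbb G},\mathbb Q\big)$-submartingale \eqref{A40a}: optional sampling at the bounded stopping times $\sigma_n\le T$ gives monotonicity in $n$ and the upper bound $\mathbb E^{\mathbb Q}\big[\Phi\big(\ell(T-\sigma_n,\widehat X(\sigma_n))\big)\big]\le H\big(P(0)\,\big|\,Q\big)$; while Fatou's lemma, using that $\Phi(\ell)=\ell\log\ell\ge -1/e$ is \emph{bounded below} (this is the key, not ``$H(P(t)\mid Q)\ge 0$''), gives the matching lower bound. This is precisely what your steps~(ii)--(iii) are groping toward; once you justify them this way, the obstacle paragraph can be deleted.
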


 To justify this claim, we would like to   use the argument already deployed; but there is now no obvious, general way to turn the local martingale $\,\widehat{M}^{\, h}   $ of (\ref{A45}) into a true $\mathbb{Q}-$martingale.   Thus,  we localize   $\,\widehat{M}^{\, h}   $  by an increasing sequence $  \big\{ \sigma_n \big\}_{n \in \N} \,$ of $\,\widehat{\mathbb{G}}-$stopping-times with values in $[0,T]$ and $ \, \lim_{n \to \infty} \uparrow  \sigma_n = T.$   In  this manner we create the {\it bounded} $(\widehat{\mathbb{G}}, \mathbb{Q})-$martingales  $\,\widehat{M}^{\, h} (s \wedge \sigma_n)\,, ~ 0 \le s \le T,$  which then give 
\begin{equation} 
\label{Z01}
\mathbb{E^Q}
\int_0^{\sigma_n} \big( \partial h + \widehat{{\cal K}} h \big) \big(u, \widehat{X} (u) \big)\, \ud u\,=\, \mathbb{E^Q} \big[ h \big( \sigma_n, \widehat{X}(\sigma_n) \big) \big] - \,\mathbb{E^Q} \big[ h \big( 0, \widehat{X}(0) \big) \big]  ~~~~~~~~~~~~~~~~~~~~~~~~~~~~~~~~~~~~~~~~~~~~~
\end{equation} 
$$
~~~~~~~~~~~~=\,  H \big( P(T-\sigma_n) \, \big| \, Q \big) -  H \big( P(T) \, \big| \, Q \big) \, \le \, H \big( P(0 ) \, \big| \, Q \big) -  H \big( P(T) \, \big| \, Q \big) \, \le \, H \big( P(0 ) \, \big| \, Q \big) \, < \, \infty 
$$
 for every $\, n \in \N,$ on account of  (\ref{Z0}); see also the argument straddling (\ref{Z03}) below. In particular, the sequence of real numbers  in (\ref{Z01}) takes values in the compact   interval $\, [ - H  ( P(0 ) \,  | \, Q  ), H  ( P(0 ) \,  | \, Q ) ] .$

 We  would like  now to let $ n \to \infty$ in (\ref{Z01}), and establish  the \textsc{de Bruijn}  identity (\ref{A46}) in this case. The issue   once again is  continuity of the relative entropy   --- though now {\it at the origin} (rather than at infinity, as in (\ref{A30'})); and not along fixed times, but rather along an appropriate sequence of stopping times,  i.e.,
\begin{equation} 
\label{Z04}
\lim_{n \to \infty} \uparrow H \big( P(T- \sigma_n) \, \big| \, Q \big)\,=\, H \big( P(0) \, \big| \, Q \big)\,.
\end{equation} 
Accepting this for a moment, and letting $\, n \to \infty\,$ in (\ref{Z01}), we obtain the \textsc{de Bruijn}  identity (\ref{A46}), i.e., 
\begin{equation} 
\label{Z06}
\int_0^T I (t) \, \ud t\,=\,\mathbb{E^Q} \int_0^{T} \Big( \partial h + \widehat{{\cal K}} h \Big) \big(u, \widehat{X} (u) \big) \,  \ud u \,=\, H \big( P(0) \, \big| \, Q \big)  -H \big( P(T) \, \big| \, Q \big)  
\end{equation}
 by monotone convergence. We let now   $  T \to \infty\,$ in (\ref{Z06}) and arrive at the second identity in (\ref{A46}), thanks to the property (\ref{A30'})   already established in Proposition  \ref{to zero}.

\medskip
\noindent
{\it Proof of   (\ref{Z04}):}  By analogy with (\ref{A40d}), and invoking now additionally the optional sampling theorem for the bounded stopping times $  \big\{ \sigma_n \big\}_{n \in \N} \,$ of $\,\widehat{\mathbb{G}}\,$  with values in $[0,T]$, we deduce that the sequence of non-negative real numbers
\begin{equation} 
\label{Z03}
H \big( P (T-\sigma_n) \,| \,Q)\,=\, \mathbb{E^Q} \big[ \Phi \big( \ell \big( T-\sigma_n, \widehat{X} (\sigma_n) \big) \big) \big]\,,~~~~~~ ~n \in \N
\end{equation} 
is increasing; in particular,   
$ 
 \,\lim_{n \to \infty} H \big( P(T-\sigma_n) \, \big| \, Q \big)   \le     H \big( P(0) \, \big| \, Q \big) .  
 $ 
   On the other hand, the boundedness-from-below of the function $\,\Phi (\ell) = \ell \,\log \ell\, $ gives   
$$
  \lim_{n \to \infty}  H \big( P(T-\sigma_n) \, \big| \, Q \big) 
 \, \ge \,  \mathbb{E^Q} \Big[  \lim_{n \to \infty} \, \Phi
 \big( \ell \big( T-\sigma_n, \widehat{X} (\sigma_n) \big) \big) \Big]\,=\, \mathbb{E^Q} \big[ \Phi \big( \ell \big( 0, X (0) \big) \big) \big]\,=\,  H \big( P(0) \, \big| \, Q \big) 
$$
  with the help of \textsc{Fatou}'s Lemma,  and (\ref{Z04}) follows.  \qed

  \begin{rem} {\it The General Case:} 
  Exacly the same methods show that the results of Propositions \ref{Prop_8.1} and \ref{Prop_Gen_de_Br}, pertaining  to a general  convex function $\, \Phi : (0, \infty) \to \R \,$ with the properties imposed there, continue to hold for the generalized relative entropy functional  of (\ref{Z13}) in the case of a countable state-space ${\cal S},$  under the condition $\,   H^\Phi \big( P(0)     |  Q \big) <  \infty\,.$ 
  
  \smallskip
{\it  Once again, it is important to stress that  nowhere in the present Section   have we invoked the detailed-balance  conditions of    (\ref{A24}). }
  \end{rem}

\section*{Bibliography}

\noindent
 \textsc{Ambrosio, L., Gigli, N. \& Savar\'e, G.}  (2008)    {\it  Gradient Flows in Metric Spaces and in the Space of Probability Measures.} Second Edition.   {\sl Lectures in Mathematics, ETH Z\"urich.} Birh\"auser Verlag, Basel.

\medskip 
\noindent    \textsc{Blackwell, D. \& Freedman, D.}  (1964) The tail $\sigma-$field of a Markov chain and a theorem of Orey. {\it Annals of Mathematical Statistics} {\bf 35}, 1291-1295. 

\medskip
\noindent    
\textsc{Bobkov, S.G. \& Tetali, P.}  (2006) Modified logarithmic Sobolev inequalities in discrete settings. {\it Journal of Theoretical Probability} {\bf 19},  289-336. 

\medskip
\noindent    \textsc{Caputo, P., Dai\,Pra, P. \& Posta, G.}  (2009) Convex entropy decay via the Bochner-Bakry-\'Emery approach. {\it Annales de l'  Institut Henri Poincar\'e (S\'er. B, Probabilit\'es et  Statistiques)} {\bf 45},  734-753.

\medskip
\noindent    \textsc{Chafa\"i, D.}  (2004) Entropies, convexity and functional inequalities. {\it Journal of Mathematics  Ky\^oto University} {\bf 42},  325-363.

\medskip
\noindent    \textsc{Chung, K.L.}  (1974)\,
 {\it A Course in Probability Theory.} Second Edition, Academic Press, New York.

\medskip
\noindent
 \textsc{Conforti, G.}  (2020) A probabilistic approach to convex ($\phi$)-entropy decay for Markov chains. Preprint, available at \, {\it https://arxiv.org/abs/2004.10850}
 
 \medskip
\noindent
 \textsc{Cordero-Erausquin, D.} (2002) Some applications of mass transport to Gaussian-type inequalities. {\it Archive for Rational Mechanics and Analysis} {\bf  161}, 257-269.

  \medskip
\noindent 
 \textsc{Courant, R., Friedrichs, K. \& Lewy, H.} (1928) \"Uber die partiellen Differenzellengleichungen der mathematiscen Physik. {\it Mathematische Annalen} {\bf 100}, 32-74. 
 
 \medskip
\noindent 
 \textsc{Cover, T.M. \& Thomas, J.A.} (1991) {\it Elements of  Information Theory.} J. Wiley \& Sons, New York.

\medskip
\noindent 
 \textsc{Diaconis, P. \& Saloff-Coste, L.} (1996)  Logarithmic Sobolev inequalities for finite Markov chains. {\it  Annals of Applied Probability} {\bf 6}, 695-750.
 
 \medskip
\noindent 
 \textsc{Dietert, H.} (1996)  Characterization of gradient flows for finite-state Markov chains. {\it Electronic Communications in Probability} {\bf 20}, no. 29, 1-8.
 
 \medskip
\noindent 
 \textsc{Erbar, M. \& Maas, J.} (2012)  Ricci curvature of finite Markov Chains via convexity of entropy. {\it  Archive for Rational Mechanics and Analysis} {\bf 206}, 997-1038.

 \medskip
\noindent 
 \textsc{Erbar, M. \& Maas, J.} (2014)  Gradient flow structures for discrete porous medium equations. {\it  Discrete and Continuous Dynamical Systems} {\bf 34}, 1355-1374.

\medskip
\noindent 
\textsc{Fontbona, J. \&    Jourdain, B.} (2016) A trajectorial interpretation of the dissipations of entropy and Fisher information for stochastic differential equations. {\it  Annals of Probability} {\bf 44}, 131-170.

\medskip
\noindent 
\textsc{Jordan,  R., Kinderlehrer, D. \& Otto, F.} (1998) The variational formulation of the Fokker-Planck equation. {\it SIAM Journal of Mathematical Analysis} {\bf 29}, 1-17.

\medskip
\noindent
 \textsc{Karatzas, I., Schachermayer, W. \& Tschiderer, B.}  (2019) Trajectorial  Otto   Calculus. Preprint (63 pages), preliminary version of [KST\,20]. arxiv:1811.08686.
 
 \medskip
\noindent
 \textsc{Karatzas, I., Schachermayer, W. \& Tschiderer, B.}  (2020) 
 {A trajectorial approach to the gradient flow properties of Langevin-Smoluchowski diffusions}. Condensed version of [KST\,19] (37 pages). Submitted,  	arxiv:2008.09220.

 \medskip
 \noindent    \textsc{Karatzas, I. \& Shreve, S.E.}  (1988)  {\it Brownian Motion and Stochastic Calculus.}  Volume 113 of   series  {\sl Graduate Texts in Mathematics.}   Springer-Verlag, New York.

\medskip
\noindent
 \textsc{Liggett, T.G.}  (2010)    {\it  Continuous Time Markov Processes: An Introduction.} Volume 113 of the series  {\sl Graduate Studies in Mathematics.} American Mathematical Society, Providence, RI.
 
  \medskip
\noindent 
 \textsc{Maas, J.} (2011)  Gradient flows of the  entropy for finite Markov chains. {\it  Journal of Functional Analysis} {\bf 261}, 2250-2292.
 
   \medskip
\noindent 
 \textsc{Maas, J.} (2017)  Entropic Ricci curvature for discrete spaces. {\it  Lecture Notes in Mathematics}\, {\bf 2184}, 159-173. Springer-Verlag, Berlin.
 
 \medskip
 \noindent  
 \textsc{Miclo, L.} (1992) Recuit simul{\'e} sans potentiel sur un ensemble fini. 
 S{\'e}minaire de Probabilit{\'e}s XXVI,
 {\it  Lecture Notes in Mathematics}\, {\bf 1526}, 47–60. Springer-Verlag, Berlin

   \medskip
\noindent 
 \textsc{Mielke, A.} (2011) A gradient structure for reaction-diffusion systems and for energy-drift-diffusion systems. {\it  Nonlinearity} {\bf 24}, 1329-1346.
 
  \medskip
\noindent 
 \textsc{Mielke, A.} (2013) Geodesic convexity of the relative entropy in reversible  Markov  chains. {\it  Calculus of Variations and Partial Differential Equations} {\bf 48}, 1-31.
 
\medskip
\noindent 
 \textsc{Mielke, A.} (2016) On evolutionary $\Gamma-$convergence for gradient systems. In {\it Macroscopic and Large Scale Phenomena: Coarse Graining, Mean-Field Limits, and Ergodicity}. {\sl Lecture  Notes in Applied  Mathematics and  Mechanics} {\bf 3}, 187-249. Springer-Verlag, New York.

 \medskip
\noindent
 \textsc{Montenegro, R. \&   Tetali, P.} (2006)  Mathematical aspects of mixing times in Markov chains.  {\it  Foundations and Trends  in Theoretical Computer Science} {\bf 1}\,(3),  237-354.

\medskip
\noindent 
\textsc{Norris, J.}  (1997)\,  {\it  Markov Chains.} Cambridge University Press.

 \medskip
\noindent
 \textsc{Otto, F.} (2001)  The geometry of dissipative evolution equations: the porous medium equation.  {\it  Communications in Partial Differential Equations} {\bf 26},  101-174.
 
  \medskip
\noindent
 \textsc{Otto, F. \& Villani, C.} (2000) 
 Generalization of an inequality by Talagrand, and links with the logarithmic Sobolev inequality. 
 {\it Journal of  Functional Analysis} {\bf 173}, 361-400.

 \medskip
 \noindent
  \textsc{Pavon, M.} (1989) Stochastic control and nonequilibrium thermodynamical systems. {\it Applied Mathematics \& Optimization} {\bf 19}, 187-202. 
 
\medskip
\noindent 
\textsc{Rogers, L.C.G. \& Williams, D.}  (1987)\,  {\it  Diffusions, Markov Processes and Martingales. Vol.\,II: It\^o Calculus.} J.\,Wiley \& Sons, New York.

\medskip
\noindent 
\textsc{Stam, A.J.} (1959) Some inequalities satisfied by the quantities of information of Fisher and Shannon. {\it Information and Control} {\bf 2}, 101-112.

 \end{document}